 \documentclass[11pt]{article}
 
\usepackage{amssymb,amsmath,amsfonts}
\usepackage{graphicx,color,enumitem}
\usepackage{mathrsfs}
\usepackage{amsthm} 
\usepackage[dvipsnames]{xcolor}
\usepackage{bm}
\usepackage[round]{natbib}
\usepackage[]{appendix}

\usepackage{geometry}

\usepackage[colorinlistoftodos, textwidth=4cm, shadow]{todonotes}
\RequirePackage[colorlinks,citecolor=blue,urlcolor=blue]{hyperref}

\usepackage{tikz}
\tikzstyle{vertex}=[circle, draw, inner sep=2pt, fill=white]


\newcommand{\s}{{\mathcal S}}
\newcommand{\e}{{\varepsilon}}

\newcommand{\E}{{\mathbb E}}

\renewcommand{\P}{{\mathbb P}}

\newcommand{\R}{{\mathbb R}}

\newcommand{\N}{{\mathbb N}}

\newcommand{\Acal}{{\mathcal A}}

\newcommand{\Fcal}{{\mathcal F}}
\newcommand{\Gcal}{{\mathcal G}}

\newcommand{\Scal}{{\mathcal S}}

\newcommand{\Xcal}{{\mathcal X}}

\newcommand{\tg}{{g}}

\newcommand{\cC}{{\alpha}}

\newcommand{\fdot}{{\,\cdot\,}}

\newcommand{\id}{{\mathrm{id}}}

\DeclareMathOperator{\tr}{Tr}

\DeclareMathOperator{\supp}{supp}

\newtheorem{theorem}{Theorem}


\newtheorem{corollary}[theorem]{Corollary}

\newtheorem{definition}[theorem]{Definition}
\newtheorem{remark}[theorem]{Remark}
\newtheorem{example}[theorem]{Example}

\newtheorem{lemma}[theorem]{Lemma}

\newtheorem{proposition}[theorem]{Proposition}

\numberwithin{equation}{section}
\numberwithin{theorem}{section}

\begin{document}

\title{Probability measure-valued polynomial diffusions}
\author{Christa Cuchiero\thanks{Faculty of Mathematics, Vienna University, Oskar-Morgenstern-Platz 1, A-1090 Wien, Austria, christa.cuchiero@univie.ac.at}
\and Martin Larsson\thanks{Department of Mathematics, ETH Zurich, R\"amistrasse 101, CH-8092, Zurich, Switzerland, martin.larsson@math.ethz.ch.}
\and Sara Svaluto-Ferro\thanks{Faculty of Mathematics, Vienna University, Oskar-Morgenstern-Platz 1, A-1090 Wien, Austria, sara.svaluto-ferro@univie.ac.at.\newline
Christa Cuchiero and Sara Svaluto-Ferro gratefully acknowledge financial support by the Vienna Science and Technology Fund
(WWTF) under grant MA16-021.
 Martin Larsson and Sara Svaluto-Ferro gratefully acknowledge financial
support by the Swiss National Science Foundation (SNF) under grant 205121$\_$163425.
}}

\maketitle

\begin{abstract}

We introduce a class of probability measure-valued diffusions, coined \emph{polynomial}, of which the well-known Fleming--Viot process is a particular example. The defining property of finite dimensional polynomial processes considered by \cite{CKT:12, FL:16} is transferred to this infinite dimensional setting. This leads to a representation of conditional marginal moments via a finite dimensional linear PDE, whose spatial dimension corresponds to the degree of the  moment. As a result, the tractability of finite dimensional polynomial processes are preserved in this setting.  We also obtain a representation of the corresponding extended generators, and prove well-posedness of the associated martingale problems. In particular, uniqueness is obtained from the duality relationship with the PDEs mentioned above. 

\end{abstract}
\noindent\textbf{Keywords:} 
probability measure-valued processes, polynomial processes, Fleming--Viot type processes, interacting particle systems, martingale problem, maximum principle, dual process \\
\noindent \textbf{MSC (2010) Classification:} 60J68, 60G57

\tableofcontents

\section{Introduction}

In this paper we develop probability measure-valued versions of a class of processes known as polynomial diffusions, which have -- due to their inherent tractability -- broad applications in population genetics, interacting particle systems, and finance; see e.g.~\cite{E:11, V:90, FK:05}. The result is a class of stochastic processes that model randomly evolving probability measures, including examples such as Fleming--Viot processes \citep{FV:79, EK:93}, as well as conditional laws of jump-diffusions on (subsets of) $\R^d$.

Finite dimensional polynomial diffusions form a rich class that includes Kimura diffusions \citep{K:64}, Wishart correlation matrices \citep{AA:13}, and affine processes \citep{DFS:03}, just to name a few subclasses. See e.g.\ \cite{CKT:12, FL:16, FL:17, CLS:17} for further details and examples. This suggests transferring their defining property and tractability features to infinite dimensional processes. Such  processes also appear as limits of empirically  well-suited finite-dimensional polynomial models, whose limiting behavior is of key interest in population dynamics, but also in other areas, such as capital distribution curve modeling; see e.g.\ \cite{S:13}.

The infinite dimensional setup that we consider here are polynomial diffusions $X$  taking values in the space of probability measures on a locally compact Polish space $E$. We define them as path-continuous solutions of martingale problems for certain operators $L$ acting on classes of cylinder polynomials, i.e.~functions $p$ of the form
\[
\textstyle p(\nu)=\phi\left(\int_E g_1(x)\nu(dx),\ldots,\int_E g_m(x)\nu(dx)\right),
\]
where $\phi$ is a polynomial in $m$ variables, $g_1,\ldots,g_m$ are continuous and bounded, and the argument $\nu$ is a probability measure. Any such function $p$ can be regarded as a homogeneous polynomial in the probability measure $\nu$, and admits a natural notion of degree as discussed in Section~\ref{IIsec:2}. The defining property of a probability measure-valued polynomial diffusion is that $Lp$ is again a homogeneous polynomial with the same degree as $p$ (or is the zero polynomial). The precise definitions are actually somewhat more general; the details are in Section~\ref{sec_pol_op}.

A consequence is that moments admit tractable representations. Specifically, for a probability measure-valued polynomial diffusion $X$ starting at $X_0=\nu$, we establish in Section~\ref{sec_moments_uniqueness} (under suitable conditions) the moment formula
\begin{equation} \label{intro_ml_2}
\E\Big[\int_{E^k} g(x_1,\ldots,x_k) X_t(dx_1)\cdots X_t(dx_k)\Big] = \int_{E^k} u(t,x_1,\ldots,x_k) \nu(dx_1)\cdots \nu(dx_k),
\end{equation}
where $u(t,x_1,\ldots,x_k)$ solves the linear partial integro-differential equation (PIDE)
\begin{equation}\label{PIDEintro}
\frac{\partial u}{\partial t} =  \overline{L}_k u \quad\text{in $(0,\infty)\times E^k$}
\end{equation}
with initial data $u(0,x_1,\ldots,x_k)=g(x_1,\ldots,x_k)$, and where $\overline L_k$ is a linear operator derived from the generator $L$ of $X$, acting on (a subspace of) $C(E^k)$. The $k$-dimensional PIDE \eqref{PIDEintro} is significantly simpler than the Kolmogorov equation, whose state space in this context consists of measures on $E$. Indeed, \eqref{PIDEintro} corresponds to the Feynman-Kac PIDE associated to an  $E^k$-valued Markov process. When $E$ consists of finitely many points, we recover the finite dimensional case where \eqref{PIDEintro} reduces to a linear ODE associated to a certain Markov chain with values in $E^k$, whose solution is computed by matrix exponentiation.  These PIDEs fall exactly in the setup considered by \cite{BBGJJ:18}, who develop numerical solution procedures based on neural networks. These methods do not suffer from the curse of dimensionality, and give the whole function $(x_1,\ldots,x_k) \mapsto u(t, x_1,\ldots,x_k)$.
 
The moment formula forms a particular instance of {\em duality}, which is often used to prove uniqueness for measure-valued martingale problems. This is the case also here, and we obtain uniqueness under broad circumstances. Being solutions of PIDEs, our dual ``processes'' are deterministic, in contrast to other commonly used duals such as the Kingman coalescent in the Fleming--Viot case; see e.g.\ \cite{DH:82}. Note that moment formulas for Fleming--Viot type process are classical (see e.g.\ \cite{DH:82} or \cite[Section 2.8]{D:93}); we show here that they are actually available much more broadly.

Existence of measure-valued processes is often proved via large population limits of carefully constructed particle systems; see e.g.\ \cite[Section~2]{D:93} for a semigroup approach, or \cite{EK:93, EK:87} for an approach via martingale problems. We also work with martingale problems, but rather than using approximations by finite particle systems, we obtain existence directly via the positive maximum principle. This relies on new optimality conditions for polynomials of measure arguments developed in Section~\ref{IIstool}. As a result, we can describe large parametric families of specifications. In particular, we obtain a full characterization of probability measure-valued polynomial diffusions whose generator $L$ has a sufficiently large domain. This yields extensions of the so-called \emph{Fleming--Viot process with weighted sampling} discussed in \cite[Section 5.7.8]{D:93}, where the sampling-replacement rate is allowed to depend on the type. On the other hand, by restricting the domain of $L$ we obtain a richer class, including e.g.\ the model of exchangeable diffusions considered by \cite{V:88}; see also \cite[Section 5.8.1]{D:93}. Our existence and well-posedness results are in Section~\ref{IIs40}.

Arguably, the bulk of applications of measure-valued processes come from population genetics. But tractable specifications like those developed here are for instance also of interest in non-parametric Bayesian statistics (see e.g.~\cite{RGN:02, RLP:03} who consider distributions of functionals of random probability measures), age distribution and longevity risk modeling (see e.g.~\cite{BHEA:18}), or high-dimensional financial modeling.
Let us sketch a situation from stochastic portfolio theory (see \cite{F:02, FK:09} for an introduction to this subject.) Let $Z$ be a process with values in the unit simplex $\Delta^d=\{z\in[0,1]^d\colon z_1+\ldots+z_d=1\}$, representing the capitalization weights of $d$ stocks. For tractability, it is natural select $Z$ to be a polynomial diffusion on $\Delta^d$ as in \cite{C:16}. To compute basic moment statistics of the capitalization weights, one uses a moment formula similar to \eqref{intro_ml_2}. For homogeneous polynomials $q(z_1,\ldots,z_d)$, it takes the form
\[
\E[q(Z_t)] = \sum_{\bm\alpha} u_t(\bm\alpha)\, z_1^{\alpha_1}\cdots z_d^{\alpha_d} ,
\]
where $Z_0=z\in\Delta^d$, and the sum extends over all multi-indices $\bm\alpha=(\alpha_1,\ldots,\alpha_d)$ with $|\bm\alpha|=\alpha_1+\cdots+\alpha_d=k:=\deg(q)$. There are $N:=\binom{k+d-1}{k}$ such multi-indices, and the $\R^N$-valued function $u_t=(u_t(\bm\alpha)\colon |\bm\alpha|=k)$ solves the linear ODE
\begin{equation} \label{intro_ml_1}
\frac{\partial u}{\partial t} = L_k u,
\end{equation}
whose initial condition is the coefficient vector of $q$, and where $L_k$ here is an $N\times N$ matrix derived from the generator of $Z$. For small or moderate dimensions $d$ and degrees $k$, solving \eqref{intro_ml_1} is feasible. However, $d$ is typically on the order of $10^3$, which renders \eqref{intro_ml_1} computationally taxing even for small $k$, since the ODE dimension is $N\sim d^k$.

Now, consider instead a \emph{linear factor model} $\widetilde Z=(\widetilde Z^1,\ldots,\widetilde Z^d)$ for the capitalization weights. This means that $\widetilde Z^i=\int_E g_i(x)X_t(dx)$ for some nonnegative functions $g_1,\ldots,g_d$ that sum to one, and a probability measure-valued polynomial diffusion $X$ with, say, $E=[0,1]$. In this case,
\[
\E[q(\widetilde Z_t)] = \E[p(X_t)]
\]
for some measure polynomial $p(\nu)$ of degree $k=\deg(q)$. This expectation can be computed using the moment formula \eqref{intro_ml_2}, which amounts to solving the PDE \eqref{PIDEintro} up to time $t$. Discretizing the space domain $E^k$ using $n$ points in each dimension yields a complexity of order $n^k$. This can be made orders of magnitude smaller than the complexity $d^k$ of solving \eqref{intro_ml_1}. Importantly, $n$ is a parameter that is chosen based on accuracy requirements, while $d$ is an input to the problem. This illustrates how probability measure-valued polynomial diffusions can enhance tractability in high-dimensional models. On top of this, as projections of an infinite-dimensional process, these linear factor models constitute a much richer class than polynomial models on subsets of $\Delta^d$.

The remainder of the paper is organized as follows. After reviewing some basic notation and definitions in the following subsection, we turn to polynomials of measure arguments in Section~\ref{IIsec:2}, and prove optimality conditions for such polynomials in Section~\ref{IIstool}. In Section~\ref{sec_pol_op} we define polynomial operators and study their form in the diffusion case. Section~\ref{sec_ex_un} contains the moment formula as well as our main results on well-posedness of the martingale problem. Applications and examples are treated in Section~\ref{IIsec:examples}. Some proofs and supplementary material are gathered in appendices.

\subsection{Notation and basic definitions}\label{IIs1}

Throughout this paper, $E$ is a locally compact Polish space endowed with its Borel $\sigma$-algebra.  The following notation is used.

\begin{itemize}
\item $M_+(E)$ denotes the finite measures on $E$, $M_1(E)\subset M_+(E)$ the probability measures, and $M(E)=M_+(E)-M_+(E)$ the signed measures of bounded variation (i.e., of the form $\nu_+-\nu_-$ with $\nu_+,\nu_-\in M_+(E)$). All three are topologized by weak convergence, which turns $M_+(E)$ and $M_1(E)$ into Polish spaces. For $\mu,\nu\in M(E)$ we write $\mu\leq\nu$ if $\nu-\mu\in M_+(E)$ and $|\nu|$ for $\nu_++\nu_-$.

\item $C(E)$, $C_b(E)$, $C_0(E)$, $C_c(E)$ have the usual meaning of continuous (and bounded, and vanishing at infinity, and compactly supported) real functions on $E$. The topology on the latter three is that of uniform convergence, and $\|\fdot\|$ denotes the supremum norm.

\item If $E$ is noncompact, then $E^\Delta=E\cup\{\Delta\}$ is the one-point compactification, itself a compact Polish space. If $E$ is compact we write $E^\Delta=E$, which mitigates the need to consider the compact and noncompact cases separately. We also define
$$C_\Delta(E^k):=\big\{f|_{E^k}\ :\ f\in C((E^\Delta)^k)\big\},$$
a closed subspace of $C_b(E^k)$. The spaces $C_\Delta(E)$ and $C(E^\Delta)$ can be identified, and we occasionally regard elements of the former as elements of the latter, and vice versa. When $E$ is compact, we have $C(E)=C_b(E)=C_0(E)=C_c(E)=C_\Delta(E)$ and we then simply write $C(E)$. Note that the constant function $1$ lies in $C_\Delta(E)$, but of course not in $C_0(E)$. This is one reason the spaces $C_\Delta(E^k)$ are useful; other reasons are discussed in Remarks~\ref{IIrem3} and~\ref{IIrem4}.

\item $\widehat C_\Delta (E^k)$ is the closed subspace of $C_\Delta (E^k)$ consisting of symmetric functions $f$, i.e., 
$f(x_1,\ldots,x_k)=f(x_{\sigma(1)},\ldots,x_{\sigma(k)})$ for all $\sigma\in \Sigma_k$, the permutation group on $k$ elements.
$\widehat C_0(E^k)$ and $\widehat C(E^k)$ are defined similarly.
 For any $g\in\widehat C_\Delta (E^k), h\in \widehat C_\Delta (E^\ell)$ we denote by
$g\otimes h\in\widehat C_\Delta(E^{k+\ell})$ the symmetric tensor product, given by
\begin{equation} \label{IIeqn17}
(g\otimes h) (x_1,\ldots,x_{k+\ell}) = \frac{1}{(k+\ell)!} \sum_{\sigma\in\Sigma_{k+\ell}} g\big(x_{\sigma(1)},\ldots,x_{\sigma(k)}\big)h\big(x_{\sigma(k+1)},\ldots,x_{\sigma(k+\ell)}\big).
\end{equation}
For a linear subspace $D\subseteq C_\Delta(E)$ we set $D\otimes D:=\text{span}\{g\otimes g\ :\ g\in D\}$. We emphasize that only symmetric tensor products are used in this paper.
\end{itemize}

Two key notions are the \emph{positive maximum principle} and \emph{conservativity} for certain linear operators. In general, for a Polish space $\Xcal$ and a subset $\s\subseteq \Xcal$, these notions are defined as follows.  An operator $\Acal\colon D\to C_b (\Xcal)$ with domain $D\subseteq C_b (\Xcal)$ is said to satisfy the positive maximum principle on $\s$ if 
$$\text{$f\in D$, $x\in \s$, $\sup_\s f=f(x)\geq0\quad$ implies $\quad\Acal f(x)\leq0$.}$$
If $\Scal$ locally compact, $\Acal$ is called $\Scal$-conservative if there exist functions $f_n\in D\cap C_0(\Scal)$ such that  $\lim_{n\to\infty}f_n=1$ on $E$ and $\lim_{n\to\infty}(\Acal f_n)^-=0$  on $E^\Delta$, both in the bounded pointwise sense; c.f.~Chapter 4.2 in \cite{EK:05}. For us, $\Scal$ will be $E$, $E^\Delta$, $M_1(E)$, or $M_1(E^\Delta)$.

It is well-known that the positive maximum principle, combined with conservativity, is essentially equivalent to the existence of a $\Scal$-valued solutions to the martingale problem for $\Acal$; see for instance Theorem 4.5.4 of \cite{EK:05}.  We use this extensively, and review the relevant results in Section~\ref{app_existence}. Here an important issue is that while $M_1(E)$ is compact when $E$ is compact, $M_1(E)$ is not even locally compact when $E$ is noncompact.

\section{Polynomials of measure arguments} \label{IIsec:2}

In this section we develop some basic properties of polynomials of measure arguments. The notation and results introduced here play a central role throughout this paper. Throughout this section $E$ is a locally compact Polish space.

\subsection{Monomials and polynomials}\label{IIs21}

A \emph{monomial} on $M(E)$ is an expression of the form 
\[
\langle g, \nu^k \rangle = \int_{E^k} g(x_1,\ldots,x_k) \nu(dx_1) \cdots \nu(dx_k)
\]
for some $k\in\N_0$, where $g\in \widehat C_\Delta (E^k)$ is referred to as the \emph{coefficient} of the monomial; see e.g.~\cite[Chapter 2]{D:93}. We identify $\widehat C_\Delta (E^0)$ with $\R$, so that for $k=0$ we have $\langle g,\nu^0\rangle=g\in\R$. It is clear that the map $\nu\mapsto\langle g,\nu^k\rangle$ is homogeneous of degree~$k$, and that $g\mapsto\langle g,\nu^k\rangle$ is linear. Furthermore, one has the identity $\langle g,\nu^k\rangle \langle h,\nu^\ell\rangle = \langle g\otimes h,\nu^{k+\ell}\rangle$, where the symmetric tensor product $g\otimes h$ is defined in \eqref{IIeqn17}.

A polynomial on $M(E)$ is now defined as a (finite) linear combination of monomials,
\begin{equation} \label{IIeq:p(nu)}
p(\nu) = \sum_{k=0}^m \langle g_k,\nu^k\rangle,
\end{equation}
with coefficients $g_k\in\widehat C_\Delta (E^k)$. The degree of the polynomial $p(\nu)$, denoted by $\deg(p)$, is the largest $k$ such that $g_k$ is not the zero function, and $-\infty$ if $p$ is the zero polynomial. The representation~\eqref{IIeq:p(nu)} is unique; see Corollary~\ref{IIC:pol uniqueness} below.

\begin{example}\label{IIex4}
Let $E=\{1,\ldots,d\}$ be a finite set. Then every element $\nu\in M(E)$ is of the form
\[
\nu = z_1\delta_1 + \cdots + z_d \delta_d, \qquad (z_1,\ldots,z_d)\in\R^d,
\]
where $\delta_i$ is the Dirac mass concentrated at $\{i\}$. Monomials take the form
\[
\langle g,\nu^k\rangle = \sum_{i_1,\ldots,i_k} g(i_1,\ldots,i_k)\, z_{i_1}\cdots z_{i_k},
\]
where the summation ranges over $E^k=\{1,\ldots,d\}^k$. Therefore, as $g(\cdot)$ ranges over all symmetric functions on $E^k$, we recover all homogeneous polynomials of total degree $k$ in the $d$ variables $z_1,\ldots,z_d$. In particular,  in view of Corollary \ref{IIrem5} later, this relation provides a one to one correspondence between polynomials on the unit simplex  $\Delta^d$, namely
$$\Delta^d:=\Big\{z\in\R^d\colon\sum_{i=1}^dz_i=1,\ z_i\geq0\Big\},$$
and polynomials on $M_1(E)$. 
\end{example}

The following function space will play an important role.

\begin{definition} \label{IID:Pol(M(E))}
Let
\[
P := \left\{\nu\mapsto p(\nu) \colon \text{$p$ is a polynomial on $M(E)$} \right\}
\]
denote the algebra of all polynomials on $M(E)$ regarded as real-valued maps, equipped with pointwise addition and multiplication.
\end{definition}

\subsection{Continuity and smoothness of polynomials}

Just like ordinary polynomials, the elements of $P$ are smooth. This is made precise in Lemma~\ref{IIL:Psmooth} below. In its statement, we use a directional derivative of functions on $M(E)$ that is well-known since the work of \cite{FV:79}.
A function $f\colon M(E)\to\R$ is called differentiable at $\nu$ in direction $\delta_x$ for $x\in E$ if
\[
\partial_x f(\nu) := \lim_{\varepsilon\to0} \frac{f(\nu+\varepsilon\delta_x)-f(\nu)}{\varepsilon}
\]
exists. We write $\partial p(\mu)$ for the map $x\mapsto\partial_xp(\mu)$, and use the notation
\[
\partial^k_{x_1x_2\cdots x_k} f(\nu) := \partial_{x_1}\partial_{x_2}\cdots \partial_{x_k} f(\nu)
\]
for iterated derivatives. We write $\partial^k p(\nu)$ for the corresponding map from $E^k$ to $\R$. Observe that for $p\in P$ of the form $p(\nu)=\langle g, \nu\rangle$ we get
$\partial_x p(\nu)=\lim_{\e\to0}(\int g(y) \e \delta_x(dy))\e^{-1}=g(x)$ for each $x\in E$.

The following lemma asserts basic properties of polynomials, in particular that 
polynomials on $M(E)$ can be uniquely extended to polynomials on $M(E^{\Delta})$, which will often be the object of interest for our purposes.

\begin{lemma}  \phantomsection \label{IIL:Psmooth}
\begin{enumerate}

\item\label{IIL:Psmooth:c1}  Each $p\in P$ is continuous on $M_+(E)$, sequentially continuous on $M(E)$, and can be uniquely extended to a polynomial on $M(E^\Delta)$.\footnote{It can be shown that sequential continuity cannot be strengthened to continuity.}
\item\label{IIL:Psmooth:1} Let $p\in P$ be a monomial of the form $p(\nu)=\langle g,\nu^k\rangle$. Then, for every $x\in E$ and $\nu\in M(E)$,
\[
\partial_x p(\nu) = k\langle  g(\fdot,x), \nu^{k-1}\rangle,
\]
where $g(\fdot,x) \in \widehat C_\Delta (E^{k-1})$ is the function $(x_1,\ldots,x_{k-1})\mapsto g(x_1,\ldots,x_{k-1},x)$. If $k=0$, the right-hand side should be read as zero.
\item\label{IIL:Psmooth:c2} For each $p\in P$ and $x\in E$ the map $\partial_x p\colon \nu\mapsto \partial_xp(\nu)$ lies in $P$.
\item\label{IIL:Psmooth:2} For each $p\in P$ and $\nu\in M(E)$, the map $\partial p(\nu)\colon x\mapsto \partial_xp(\nu)$ lies in $C_\Delta (E)$.
\item\label{IIL:Psmooth:3} The identity
\[
\partial_x(pq)(\nu)=p(\nu)\partial_xq(\nu)+q(\nu)\partial_xp(\nu)
\]
holds for all $p,q\in P$, $x\in E$, $\nu\in M(E)$.
\item\label{IIL:Psmooth:4} The Taylor representation
$$
p(\nu+\mu)=\sum_{\ell=0}^{k}\frac 1 {\ell!}\langle\partial^\ell p(\nu),\mu^\ell\rangle,
$$
holds for all $p\in P$ and $\nu,\mu\in M(E)$, where $k$ denotes the degree of $p$.

\end{enumerate}
\end{lemma}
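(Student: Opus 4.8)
The plan is to prove the six assertions essentially in the order stated, exploiting that each builds on the previous, and reducing everything to the case of monomials by linearity.

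\textbf{Part \eqref{IIL:Psmooth:1} (explicit first derivative of a monomial).} I would start here since it is the computational core. Fix $p(\nu)=\langle g,\nu^k\rangle$ with $g\in\widehat C_\Delta(E^k)$. For $\nu\in M(E)$, $x\in E$, and small $\varepsilon$, expand
\[
\langle g,(\nu+\varepsilon\delta_x)^k\rangle = \sum_{j=0}^k \binom{k}{j}\varepsilon^j \int_{E^{k-j}} g(x_1,\ldots,x_{k-j},\underbrace{x,\ldots,x}_{j}) \,\nu(dx_1)\cdots\nu(dx_{k-j}),
\]
using the symmetry of $g$ to collect the $\binom{k}{j}$ equal terms. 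Subtracting the $j=0$ term, dividing by $\varepsilon$, and letting $\varepsilon\to0$ leaves precisely the $j=1$ term, namely $k\langle g(\fdot,x),\nu^{k-1}\rangle$. Boundedness of $g$ on $E^k$ makes the limit interchange trivial. This simultaneously gives \eqref{IIL:Psmooth:c2}: since $g(\fdot,x)\in\widehat C_\Delta(E^{k-1})$, the map $\nu\mapsto\partial_x p(\nu)$ is again a monomial, hence in $P$; the general $p\in P$ follows by linearity.

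\textbf{Parts \eqref{IIL:Psmooth:c1} and \eqref{IIL:Psmooth:2} (continuity/extension, and regularity in $x$).} For \eqref{IIL:Psmooth:c1} I would reduce to a monomial $\langle g,\nu^k\rangle$. Sequential continuity on $M(E)$: if $\nu_n\to\nu$ weakly in $M(E)$ then $\nu_n^{\otimes k}\to\nu^{\otimes k}$ weakly on $E^k$ (standard, via products of bounded continuous functions being dense enough, or by an induction argument), and since $g\in C_\Delta(E^k)\subseteq C_b(E^k)$ the integrals converge. Continuity (not merely sequential) on $M_+(E)$ follows because weak convergence on $M_+(E)$ is metrizable. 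For the extension to $M(E^\Delta)$: each $g\in\widehat C_\Delta(E^k)$ is by definition the restriction of some $\widehat g\in\widehat C((E^\Delta)^k)$, and the formula $\langle\widehat g,\mu^k\rangle$ for $\mu\in M(E^\Delta)$ defines the extension; uniqueness of the extension follows from the uniqueness of the representation \eqref{IIeq:p(nu)} (Corollary~\ref{IIC:pol uniqueness}), once one checks the extended object is again a polynomial, which is immediate since $\widehat g\in\widehat C((E^\Delta)^k)=\widehat C_\Delta((E^\Delta)^k)$. For \eqref{IIL:Psmooth:2}, by \eqref{IIL:Psmooth:1} and linearity $\partial p(\nu)$ is a finite sum of functions $x\mapsto k\langle g_k(\fdot,x),\nu^{k-1}\rangle$; since $g_k\in\widehat C_\Delta(E^k)$ extends to $\widehat C((E^\Delta)^k)$ and $\nu^{k-1}$ has bounded total variation, dominated convergence shows $x\mapsto\langle g_k(\fdot,x),\nu^{k-1}\rangle$ extends continuously to $E^\Delta$, i.e.\ lies in $C_\Delta(E)$.

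\textbf{Parts \eqref{IIL:Psmooth:3} and \eqref{IIL:Psmooth:4} (product rule and Taylor formula).} The product rule \eqref{IIL:Psmooth:3} is either a direct consequence of the $\varepsilon$-expansion above applied to $(pq)(\nu+\varepsilon\delta_x)=p(\nu+\varepsilon\delta_x)q(\nu+\varepsilon\delta_x)$ — the $O(\varepsilon)$ term of a product of two functions with expansions $p(\nu)+\varepsilon\partial_xp(\nu)+o(\varepsilon)$ etc.\ — or, more cleanly, it suffices to verify it on monomials (both sides being bilinear in $p,q$) using \eqref{IIL:Psmooth:1} and the identity $\langle g,\nu^k\rangle\langle h,\nu^\ell\rangle=\langle g\otimes h,\nu^{k+\ell}\rangle$ together with the easily-checked fact that $(g\otimes h)(\fdot,x)=g(\fdot,x)\otimes h + g\otimes h(\fdot,x)$ up to the right combinatorial weights. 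For the Taylor formula \eqref{IIL:Psmooth:4}, again reduce to $p(\nu)=\langle g,\nu^k\rangle$; the binomial expansion $\langle g,(\nu+\mu)^k\rangle=\sum_{\ell=0}^k\binom{k}{\ell}\langle g,\nu^{k-\ell}\otimes\mu^{\ell}\rangle$ (using symmetry of $g$ to group terms) must be matched with $\sum_\ell \frac1{\ell!}\langle \partial^\ell p(\nu),\mu^\ell\rangle$. Iterating \eqref{IIL:Psmooth:1} gives $\partial^\ell p(\nu) = \frac{k!}{(k-\ell)!}\langle g(\fdot,x_1,\ldots,x_\ell),\nu^{k-\ell}\rangle$ as a function of $(x_1,\ldots,x_\ell)$, and pairing this against $\mu^\ell$ and dividing by $\ell!$ reproduces $\binom{k}{\ell}$ times the corresponding term, as desired. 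One should note $\partial^\ell p(\nu)=0$ for $\ell>k$, so the sum truncates at $k$.

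\textbf{Main obstacle.} The genuinely non-bookkeeping point is \eqref{IIL:Psmooth:c1}: establishing that $\nu_n\to\nu$ weakly in $M(E)$ forces $\nu_n^{\otimes k}\to\nu^{\otimes k}$ weakly on $E^k$, and in particular handling the noncompact case and the passage to $E^\Delta$ correctly (this is exactly why $C_\Delta(E^k)$ rather than $C_b(E^k)$ is the right space — testing against functions that "see" the point at infinity is what makes the extension well-defined and unique). Everything else is careful multilinear algebra with the tensor product \eqref{IIeqn17} and elementary $\varepsilon$-expansions.
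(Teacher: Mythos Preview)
Your proposal is correct and follows essentially the same route as the paper. The one ingredient you should make explicit in part \ref{IIL:Psmooth:c1} is that a weakly convergent sequence $\nu_n\to\nu$ in $M(E)$ has uniformly bounded total variation $\sup_n|\nu_n|(E)<\infty$ --- the paper obtains this via the Banach--Steinhaus theorem (viewing the $\nu_n$ as functionals on $C_0(E)$), and it is precisely this bound that lets your density-of-tensor-products approximation go through for $g\in\widehat C_\Delta(E^k)$.
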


\begin{proof}
\ref{IIL:Psmooth:c1}: For $h\in C_\Delta(E)^{\otimes k}$ we can write $h=\sum_{\ell=1}^L\lambda_\ell h_\ell^{\otimes k}$ for some $h_\ell\in C_\Delta(E)$ and $\lambda_\ell\in\R$. Since $\langle h_\ell,\nu\rangle$ is continuous by definition of weak convergence, 
$$\langle h,\nu^k\rangle=\sum_{\ell=1}^L\lambda_\ell\langle h_\ell^{\otimes k},\nu^k\rangle=\sum_{\ell=1}^L\lambda_\ell\langle h_\ell,\nu\rangle^k$$
 is continuous as well.
Note then that by linearity in \eqref{IIeq:p(nu)} it is enough to prove the result for $p(\nu)=\langle g,\nu^k\rangle$ and $g\in \widehat C_\Delta(E^k)$. Choose $h\in C_\Delta(E)^{\otimes k}$ such that $\|g-h\|\leq\e$ and  let $\nu_n\in M(E)$ 
form a convergent sequence with limit $\nu\in M(E)$. Observe that, by the Banach--Steinhaus theorem, $\sup_n|\nu_n|(E)<\infty$. Then
$$\big|\langle g,\nu_n^k\rangle-\langle g,\nu^k\rangle\big|\leq \big|\langle h,\nu_n^k\rangle-\langle h,\nu^k\rangle\big|+\e\big(\sup_n|\nu_n|(E)^k+|\nu|(E)^k\big)\to C \e$$
for some $C\geq0$. Since $\e$ is arbitrary, this proves sequential continuity of $p$ on $M(E)$. In particular we get continuity on $M_+(E)$ since this is a Polish space. The last part follows from the observation that every function in $C_\Delta(E)$ can be uniquely extended to a function in $C(E^\Delta)$.

\ref{IIL:Psmooth:1}: Using  the symmetry of $g$, a direct calculation yields
$$
p(\nu+\varepsilon\delta_x)-p(\nu) = \varepsilon k \int g(x_1,\ldots,x_{k-1},x) \prod_{j=1}^{k-1}\nu(dx_j) + o(\varepsilon).
$$
The expression for $\partial_xp(\nu)$ follows.

For the remaining part of the proof it suffices to consider monomials $p(\nu)=\langle g,\nu^k\rangle$ for $g\in \widehat C_\Delta(E^k)$ due to the linearity in \eqref{IIeq:p(nu)}.

\ref{IIL:Psmooth:c2}:
Fix $x\in E$ and note that $k g(\fdot,x)\in \widehat C_\Delta(E^{k-1})$. The claim follows by~\ref{IIL:Psmooth:1}.

\ref{IIL:Psmooth:2}:
For  $p(\nu)=\langle g,\nu^k\rangle$ we have $|\partial_xp(\nu)|=|\langle kg(\fdot,x),\nu^{k-1}\rangle|\le k\|g\| |\nu|(E)^{k-1}<\infty$. Continuity of $x\mapsto \partial_xp(\nu)$ follows from the dominated convergence theorem and the fact that $E$ is Polish, and thus a sequential space.

\ref{IIL:Psmooth:3}: For monomials $p(\nu)=\langle g,\nu^k\rangle$ and $q(\nu)=\langle h,\nu^\ell\rangle$, we have $pq(\nu)=\langle g \otimes h,\nu^{k+\ell}\rangle$. Since for all $x\in E$ and $\nu\in M(E)$
$$(k+\ell)\langle g \otimes h(\fdot,x),\nu^{k+\ell-1}\rangle
=k\langle g(\fdot,x),\nu^{k-1}\rangle\langle h,\nu^\ell\rangle+\ell\langle g,\nu^k\rangle\langle h(\fdot,x),\nu^{\ell-1}\rangle,
$$
the claim follows by \ref{IIL:Psmooth:1}.

\ref{IIL:Psmooth:4}:
Observing that for $p(\nu):=\langle g,\nu^k\rangle$
$$p(\nu+\mu)=\sum_{\ell=0}^{k}\binom {k} \ell\int g(x_1,\ldots, x_{k}) \prod_{i=\ell+1}^{k} \nu(dx_i) \prod_{i=1}^\ell \mu(dx_i)$$
the result follows by \ref{IIL:Psmooth:1}.
\end{proof}

From Lemma~\ref{IIL:Psmooth}\ref{IIL:Psmooth:1} one can deduce the uniqueness of the representation~\eqref{IIeq:p(nu)}.

\begin{corollary} \label{IIC:pol uniqueness}
Suppose $p(\nu) = \sum_{k=0}^m \langle g_k,\nu^k\rangle$ equals zero for all $\nu\in M(E)$. Then $g_k=0$ for all $k$.
\end{corollary}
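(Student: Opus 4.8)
The plan is to prove the contrapositive in the form of an induction on the degree $m$: assuming $p(\nu)=\sum_{k=0}^m\langle g_k,\nu^k\rangle=0$ for all $\nu\in M(E)$, show first that $g_m=0$, and then conclude by applying the inductive hypothesis to the remaining polynomial of degree at most $m-1$. The base case $m=0$ is trivial, since then $p(\nu)=g_0\in\R$ is identically zero, so $g_0=0$.

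For the inductive step, the key idea is to extract the top coefficient by a homogeneity/scaling argument combined with the directional-derivative formula of Lemma~\ref{IIL:Psmooth}\ref{IIL:Psmooth:1}. Concretely, fix $\nu\in M_+(E)$ and consider the real polynomial $t\mapsto p(t\nu)=\sum_{k=0}^m t^k\langle g_k,\nu^k\rangle$ in the single variable $t\geq 0$; since this vanishes for all $t$, each coefficient vanishes, so $\langle g_m,\nu^m\rangle=0$ for every $\nu\in M_+(E)$ (hence, by taking differences of nonnegative measures and using multilinearity, for all $\nu\in M(E)$, though $M_+(E)$ already suffices). Now apply $\partial_{x_1}\cdots\partial_{x_m}$ to the identity $\langle g_m,\nu^m\rangle=0$: iterating Lemma~\ref{IIL:Psmooth}\ref{IIL:Psmooth:1} gives $\partial^m_{x_1\cdots x_m}\langle g_m,\nu^m\rangle = m!\, g_m(x_1,\ldots,x_m)$ (the iterated derivative peels off one argument at a time, each time picking up a factor and lowering the power of $\nu$, until at order $m$ we are left with $\langle g_m(\fdot,x_1,\ldots,x_m),\nu^0\rangle = g_m(x_1,\ldots,x_m)$ times $m!$). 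Since the left-hand side is the iterated directional derivative of the zero function, it is zero, whence $g_m(x_1,\ldots,x_m)=0$ for all $x_1,\ldots,x_m\in E$, i.e.\ $g_m=0$. Then $p(\nu)=\sum_{k=0}^{m-1}\langle g_k,\nu^k\rangle=0$ for all $\nu$, and the inductive hypothesis gives $g_k=0$ for all $k\leq m-1$, completing the induction.

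Alternatively, and perhaps more cleanly, one can avoid the induction entirely by invoking the Taylor representation of Lemma~\ref{IIL:Psmooth}\ref{IIL:Psmooth:4}: differentiating $p$ at $\nu=0$ we get $\langle\partial^\ell p(0),\mu^\ell\rangle = \ell!\,[\text{coefficient of }\mu^\ell\text{ in }p(\mu)]$, and since $p\equiv 0$ each $\partial^\ell p(0)$ is identically zero; combined with Lemma~\ref{IIL:Psmooth}\ref{IIL:Psmooth:1} which identifies $\partial^\ell p(0)$ with $\ell!\,g_\ell$ (as a function on $E^\ell$), this yields $g_\ell=0$ for every $\ell$. Either route works; I would present the direct differentiation argument since it is self-contained.

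The only genuinely delicate point — and the one I would be careful about — is justifying that the iterated directional derivatives $\partial^m_{x_1\cdots x_m}$ can legitimately be applied termwise and commuted, and that applying them to the zero function yields zero pointwise in each $x_i$. This is where one must lean on the fact, established in parts \ref{IIL:Psmooth:c2} and \ref{IIL:Psmooth:2} of Lemma~\ref{IIL:Psmooth}, that each $\partial_x p$ is again in $P$ (so the process can be iterated) and that the formula in part~\ref{IIL:Psmooth:1} holds pointwise for every $x\in E$ and every $\nu\in M(E)$. Since $p\equiv 0$ on all of $M(E)$, the difference quotient defining $\partial_x p(\nu)$ is identically zero before passing to the limit, so no subtle exchange of limits is needed — each derivative of the zero polynomial is literally the zero polynomial. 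This makes the argument short; the bookkeeping in the iterated application of the formula in part~\ref{IIL:Psmooth:1} (tracking the combinatorial factor $k(k-1)\cdots 1 = k!$) is the only computation, and it is routine.
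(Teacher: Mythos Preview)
Your proof is correct and uses essentially the same idea as the paper: iterate the derivative formula of Lemma~\ref{IIL:Psmooth}\ref{IIL:Psmooth:1} $m$ times to recover $m!\,g_m$, conclude $g_m=0$, and descend. The only difference is that your scaling step $t\mapsto p(t\nu)$ is unnecessary: the paper simply differentiates the full sum $p(\nu)=\sum_{k=0}^m\langle g_k,\nu^k\rangle$ directly, since every term with $k<m$ is already killed by $m$-fold application of $\partial_x$, leaving $\partial^m_{x_1\cdots x_m}p(\nu)=m!\,g_m(x_1,\ldots,x_m)$ in one stroke.
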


\begin{proof}
Let $x_1,\ldots,x_m\in E$ be arbitrary and differentiate $m$ times using Lemma~\ref{IIL:Psmooth}\ref{IIL:Psmooth:1} to get $m! g_m(x_1,\ldots,x_m) = \partial_{x_1x_2\cdots x_m}p(\nu) = 0$. Thus $g_m=0$. Now repeat this successively for $g_{m-1}$, $g_{m-2}$, $\ldots$, $g_0$.
\end{proof}

The following property turns out to be particularly useful in the context of the moment formula. In the finite-dimensional setting, the result states that every polynomial on the unit simplex has a homogeneous representative.

\begin{corollary}\label{IIrem5}
Every polynomial on $M(E)$ has a unique homogeneous representative on $M_1(E)$. That is, for every $p\in P$ with $\deg(p)=m$ there is a unique $g\in \widehat C_\Delta(E^{m})$ such that 
\[
p(\nu)=\langle g,\nu^m\rangle\qquad\text{for all}\ \nu\in M_1(E).
\]
\end{corollary}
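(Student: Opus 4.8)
The plan is to reduce the statement to a computation on $M_1(E)$ using the Taylor representation from Lemma~\ref{IIL:Psmooth}\ref{IIL:Psmooth:4} and the fact that on $M_1(E)$ every measure has total mass one. Write $p(\nu)=\sum_{k=0}^m\langle g_k,\nu^k\rangle$ in its (unique) representation. The idea is that the lower-degree monomials $\langle g_k,\nu^k\rangle$ with $k<m$ can be ``homogenized'' by multiplying their coefficient functions by the constant function $1$ in the remaining $m-k$ slots, since $\langle 1,\nu\rangle=\nu(E)=1$ for $\nu\in M_1(E)$. Concretely, for $\nu\in M_1(E)$ we have $\langle g_k,\nu^k\rangle=\langle g_k,\nu^k\rangle\langle 1,\nu\rangle^{m-k}=\langle g_k\otimes 1^{\otimes(m-k)},\nu^m\rangle$, using the identity $\langle g,\nu^k\rangle\langle h,\nu^\ell\rangle=\langle g\otimes h,\nu^{k+\ell}\rangle$ from Section~\ref{IIs21}. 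Here $1^{\otimes(m-k)}\in\widehat C_\Delta(E^{m-k})$ is the constant function $1$, which indeed lies in $C_\Delta(E)$ as noted in the notation section, so the symmetric tensor product is well-defined and lies in $\widehat C_\Delta(E^m)$. Summing, one gets $p(\nu)=\langle g,\nu^m\rangle$ on $M_1(E)$ with
\[
g:=\sum_{k=0}^m g_k\otimes 1^{\otimes(m-k)}\in\widehat C_\Delta(E^m),
\]
which establishes existence of a homogeneous representative.

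For uniqueness, suppose $g,g'\in\widehat C_\Delta(E^m)$ both satisfy $\langle g,\nu^m\rangle=\langle g',\nu^m\rangle=p(\nu)$ for all $\nu\in M_1(E)$; set $h:=g-g'$, so $\langle h,\nu^m\rangle=0$ for all $\nu\in M_1(E)$. The goal is to conclude $h=0$. Here I cannot directly invoke Corollary~\ref{IIC:pol uniqueness}, because that requires $\langle h,\nu^m\rangle=0$ for \emph{all} $\nu\in M(E)$, not just probability measures. The key step is to upgrade from $M_1(E)$ to all of $M_+(E)$ by a scaling argument: for any $\mu\in M_+(E)$ with $\mu\neq 0$, the normalized measure $\nu:=\mu/\mu(E)$ lies in $M_1(E)$, and by homogeneity of degree $m$, $0=\langle h,\nu^m\rangle=\mu(E)^{-m}\langle h,\mu^m\rangle$, hence $\langle h,\mu^m\rangle=0$; the case $\mu=0$ is trivial. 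Thus $\langle h,\mu^m\rangle=0$ for all $\mu\in M_+(E)$. To pass to signed measures, write an arbitrary $\nu\in M(E)$ as $\nu=\nu_+-\nu_-$ and differentiate $m$ times at $\nu_-$ in directions $\delta_{x_1},\ldots,\delta_{x_m}$: by Lemma~\ref{IIL:Psmooth}\ref{IIL:Psmooth:1} applied to the monomial $\mu\mapsto\langle h,\mu^m\rangle$ (which vanishes on $M_+(E)$, in particular on the ray $\nu_-+\varepsilon\delta_x$ for $\varepsilon\ge 0$, giving one-sided derivatives equal to zero), one recovers $m!\,h(x_1,\ldots,x_m)=0$. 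Since $x_1,\ldots,x_m\in E$ are arbitrary, $h=0$, i.e.\ $g=g'$.

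A cleaner route for uniqueness, avoiding one-sided-derivative bookkeeping, is this: from $\langle h,\mu^m\rangle=0$ for all $\mu\in M_+(E)$, fix any $\mu_0\in M_+(E)$ and any $\mu\in M_+(E)$ and consider the real polynomial $t\mapsto\langle h,(\mu_0+t\mu)^m\rangle$ for $t\ge 0$; it vanishes identically on $[0,\infty)$, hence is the zero polynomial in $t$, so in particular its value at every real $t$ is zero. Iterating this observation lets one evaluate $\langle h,\nu^m\rangle$ at $\nu=\mu_+-\mu_-$ by first extending in the direction $\mu_-$ and then recognizing $\langle h,(\mu_+-\mu_-)^m\rangle=0$; then Corollary~\ref{IIC:pol uniqueness} applies verbatim. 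I expect the main obstacle to be precisely this passage from probability measures to all signed measures: the homogenization identity and existence part are essentially a one-line computation, but one must be careful that Corollary~\ref{IIC:pol uniqueness} is not directly applicable and that the scaling/extension argument is needed to bridge the gap. Everything else — well-definedness of $g$ in $\widehat C_\Delta(E^m)$, the tensor-product manipulations — is routine given the results already established in Lemma~\ref{IIL:Psmooth} and Section~\ref{IIs21}.
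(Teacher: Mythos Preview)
Your existence argument is identical to the paper's: write $p(\nu)=\sum_{k=0}^m\langle g_k,\nu^k\rangle$ and set $g:=\sum_{k=0}^m g_k\otimes 1^{\otimes(m-k)}$, using $\langle 1,\nu\rangle=1$ on $M_1(E)$ and the tensor identity $\langle g,\nu^k\rangle\langle h,\nu^\ell\rangle=\langle g\otimes h,\nu^{k+\ell}\rangle$. (The reference to the Taylor formula in your opening sentence is a red herring; you never use it, and don't need to.)

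Where you go further than the paper is on uniqueness. The paper's proof says only ``the result now follows'' after constructing $g$, which strictly speaking establishes existence but not that no other $g'\in\widehat C_\Delta(E^m)$ could represent $p$ on $M_1(E)$. You correctly observe that Corollary~\ref{IIC:pol uniqueness} does not apply directly, since it requires vanishing on all of $M(E)$, and you supply the missing bridge: scale to pass from $M_1(E)$ to $M_+(E)$, then use that $t\mapsto\langle h,(\nu_+ + t\nu_-)^m\rangle$ is a real polynomial vanishing on $[0,\infty)$, hence identically zero, so in particular at $t=-1$. This gives $\langle h,\nu^m\rangle=0$ on all of $M(E)$, and now Corollary~\ref{IIC:pol uniqueness} applies. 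This ``cleaner route'' is correct and is the right way to do it; your first route via one-sided derivatives is also valid but, as you note yourself, needlessly fiddly. So your proof is correct and in fact fills a small gap the paper leaves to the reader.
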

\begin{proof}
Corollary \ref{IIC:pol uniqueness} yields a unique set of coefficients $g_0,\ldots,g_m$ with $g_k\in \widehat C_\Delta(E^k)$ and $p(\nu) = \sum_{k=0}^m \langle g_k,\nu^k\rangle$. The result now follows by setting $g:= \sum_{k=0}^m g_k\otimes 1^{\otimes (m-k)}$.
\end{proof}

\begin{remark}\label{IIrem3}
If we choose to work with coefficients in $\widehat C_0(E^k)$ instead of $\widehat C_\Delta(E^k)$ we would obtain the same class of polynomials on $M_1(E)$. This is because every $g\in \widehat C_\Delta(E^k)$ equals $\sum_{i=0}^k g_i\otimes 1^{\otimes (k-i)}$ for some $g_i\in\widehat C_0(E^i)$, and therefore $\langle g,\nu^k\rangle=\sum_{i=0}^k\langle g_i,\nu^i\rangle$ for all $\nu\in M_1(E)$. Indeed, the $g_i$ are given iteratively by
$$g_0:=g(\Delta,\ldots,\Delta)\quad\text{and}\quad g_i:=\binom k i \Big(g(\Delta,\ldots,\Delta,\fdot)-\sum_{j=0}^{i-1}g_j\otimes 1^{\otimes {(i-j)}}\Big).$$
 However, not every such polynomial admits a homogenous representative on $M_1(E)$ in the sense of Corollary~\ref{IIrem5}, unless $E$ is compact. An example is $1+\langle g,\nu\rangle$ with $g\in C_0(E)$ nonzero. The existence of homogeneous representatives leads to significant notational simplifications when $E$ is not compact (see Remark~\ref{IIrem4} for more details). This is the main reason for working with the spaces $\widehat C_\Delta(E^k)$.
\end{remark}

\subsection{Polynomials with regular coefficients}

The derivative map $x\mapsto\partial_x p(\nu)$ of a polynomial $p$ is only as regular as the coefficients of~$p$. This leads us to consider subspaces of polynomials with more regular coefficients.  Let
$
D \subseteq C_\Delta (E)
$
be a dense linear subspace containing the constant function 1 and define
\begin{equation} \label{IIeq:P^D}
P^D := {\rm span} \left\{1,\ \langle g,\nu\rangle^k\colon k\ge1, \ g\in D \right\}.
\end{equation}
Thus $P^D$ is the subalgebra of $P$ consisting of all (finite) linear combinations of the constant polynomial and ``rank-one'' monomials $\langle g\otimes\cdots\otimes g,\nu^k\rangle=\langle g,\nu\rangle^k$ with $g\in D$. Equivalently, $P^D$ consists of all polynomials $p(\nu)=\phi(\langle g_1,\nu\rangle,\ldots,\langle g_k,\nu\rangle)$ with $k\in\N$, $g_1,\ldots,g_k\in D$, and $\phi$ a polynomial on $\R^k$. 
\begin{lemma}\label{IIIlem1}
For any $p\in P^D$ and $\nu\in M(E)$, we have $\partial^k p(\nu)\in D^{\otimes k}$.  Moreover $P^D$ is dense in $C(M_1(E^\Delta))$. Here the elements of $P^D$ are viewed as functions on $M_1(E^\Delta)$ by first extending them to $M(E^\Delta)$ using Lemma \ref{IIL:Psmooth} \ref{IIL:Psmooth:c1} and then restricting them to $M_1(E^\Delta)$.\end{lemma}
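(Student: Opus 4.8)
The plan is to treat the two assertions separately: the first by direct iterated differentiation, the second by the Stone--Weierstrass theorem.

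For the statement on derivatives, note first that $\partial_x$ is linear and maps $P$ into $P$ by Lemma~\ref{IIL:Psmooth}\ref{IIL:Psmooth:c2}, so it suffices to check the claim on the spanning set of $P^D$ given in \eqref{IIeq:P^D}, namely the constant $1$ and the rank-one monomials $p(\nu)=\langle g,\nu\rangle^k=\langle g^{\otimes k},\nu^k\rangle$ with $g\in D$, $k\ge1$. For the constant, all derivatives of order $\ge1$ vanish. For $p(\nu)=\langle g,\nu\rangle^k$, applying Lemma~\ref{IIL:Psmooth}\ref{IIL:Psmooth:1} repeatedly (equivalently, using $\partial_x\langle g,\nu\rangle=g(x)$ together with the product rule of Lemma~\ref{IIL:Psmooth}\ref{IIL:Psmooth:3}) gives
\[
\partial^j_{x_1\cdots x_j}p(\nu)=\frac{k!}{(k-j)!}\,\langle g,\nu\rangle^{\,k-j}\,g(x_1)\cdots g(x_j)\qquad\text{for }j\le k,
\]
and $\partial^j p(\nu)\equiv 0$ for $j>k$. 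In every case $\partial^j p(\nu)$ is a scalar multiple of $g^{\otimes j}$, hence lies in $D^{\otimes j}$; taking linear combinations over the spanning monomials yields $\partial^k p(\nu)\in D^{\otimes k}$ for all $p\in P^D$ and $\nu\in M(E)$.

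For the density statement, recall that $E^\Delta$ is a compact Polish space, so that $M_1(E^\Delta)$ is compact and metrizable and $C(M_1(E^\Delta))$ is a commutative Banach algebra under the supremum norm. Elements of $P^D$, extended to $M(E^\Delta)$ via Lemma~\ref{IIL:Psmooth}\ref{IIL:Psmooth:c1} and then restricted to $M_1(E^\Delta)$, are continuous by the continuity assertion in that same statement. I would then verify the hypotheses of Stone--Weierstrass. (a) $P^D$ is a subalgebra: by the equivalent description of $P^D$ recorded after \eqref{IIeq:P^D}, it consists of all maps $\nu\mapsto\phi(\langle g_1,\nu\rangle,\dots,\langle g_\ell,\nu\rangle)$ with $g_i\in D$ and $\phi$ a polynomial, and since $D$ is a linear space this family is closed under sums and products (products of such maps correspond to products of the polynomials $\phi$). (b) $P^D$ contains the constants. (c) $P^D$ separates points: if $\mu\ne\nu$ in $M_1(E^\Delta)$, there is $f\in C(E^\Delta)=C_\Delta(E)$ with $\langle f,\mu\rangle\ne\langle f,\nu\rangle$, and since $D$ is dense in $C_\Delta(E)$ one may pick $g\in D$ with $\|g-f\|$ small enough that still $\langle g,\mu\rangle\ne\langle g,\nu\rangle$; the function $\nu\mapsto\langle g,\nu\rangle$ lies in $P^D$. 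The Stone--Weierstrass theorem then shows that $P^D$ is dense in $C(M_1(E^\Delta))$.

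Both parts are essentially routine; the only point that needs a little attention is that it is precisely the passage to the one-point compactification $E^\Delta$ that makes $M_1(E^\Delta)$ \emph{compact} (whereas $M_1(E)$ need not even be locally compact when $E$ is noncompact), so that the compact form of the Stone--Weierstrass theorem is available. One should also note that the extension map of Lemma~\ref{IIL:Psmooth}\ref{IIL:Psmooth:c1} is an algebra homomorphism, so that the subalgebra structure of $P^D$ is preserved upon restriction to $M_1(E^\Delta)$.
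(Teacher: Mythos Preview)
Your proposal is correct and follows essentially the same route as the paper: direct computation of $\partial^k$ on the rank-one spanning monomials followed by linearity for the first claim, and Stone--Weierstrass on the compact space $M_1(E^\Delta)$ for the second. The paper's proof is terser (it writes $\partial^k p(\nu)=\phi^{(k)}(\langle g,\nu\rangle)\,g^{\otimes k}$ for $p(\nu)=\phi(\langle g,\nu\rangle)$ and then simply invokes Stone--Weierstrass together with the density of $D$ in $C(E^\Delta)$), but your more explicit verification of the Stone--Weierstrass hypotheses---in particular the point-separation argument via a density approximation---is a welcome expansion rather than a different strategy.
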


\begin{proof}
For $p(\nu):=\phi(\langle g,\nu\rangle)$ where $\phi$ is polynomial we have $\partial^k p(\nu)=\phi^{(k)}(\langle g,\nu\rangle) g^{\otimes k}\in D^{\otimes k}$. Thus the first part of the result holds for all such~$p$, and by linearity for all $p\in P^D$. For the second part, continuity of polynomials follows by Lemma~\ref{IIL:Psmooth}\ref{IIL:Psmooth:c1}. Stone--Weierstrass and the fact that $D$ is densely contained in $C (E^\Delta)$ yield the density.
\end{proof}

\section{Optimality conditions}\label{IIstool}

We now develop optimality conditions for polynomials of measure arguments, which are instrumental when working with the positive maximum principle on $M_1(E^\Delta)$. Our first result, Theorem~\ref{IIL:KKT}, extends the classical first and second order Karush--Kuhn--Tucker conditions for functions on the finite-dimensional simplex (see e.g.~\cite{B:95}). It is derived by perturbing an optimizer $\nu_*\in M_1(E^\Delta)$ by shifting small amounts of mass to arbitrary points in $E^\Delta$. 
Our second result, Theorem~\ref{IIL:KKT2}, is obtained by deforming the optimizer $\nu_*$ using a group of isometries of $C_\Delta(E)$. The resulting condition is genuinely infinite-dimensional; see Lemma~\ref{IIL:triv_iso}. We will use the operator $\Psi$, which maps any function $g\colon E\times E\to\R^k$ to the function $\Psi(g)\colon E\times E\to\R^k$ given by
\begin{equation} \label{IIPhi(g)}
\Psi(g)(x,y) = \frac12 \left( g(x,x) + g(y,y)-2g(x,y) \right).
\end{equation}
Note that we use Lemma \ref{IIL:Psmooth}\ref{IIL:Psmooth:c1} to extend polynomials from $M_1(E)$ to $M_1(E^{\Delta})$.

\begin{theorem} \label{IIL:KKT}
Let $p\in P$ and $\nu_*\in M_1(E^\Delta)$ satisfy $p(\nu_*)=\max_{M_1(E^\Delta)} p$. Then the following first and second order optimality conditions hold:
\begin{enumerate}
\item\label{IIL:KKT:(i)} $\langle\partial p(\nu_*),\mu\rangle=\sup_{ E}\partial p(\nu_*),$
for all $\mu\in M_1(E^\Delta)$ such that $\supp(\mu)\subseteq\supp(\nu_*)$. 
In particular,
\begin{gather}
\partial_{x} p(\nu_*)=\sup_{E}\partial p(\nu_*) \quad \text{for all $x\in\supp(\nu_*)$}. \label{IIL:KKT:1}
\end{gather}
\item\label{IIL:KKT:(ii)} $\langle\partial^2p(\nu_*),\mu^2\rangle\leq 0$
for all signed measures $\mu\in M(E^\Delta)$ such that $\langle 1, \mu\rangle=0$ and $\supp(|\mu|)\subseteq\supp(\nu_*)$. In particular,
\begin{gather}
\Psi\big(\partial^2 p(\nu_*)\big)(x,y)\leq 0 \quad \text{for all $x,y\in \supp(\nu_*)$} \label{IIL:KKT:2}.
\end{gather}
\end{enumerate}
\end{theorem}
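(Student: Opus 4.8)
The plan is to prove this by perturbing the optimizer $\nu_*$ within $M_1(E^\Delta)$ and using the Taylor expansion from Lemma~\ref{IIL:Psmooth}\ref{IIL:Psmooth:4}. The key observation is that for a polynomial $p$ of degree $k$, if $\nu_*$ maximizes $p$ over $M_1(E^\Delta)$, then for any other $\mu \in M_1(E^\Delta)$ the line segment $\nu_t := (1-t)\nu_* + t\mu = \nu_* + t(\mu - \nu_*)$ stays in $M_1(E^\Delta)$ for $t\in[0,1]$, and $t\mapsto p(\nu_t)$ is a polynomial in $t$ of degree at most $k$ attaining its maximum at $t=0$. Writing $\eta := \mu - \nu_*$, which satisfies $\langle 1,\eta\rangle = 0$, the Taylor representation gives $p(\nu_t) = p(\nu_*) + t\langle \partial p(\nu_*),\eta\rangle + \tfrac{t^2}{2}\langle \partial^2 p(\nu_*),\eta^2\rangle + O(t^3)$. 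Since $t=0$ is a maximum on $[0,1]$, the first-order coefficient must be $\leq 0$: $\langle \partial p(\nu_*),\mu-\nu_*\rangle \leq 0$, i.e. $\langle \partial p(\nu_*),\mu\rangle \leq \langle \partial p(\nu_*),\nu_*\rangle$ for all $\mu\in M_1(E^\Delta)$.

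First I would establish the inequality $\langle \partial p(\nu_*),\mu\rangle \leq \langle\partial p(\nu_*),\nu_*\rangle$ for all $\mu\in M_1(E^\Delta)$. Taking $\mu = \delta_x$ for $x\in E^\Delta$ shows $\partial_x p(\nu_*) \leq \langle \partial p(\nu_*),\nu_*\rangle$ for every $x$, hence $\sup_{E^\Delta}\partial p(\nu_*) \leq \langle\partial p(\nu_*),\nu_*\rangle$; on the other hand $\langle\partial p(\nu_*),\nu_*\rangle$ is an average of values of $\partial p(\nu_*)$, so $\langle\partial p(\nu_*),\nu_*\rangle \leq \sup_{E^\Delta}\partial p(\nu_*)$. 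Combining, $\langle\partial p(\nu_*),\nu_*\rangle = \sup_{E^\Delta}\partial p(\nu_*)$, and since $\partial p(\nu_*)\in C_\Delta(E)$ extends continuously to $E^\Delta$ with $E$ dense, this supremum equals $\sup_E \partial p(\nu_*)$. This proves \ref{IIL:KKT:(i)}: for any $\mu$ with $\supp(\mu)\subseteq\supp(\nu_*)$ we also get $\langle\partial p(\nu_*),\mu\rangle \leq \sup_E\partial p(\nu_*)$ from the above, but the reverse inequality requires that $\partial p(\nu_*)$ actually attains its supremum on $\supp(\nu_*)$; this follows because $\langle \partial p(\nu_*),\nu_*\rangle = \sup_E \partial p(\nu_*)$ forces $\partial_x p(\nu_*) = \sup_E \partial p(\nu_*)$ for $\nu_*$-a.e. $x$, hence by continuity for all $x\in\supp(\nu_*)$, which is exactly~\eqref{IIL:KKT:1}; then any probability measure supported in $\supp(\nu_*)$ integrates this constant value and \ref{IIL:KKT:(i)} follows.

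Next I would turn to the second-order condition \ref{IIL:KKT:(ii)}. Given a signed measure $\mu\in M(E^\Delta)$ with $\langle 1,\mu\rangle = 0$ and $\supp(|\mu|)\subseteq\supp(\nu_*)$, I decompose $\mu = \mu_+ - \mu_-$ with $\mu_\pm\in M_+(E^\Delta)$ of equal total mass $c\geq 0$ and supports inside $\supp(\nu_*)$. For small $t>0$, consider $\nu_t := \nu_* + t\mu$. Because $\supp(\mu_-)\subseteq\supp(\nu_*)$ and $\nu_*$ is a probability measure, a perturbation argument (replacing $\mu_-$ by an absolutely-continuous-with-respect-to-$\nu_*$ approximation, or noting that $\nu_* - s\mu_-/\!\esssup(\cdot)$ stays nonnegative for small $s$ when $\mu_- \ll \nu_*$) shows $\nu_t\in M_1(E^\Delta)$ for $t$ small enough; the constraint $\langle 1,\mu\rangle = 0$ ensures total mass remains $1$. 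Since $\partial p(\nu_*)$ is constant ($=\sup_E\partial p(\nu_*)$) on $\supp(\nu_*)\supseteq\supp(|\mu|)$, we have $\langle\partial p(\nu_*),\mu\rangle = \sup_E\partial p(\nu_*)\cdot\langle 1,\mu\rangle = 0$, so the first-order term in the Taylor expansion vanishes and maximality at $t=0$ forces $\langle\partial^2 p(\nu_*),\mu^2\rangle \leq 0$. Finally, to get \eqref{IIL:KKT:2}, for $x,y\in\supp(\nu_*)$ I apply this with $\mu = \delta_x - \delta_y$ (which has zero total mass and support in $\supp(\nu_*)$), computing $\langle\partial^2 p(\nu_*),(\delta_x-\delta_y)^2\rangle = \partial^2_{xx}p(\nu_*) + \partial^2_{yy}p(\nu_*) - 2\partial^2_{xy}p(\nu_*) = 2\Psi(\partial^2 p(\nu_*))(x,y)\leq 0$.

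**The main obstacle** I anticipate is making rigorous the claim that $\nu_t = \nu_* + t\mu$ lands in $M_1(E^\Delta)$ for the second-order argument when $\mu_-$ is a general positive measure supported in $\supp(\nu_*)$ — the support condition alone does not guarantee $\nu_* - t\mu_- \geq 0$ (e.g. $\mu_-$ could be a Dirac mass at an atom of $\nu_*$ with larger local weight, or concentrated on a $\nu_*$-null subset of the support). The clean fix is a two-step approximation: first prove the inequality $\langle\partial^2 p(\nu_*),\mu^2\rangle\leq 0$ for all $\mu = \mu_+-\mu_-$ with $\mu_\pm\ll\nu_*$ having bounded densities and equal mass (where $\nu_t\geq 0$ is immediate for small $t$), then extend to general $\mu$ with $\supp(|\mu|)\subseteq\supp(\nu_*)$ by a density/continuity argument using that $\langle\partial^2 p(\nu_*),\,\cdot\,\rangle$ is sequentially continuous as a quadratic form on $M(E^\Delta)$ (Lemma~\ref{IIL:Psmooth}\ref{IIL:Psmooth:c1}) and that measures with bounded density w.r.t. $\nu_*$ are weak-* dense among measures supported in $\supp(\nu_*)$; for the special case $\mu = \delta_x-\delta_y$ with $x,y\in\supp(\nu_*)$ one can alternatively perturb directly by replacing $\delta_x,\delta_y$ with small $\nu_*$-balls around $x$ and $y$ and pass to the limit using continuity of $\partial^2 p(\nu_*)$.
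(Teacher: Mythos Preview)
Your proposal is correct, and for part~\ref{IIL:KKT:(i)} it is genuinely different from (and cleaner than) the paper's argument. The paper perturbs $\nu_*$ by $\varepsilon_n(\delta_y-\mu_n)$ where $\mu_n$ is the normalized restriction of $\nu_*$ to a shrinking ball around $x\in\supp(\nu_*)$, and reads off $\partial_x p(\nu_*)\ge\partial_y p(\nu_*)$ from the first-order term. You instead use the convexity of $M_1(E^\Delta)$ directly: the line $\nu_t=(1-t)\nu_*+t\mu$ stays in $M_1(E^\Delta)$ for \emph{any} $\mu\in M_1(E^\Delta)$, so the first-order condition $\langle\partial p(\nu_*),\mu\rangle\le\langle\partial p(\nu_*),\nu_*\rangle$ is immediate, and the ``average equals supremum implies constant on support'' step finishes it. This is the standard first-order condition for optimization over a convex set and avoids any localization.

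For part~\ref{IIL:KKT:(ii)} the two approaches converge: the obstacle you identify---that $\nu_*+t\mu$ need not be nonnegative when $\mu_-$ is merely supported in $\supp(\nu_*)$---is precisely what forces the paper to work with the ball-averages $\mu_n=\nu_*(\,\cdot\cap A_n)/\nu_*(A_n)$ from the outset. The paper proves the inequality first for $\mu=\delta_y-\delta_x$ (via these ball approximations), then for finite signed combinations $\sum\lambda_i\delta_{y_i}-\sum\gamma_i\delta_{x_i}$, and finally passes to the weak closure. Your plan---prove it for $\mu_\pm\ll\nu_*$ with bounded density and then approximate---is equivalent, since the ball-averages are exactly the building blocks of such an approximation. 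Either way the localization trick is unavoidable; your organization just postpones it to a density step, which is fine given the sequential continuity of $\mu\mapsto\langle\partial^2 p(\nu_*),\mu^2\rangle$ on bounded sets in $M(E^\Delta)$.
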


\begin{proof}
\ref{IIL:KKT:(i)}: Pick any $x\in\supp(\nu_*)$ and $y\in E^\Delta$. For each $n\in\N$, let $A_n$ be the ball of radius $1/n$ centered at $x$, intersected with $\supp(\nu_*)$. Then $\nu_*(A_n)>0$, and the probability measures $\mu_n:=\nu_*(\fdot\cap A_n)/\nu_*(A_n)$ converge weakly to $\delta_x$ as $n\to\infty$. Choose $\e_n\in(0,\nu_*(A_n))$. Then $\nu_* \geq \varepsilon_n\mu_n $ since for all $B \in \mathcal{B}(E)$
\[
\nu_*(B) -\e_n \frac{\nu_*(B\cap A_n)}{\nu_*(A_n)} \geq \frac{\nu_*(B \cap A_n)}{\nu_*(A_n)}( \nu_*(A_n)-\e_n) \geq 0.
\]
Hence $\nu_n:=\nu_* + \varepsilon_n(\delta_y - \mu_n)$ is a probability measure. Maximality of $\nu_*$ and Lemma~\ref{IIL:Psmooth}\ref{IIL:Psmooth:4} now give
\[
0 \geq p(\nu_n)-p(\nu_*) =\e_n\langle\partial p(\nu_*),\delta_y-\mu_n\rangle+o(\e_n).
\]
Dividing by $\varepsilon_n$, sending $n$ to infinity, and using that $x\mapsto\partial_xp(\nu_*)$ is bounded and continuous, we obtain $\partial_{x} p(\nu_*)\ge\partial_y p(\nu_*)$. We deduce \eqref{IIL:KKT:1}, which immediately implies~\ref{IIL:KKT:(i)}.

\ref{IIL:KKT:(ii)}: In addition to the above, suppose $y$ is in $\supp(\nu_*)$. Since we also have that $\supp(|\mu_n|)\subseteq\supp(\nu_*)$, we get $\langle\partial p(\nu_*),\delta_y-\mu_n\rangle=0$ due to \ref{IIL:KKT:(i)}. Maximality of $\nu_*$ and Lemma~\ref{IIL:Psmooth}\ref{IIL:Psmooth:4} then give
\[
0 \geq p(\nu_n)-p(\nu_*) = \frac12 \e_n^2\langle\partial^2 p(\nu_*),(\delta_y-\mu_n)^2\rangle+o(\e_n^2),
\]
and therefore $\langle\partial^2 p(\nu_*),(\delta_y-\delta_x)^2\rangle\le0$. More generally, consider measures of the form
\[
\nu_n:=\nu_*+\e_n\left( \sum_{i=1}^m\lambda_i\delta_{y_i} - \sum_{i=1}^m \gamma_i\mu_{i,n}\right)
\]
for some points $y_i\in\supp(\nu_*)$, convex weights $\lambda_1,\ldots,\lambda_m$ and $\gamma_1,\ldots,\gamma_m$, and $\mu_{i,n}$ constructed as $\mu_n$ above with $x$ replaced by $x_i\in\supp(\nu_*)$. Letting $\e_n$ decrease to zero sufficiently rapidly, the above argument gives $\langle\partial^2p(\nu_*),\mu^2\rangle\leq 0$ for the signed measure
\[
\mu = \sum_{i=1}^m\lambda_i\delta_{y_i} - \sum_{i=1}^m \gamma_i\delta_{x_i}.
\]
Passing to the weak closure yields \ref{IIL:KKT:(ii)} with the additional restriction that the positive and negative parts of $\mu$ are probability measures. The general case is obtained by scaling. Finally, since $\langle\partial^2 p(\nu_*),(\delta_y-\delta_x)^2\rangle=2\Psi(\partial^2 p(\nu_*))(x,y)$ we obtain \eqref{IIL:KKT:2}.
\end{proof}

\begin{remark}
Note the similarity between Theorem~\ref{IIL:KKT} and the classical Karush--Kuhn--Tucker conditions on the finite-dimensional simplex $\Delta^d$.
Let $f\in C^2(\R^d)$ and $x^*\in \Delta^d$ satisfy $f(x^*)=\max_{\Delta^d}f$. Then the first and second order (necessary) Karush--Kuhn--Tucker conditions on $\Delta^d$ hold:
\begin{enumerate}
\item For each $v\in \Delta^d$ such that $v_i=0$ whenever $x^*_i=0$,
$\nabla f(x^*)^\top v= \max_{j\in\{1,\ldots,d\}}\frac{\partial f}{\partial x_j}(x^*).$
\item For each $v\in\R^d$ such that $\mathbf1^\top v=0$ and $v_i=0$ whenever $x^*_i=0$,
$v^\top\nabla^2 f(x^*) v\leq0,$
where $\mathbf1:=(1,\ldots,1)^\top$.
\end{enumerate}
\end{remark}

\begin{remark}
Taking again $E=\{1, \ldots, d\}$ as example, the appearance of $\Psi$ in \eqref{IIL:KKT:2} can be understood as follows. Suppose $z\in\Delta^d$ maximizes a function $f\in C^2(\R^d)$ over $\Delta^d$. Then for every $i,j$ such that $z_i>0$ and $z_j>0$, we must have $(e_i-e_j)^\top \nabla^2 f(z)(e_i-e_j)\le0$, where $e_i$ is the $i$-th canonical unit vector. Indeed, otherwise $z\pm\varepsilon(e_i-e_j)$ would lie in $\Delta^d$ and give a higher function value for small $\varepsilon>0$. More explicitly, we must have
\[
\partial^2_{ii}f(z)+\partial^2_{jj}f(z)-2\partial^2_{ij}f(z)\le0,
\]
where the left hand side is equal to $2\Psi(\partial^2f(z))(i,j)$ on $E=\{1,\ldots,d\}$.
\end{remark}

For the remainder of this section, $D\subseteq C_\Delta(E)$ is a linear subspace, and $P^D$ is defined by \eqref{IIeq:P^D}.

Our next optimality condition is more subtle, in that it becomes trivial in the finite-dimensional case; see Lemma~\ref{IIL:triv_iso}. The basic observation is that a group of isometries $T_t$ of $C_\Delta(E)$ induces a flow of measures $\mu_t\in M_+(E^\Delta)$ via the formula $\langle g,\mu_t\rangle = \langle T_tg, \mu\rangle$ for every $g\in C_\Delta(E)$, where $\mu\in M_+(E^\Delta)$ is fixed. The value of a polynomial in its maximizer $\nu_*$ cannot be less than its value in $\nu_*-\mu+\mu_t$, for any $t$, and this leads to an optimality condition in terms of the group generator~$A$.

For example, if $E=\R$, the generator could be $Ag=\tau g'$ for some $\tau\in C_{\Delta}^1(\R)$. The isometries would then be $T_tg:=g(\phi(t,\fdot))$, where $\phi$ solves $\frac{d}{d t}\phi(t,x) = \tau(\phi(t,x))$ with initial condition $\phi(0,x)=x$. The corresponding flow of measures would consist of the pushforwards of $\mu$ with respect to $\phi(t,\fdot)$.  For more details see Lemma~\ref{lem1}.

The tensor notation $A\otimes A$ is used to denote the linear operator from $D\otimes D$ to $\widehat C_\Delta(E^2)$ determined by
$$(A\otimes A)(g\otimes g):=(Ag)\otimes (Ag)$$
 for a given linear operator $A\colon D\to  C_\Delta (E)$.

\begin{theorem}\label{IIL:KKT2}
Let $p\in P^D$ and $\nu_*\in M_1(E^\Delta)$ satisfy $p(\nu_*)=\max_{M_1(E^\Delta)} p$. Let $A$ be the generator of a strongly continuous group of positive isometries of $C_\Delta(E)$, and assume the domain of $A$ contains both $D$ and $A(D)$. Then
\[
\langle A^2(\partial p(\nu_*)), \mu\rangle + \langle (A\otimes A)(\partial^2 p(\nu_*)),\mu^2\rangle \le 0
\]
for every $\mu \in M_+(E^\Delta)$ with $\mu\le \nu_*$.
\end{theorem}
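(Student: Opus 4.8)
The plan is to mimic the perturbation strategy of Theorem~\ref{IIL:KKT}, but instead of shifting Dirac masses we deform $\nu_*$ along the flow generated by $A$. Concretely, fix $\mu\in M_+(E^\Delta)$ with $\mu\le\nu_*$, let $T_t=e^{tA}$ be the strongly continuous group of positive isometries, and define the flow of measures $\mu_t\in M_+(E^\Delta)$ by $\langle g,\mu_t\rangle:=\langle T_tg,\mu\rangle$ for all $g\in C_\Delta(E)$. Since $T_t$ is a positive isometry fixing the constants (a positive isometry of $C_\Delta(E)$ with $T_t1=1$, which follows because the group is generated by $A$ with $A1=0$; if $1\notin D$ one argues via the structure of surjective isometries of $C(E^\Delta)$), each $\mu_t$ is a positive measure with $\mu_t(E^\Delta)=\mu(E^\Delta)$, so $\nu_t:=\nu_*-\mu+\mu_t$ is a probability measure on $E^\Delta$; the condition $\mu\le\nu_*$ guarantees that replacing $\mu$ by $\mu_t$ keeps $\nu_t$ nonnegative at time $t=0$, and for small $t$ positivity is preserved by continuity — but the cleanest route is to note $\nu_t=(\nu_*-\mu)+\mu_t\ge 0$ provided the group consists of \emph{positive} isometries, which it does by hypothesis. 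Then $t\mapsto p(\nu_t)$ attains a maximum at $t=0$.

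Next I would compute the first two derivatives of $t\mapsto p(\nu_t)$ at $t=0$. Write $\eta_t:=\mu_t-\mu$, so $\nu_t=\nu_*+\eta_t$ with $\eta_0=0$. Using the Taylor representation of Lemma~\ref{IIL:Psmooth}\ref{IIL:Psmooth:4},
\[
p(\nu_t)-p(\nu_*)=\langle\partial p(\nu_*),\eta_t\rangle+\tfrac12\langle\partial^2p(\nu_*),\eta_t^2\rangle+\text{(higher order in $\eta_t$)}.
\]
Since $\partial p(\nu_*)\in D$ and $\partial^2 p(\nu_*)\in D\otimes D$ by Lemma~\ref{IIIlem1} (here is where $p\in P^D$ rather than merely $p\in P$ is used), I can apply $T_t$ directly: $\langle\partial p(\nu_*),\mu_t\rangle=\langle T_t(\partial p(\nu_*)),\mu\rangle$, and similarly $\langle\partial^2p(\nu_*),\mu_t^2\rangle=\langle(T_t\otimes T_t)(\partial^2 p(\nu_*)),\mu^2\rangle$. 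Expanding $T_tg=g+tAg+\tfrac{t^2}{2}A^2g+o(t^2)$ in the strong topology (valid because $D$ and $A(D)$ lie in the domain of $A$, so $g,Ag$ are both differentiable under the group), and likewise $(T_t\otimes T_t)(g\otimes g)=(T_tg)\otimes(T_tg)$, a direct bookkeeping of the $t$ and $t^2$ terms gives
\[
\frac{d}{dt}\Big|_{t=0}p(\nu_t)=\langle A(\partial p(\nu_*)),\mu\rangle,\qquad
\frac{d^2}{dt^2}\Big|_{t=0}p(\nu_t)=\langle A^2(\partial p(\nu_*)),\mu\rangle+\langle(A\otimes A)(\partial^2p(\nu_*)),\mu^2\rangle,
\]
the cross terms $\langle\partial^2p(\nu_*),\mu\,\eta_t\rangle$ and the genuinely higher-order terms contributing only $o(t^2)$ since $\eta_t=O(t)$. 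Because $t=0$ is an interior maximum of the scalar function $t\mapsto p(\nu_t)$ (the group is two-sided, so $t$ ranges over a neighborhood of $0$), the first derivative vanishes and the second derivative is $\le 0$, which is exactly the asserted inequality.

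The main obstacle I anticipate is making the differentiation of $t\mapsto p(\nu_t)$ rigorous rather than formal: one must justify that $\langle\partial p(\nu_*),\mu_t\rangle$ and $\langle\partial^2 p(\nu_*),\mu_t^2\rangle$ are genuinely twice differentiable in $t$ with the claimed derivatives, and that the remainder in the Taylor expansion of $p$ — which involves $\partial^\ell p(\nu_*)$ for $\ell\le\deg p$ paired against $\eta_t^\ell$ — contributes only $o(t^2)$; this is where uniform boundedness of $|\mu_t|(E^\Delta)=\mu(E^\Delta)$ and the estimate $\|\eta_t\|_{TV}=O(t)$ (from strong continuity of $T_t$ on the separable-enough part of $D$, together with the isometry property) are needed. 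A secondary subtlety is confirming that the positive isometry group fixes the constant function, so that $\nu_t\in M_1(E^\Delta)$; this should follow from $T_t1=1$, which holds because $A$ generates the group and, for a group of \emph{positive} isometries of $C(E^\Delta)$, the Banach–Stone/Kakutani description forces $T_t$ to be an algebra isomorphism composed with multiplication by a unimodular function, and positivity plus the group law pin that function to $1$. Once these analytic points are in place, the optimality argument itself is just "interior maximum $\Rightarrow$ $f'(0)=0$, $f''(0)\le 0$."
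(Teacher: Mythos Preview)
Your approach is essentially the paper's: deform $\nu_*$ along the flow $\mu_t$ induced by $\{T_t\}$, Taylor-expand $p$ via Lemma~\ref{IIL:Psmooth}\ref{IIL:Psmooth:4}, and read off the $t^2$-coefficient from $p(\nu_*+\mu_t-\mu)\le p(\nu_*)$. The one difference is how the first-order coefficient $\langle A(\partial p(\nu_*)),\mu\rangle$ is shown to vanish: you invoke that $t=0$ is an \emph{interior} maximum of the scalar map $t\mapsto p(\nu_t)$ (two-sided group), whereas the paper establishes the stronger pointwise fact $A(\partial p(\nu_*))=0$ on $\supp(\nu_*)$ by combining the first-order condition of Theorem~\ref{IIL:KKT} with the observation that both $A$ and $-A$ satisfy the positive maximum principle. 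Your shortcut is cleaner for this theorem; the paper's route yields extra information that is reused later (Remark~\ref{IIrem7}).

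One correction: the estimate $\|\eta_t\|_{TV}=O(t)$ is generally false when $A$ is unbounded, and you do not need it. What is required---and what the paper uses---is only $\langle g,\eta_t^k\rangle=O(t^k)$ for each fixed $g\in D^{\otimes k}$, which follows from $\|T_tg-g\|\le |t|\,\|Ag\|$ for $g\in D$.
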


\begin{proof}
Let $\{T_t\}_{t\in\R}$ be the group generated by $A$. For any $\mu\in M_+(E^\Delta)$, the group induces a flow of measures $\mu_t\in M(E^\Delta)$ via the formula $\langle g,\mu_t\rangle = \langle T_tg, \mu\rangle$ for $g\in C_\Delta(E)$. The positivity and isometry property of $T_t$ implies that $\mu_t$ is nonnegative and has constant total mass $\mu_t(E^\Delta)=\mu(E^\Delta)$. Therefore, assuming henceforth that $\mu\le\nu_*$, it follows that $\nu_*+\mu_t-\mu$ is a probability measure.
Since $\|T_t g-g\|=O(t)$ for every $g\in D$, we have $\langle g,(\mu_t-\mu)^k\rangle=O(t^k)$ for every $g\in D^{\otimes k}$. Maximality of $\nu_*$ and Lemma~\ref{IIL:Psmooth}\ref{IIL:Psmooth:4} then give 
\begin{align}
0 &\ge p(\nu_*+\mu_t-\mu) - p(\nu_*) \nonumber \\
&= \langle\partial p(\nu_*),\mu_t-\mu\rangle + \frac12 \langle\partial^2 p(\nu_*),(\mu_t-\mu)^2\rangle+o(t^2) \nonumber \\
&=\langle(T_t-\id)\partial p(\nu_*),\mu\rangle + \frac12 \langle(T_t\otimes T_t-2\,T_t\otimes\id+\id\otimes\id)\partial^2p(\nu_*),\mu^2\rangle+o(t^2). \label{IIL:KKT2:eq1}
\end{align}
We claim that both $A$ and $-A$ satisfy the positive maximum principle on $E^\Delta$. Indeed, for $f\in D$ and $x\in E^\Delta$ with $f(x)=\max_{E^\Delta}f\ge0$, the positivity and isometry property give
\begin{equation} \label{IIeqL:KKT2:5}
T_t f(x) \le T_t f^+(x) \le \|T_t f^+\|=\|f^+\|=f(x).
\end{equation}
Thus $Af(x)=\lim_{t\downarrow0}(T_tf(x)-f(x))/t\le 0$ as well as $-Af(x)=\lim_{t\downarrow0}(T_{-t}f(x)-f(x))/t\le 0$, proving the claim.
Since $\partial_x p(\nu_*)=\sup_E\partial p(\nu_*)$ for all $x\in\supp(\nu^*)$ due to Theorem~\ref{IIL:KKT}, it follows that $A(\partial p(\nu_*))(x) = 0$ for all such $x$. As a result, using that $\supp(\mu)\subseteq\supp(\nu_*)$ and that the domain of $A$ contains $A(D)$, we get
\begin{equation} \label{IIL:KKT2:eq2}
\langle(T_t-\id)\partial p(\nu_*),\mu\rangle = \langle(T_t-\id-tA)\partial p(\nu_*),\mu\rangle = \frac12t^2\langle A^2( \partial p(\nu_*)),\mu\rangle+ o(t^2).
\end{equation}
Furthermore, using that
\[
(T_t\otimes T_t-2\,T_t\otimes\id+\id\otimes\id)(g\otimes g)=(T_tg-g)\otimes(T_tg-g)
\]
for all $g\in D$, we deduce that
\begin{equation} \label{IIL:KKT2:eq3}
\langle(T_t\otimes T_t-2\,T_t\otimes\id+\id\otimes\id)g,\mu^2\rangle = t^2 \langle (A\otimes A)g,\mu^2\rangle + o(t^2)
\end{equation}
for all $g\in D\otimes D$. Inserting \eqref{IIL:KKT2:eq2} and \eqref{IIL:KKT2:eq3} into \eqref{IIL:KKT2:eq1}, dividing by $t^2$, and sending $t$ to zero yields
\[
0 \ge \frac12 \langle A^2(\partial p(\nu_*)),\mu\rangle + \frac12 \langle (A\otimes A)\partial^2p(\nu_*),\mu^2\rangle.
\]
This completes the proof.
\end{proof}

 \begin{remark}\label{IIrem7}
We claim that for $A$ as in Theorem~\ref{IIL:KKT2}, the operator $A^2$ satisfies the positive maximum principle on $E^\Delta$. Indeed, let $f\in D$ and $x\in E^\Delta$ with $f(x)=\max_{E^\Delta}f\ge0$. Then, as in \eqref{IIeqL:KKT2:5} and with the same notation, we have $T_tf(x)\le f(x)$, and $Af(x)=0$ since both $A$ and $-A$ satisfy the positive maximum principle on $E^\Delta$. Hence $A^2f(x) = \lim_{t\downarrow0}(T_tf(x)-f(x)-Af(x))/t\le0$, which proves the claim.
\end{remark}

The following lemma illustrates the pure infinite-dimensional nature of the condition provided in Theorem~\ref{IIL:KKT2}.

\begin{lemma} \label{IIL:triv_iso}
Let $A$ be the generator of a strongly continuous group of positive isometries of $C_\Delta(E)$. If the domain of $A$ is all of $C_\Delta(E)$, then $A=0$. This is in particular the case if $A$ is bounded or $E$ consists of finitely many points.
\end{lemma}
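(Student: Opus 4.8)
The plan is to show that a strongly continuous group of positive isometries of $C_\Delta(E)\cong C(E^\Delta)$ whose generator $A$ is everywhere-defined must in fact be a group of algebra homomorphisms, and that the only such group with bounded generator is trivial. First I would invoke the closed graph theorem: since $A$ is closed (being a generator) and defined on all of the Banach space $C(E^\Delta)$, it is bounded, so $T_t=e^{tA}$ is norm-continuous. Hence it suffices to treat the case of bounded $A$. For bounded $A$, each $T_t$ is a positive linear isometry of $C(E^\Delta)$ onto itself (surjectivity because $T_{-t}$ is the inverse), and by the Banach--Stone theorem every such surjective isometry has the form $T_tf = h_t\cdot(f\circ\varphi_t)$ for a homeomorphism $\varphi_t$ of $E^\Delta$ and a unimodular continuous function $h_t$; positivity forces $h_t\equiv 1$, so $T_tf = f\circ\varphi_t$. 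In particular $T_t\mathbf 1=\mathbf 1$, so $A\mathbf1=0$, and each $T_t$ is multiplicative: $T_t(fg)=(T_tf)(T_tg)$.

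Next I would differentiate the multiplicativity relation at $t=0$. Since $A$ is bounded, $t\mapsto T_tf$ is differentiable in norm with derivative $T_t Af$, so differentiating $T_t(fg)=(T_tf)(T_tg)$ at $t=0$ yields the Leibniz rule $A(fg)=(Af)g+f(Ag)$ for all $f,g\in C(E^\Delta)$; that is, $A$ is a bounded derivation of the commutative $C^*$-algebra $C(E^\Delta)$. It is a standard fact that a commutative $C^*$-algebra admits no nonzero bounded derivation: one way to see it is to note that for any projection — but $C(E^\Delta)$ may have few projections, so instead argue via the spectrum. Fix $x\in E^\Delta$ and consider the evaluation character $\chi=\delta_x$. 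Then $f\mapsto A f(x)$ is a bounded linear functional; from $A\mathbf1=0$ and the derivation identity applied to $f^n$ one gets $Af^n(x)=nf(x)^{n-1}Af(x)$, and comparing norms $\|Af^n(x)\|\le\|A\|\,\|f\|^n$ forces $Af(x)=0$ whenever $|f(x)|=\|f\|$; a short scaling/translation argument (replace $f$ by $f-f(x)\mathbf1$ and use that $A$ kills constants) then removes this restriction and gives $Af(x)=0$ for all $f$ and all $x$, i.e.\ $A=0$.

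Finally, the two special cases: if $A$ is bounded we are already done by the above; and if $E$ consists of finitely many points then $C_\Delta(E)=C(E)$ is finite-dimensional, so $A$ is automatically bounded and hence zero. Alternatively, in the finite case one sees directly that $T_t$ is a permutation of the (finitely many) coordinates up to positive scaling, the isometry and continuity constraints force it to be the identity permutation, so $A=0$.

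I expect the main obstacle to be the invocation of the Banach--Stone theorem in the right generality — ensuring that $E^\Delta$ being a compact Polish (hence compact Hausdorff) space is exactly the hypothesis under which surjective positive linear isometries of $C(E^\Delta)$ are precisely the composition operators $f\mapsto f\circ\varphi$. Once that structural description is in hand, the differentiation step and the ``no bounded derivations'' argument are routine. An alternative route that sidesteps Banach--Stone entirely is to show directly, using only the positive-maximum-principle properties of $A$ and $-A$ established in the proof of Theorem~\ref{IIL:KKT2} together with $A\mathbf1=0$ (which follows since $\mathbf1$ attains its max everywhere, so $A\mathbf1\le0$ and $-A\mathbf1\le0$), that every $f\in C(E^\Delta)$ satisfies $Af=0$: at any point $x$ where $f-f(x)\mathbf1$ attains its maximum value $0$ one gets $A(f-f(x)\mathbf1)(x)=Af(x)\le0$, and applying the same to $-f$ gives $Af(x)\ge0$; since $f-f(x)\mathbf1$ attains its max at some point for which this applies and one can vary over enough test functions, one concludes $A=0$ — this is essentially the argument already run in Remark~\ref{IIrem7} pushed one step further, and it has the advantage of not needing boundedness a priori, only the everywhere-definedness together with the standing isometry hypotheses.
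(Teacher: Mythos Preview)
Your main route via Banach--Stone and the derivation property is a valid and genuinely different approach from the paper's, but the bridging step in your ``no bounded derivations'' sketch does not work as written: replacing $f$ by $f-f(x)\mathbf1$ produces a function $g$ with $g(x)=0$, so $|g(x)|=\|g\|$ holds only when $g\equiv0$, and your $|f(x)|=\|f\|$ criterion therefore does not apply. The standard fix is to first reduce to $f(x)=0$ (which you do), then write $f=f^+-f^-$ with $f^\pm(x)=0$ and use $f^\pm=(\sqrt{f^\pm})^2$ together with the Leibniz rule to obtain $Af^\pm(x)=2\sqrt{f^\pm(x)}\,A(\sqrt{f^\pm})(x)=0$; alternatively, simply cite the Singer--Wermer theorem that bounded derivations of a commutative Banach algebra map into the Jacobson radical, which for $C(E^\Delta)$ is zero. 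Your alternative route at the end is closer in spirit to the paper's argument but, as you state it, only yields $Af(x)=0$ at extremal points of $f$; it too needs the $f=f^+-f^-$ decomposition (crucially using that the domain of $A$ is all of $C(E^\Delta)$, so that $A$ applies to the non-smooth $f^\pm$) to reach arbitrary points.

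The paper's proof is shorter and avoids Banach--Stone entirely. Having recorded that both $A$ and $-A$ satisfy the positive maximum principle on $E$ and that $A1=0$, it invokes Lemma~\ref{IIML_L1}, which (via the closed graph theorem and Riesz representation) says that any such operator with full domain $C(E^\Delta)$ is necessarily a bounded jump operator $Bg(x)=\int(g(\xi)-g(x))\,\nu_B(x,d\xi)$ with a finite nonnegative kernel. Applying this to $B=A$ and $B=-A$ and adding gives $0=\int(g(\xi)-g(x))(\nu_A+\nu_{-A})(x,d\xi)$ for all $g$, which forces both kernels to be supported on the diagonal and hence $A=0$. Your approach buys a structural picture (each $T_t$ is a composition operator $f\mapsto f\circ\varphi_t$); the paper's buys brevity by reusing a representation lemma that is needed elsewhere anyway.
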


\begin{proof}
Both $A$ and $-A$ satisfy the positive maximum principle on $E$, and $A1=0$. Therefore Lemma~\ref{IIML_L1} implies that $A$ and $-A$ are both of the form \eqref{IIeqML_L1} with $B=\pm A$. As a result,
\[
0 = Ag(x) - Ag(x) = \int ( g(\xi)-g(x))(\nu_A + \nu_{-A})(x,d\xi)
\]
for all $x\in E$ and $g\in C(E^\Delta)$. This implies that $1_{\{x\}^c}(\xi)\nu_A(x,d\xi)$ and $1_{\{x\}^c}(\xi)\nu_{-A}(x,d\xi)$ are zero for all $x\in E$ and hence that $A=0$. Since each linear operator on a finite-dimensional vector space is bounded, and the domain of a bounded operator on $C_\Delta(E)$ can be extended to all of $C_\Delta(E)$, the second part follows.
\end{proof}

\section{Polynomial operators} \label{sec_pol_op}

Let $E$ be a locally compact Polish space. We now define polynomial operators, which constitute a class of possibly unbounded linear operators acting on polynomials. They are not defined on all of $P$ in general, but only on the subspace $P^D$ for some dense subspace $D \subseteq C_\Delta (E)$; see \eqref{IIeq:P^D}. An analog of this notion has appeared previously in connection with finite-dimensional polynomial processes; see e.g.~\cite{CKT:12,FL:16,CLS:17}.

\begin{definition}\label{IIdef1}
Fix $\s\subseteq M(E)$. A linear operator $L\colon P^D\to P$ is called $\s$-polynomial if for every $p\in P^D$ there is some $q\in P$ such that $q|_{\s}=Lp|_{\s}$ and
$$\deg(q) \le \deg(p).$$
\end{definition}

Given a linear operator $L\colon P^D\to P$, its associated carr\'e-du-champ operator is the symmetric bilinear map $\Gamma\colon P^D\times P^D\to P$ defined by
\begin{equation} \label{IIeq:Gamma}
\Gamma(p,q)=L(pq)-pLq-qLp.
\end{equation}
The carr\'e-du-champ operator gives information about the quadratic variation of the martingales appearing in the martingale problem for the operator $L$. It also gives information about path continuity of solutions to such martingale problems. We return to this issue in Lemma~\ref{IIlem5}, which roughly speaking states that path continuity holds precisely when the carr\'e-du-champ operator $\Gamma$ is a derivation, which is defined as follows.

\begin{definition}
Fix $\s\subseteq M(E)$. A symmetric bilinear map $\Gamma\colon P^D\times P^D\to P$ is called an $\s$-derivation if for all $p,q,r\in P^D$, $\Gamma(pq,r)=p\Gamma(q,r)+q\Gamma(p,r)$ on $\s$.
\end{definition}

For a finite-dimensional diffusion it is known that its generator is polynomial if and only if the drift and diffusion coefficients are polynomial of first and second degree, respectively; see~\cite{CKT:12} and \cite{FL:16}. The following result is the generalization of this fact to the probability-valued setting. The proof is given in Section~\ref{IIE}.

\begin{theorem}\label{IIT:Lpol}
Let $L\colon P^D\to P$ be a linear operator. Then $L$ is $M_1(E)$-polynomial and its carr\'e-du-champ operator $\Gamma$ is an $M_1(E)$-derivation if and only if 
\begin{equation}\label{IIeq:T:Lpol}
\begin{gathered}
Lp(\nu) = \big\langle B(\partial p(\nu)), \nu\big\rangle + \frac{1}{2} \big\langle Q(\partial^2p(\nu)), \nu^2\big\rangle, \quad \nu\in M_1(E),\\
\begin{minipage}[c][2em][c]{.8\textwidth}
\begin{center}
for some linear operators $B\colon D\to  C_\Delta (E)$ and $Q\colon D\otimes D\to \widehat C_\Delta (E^2)$.
\end{center}
\end{minipage}
\end{gathered}
\end{equation}
In this case, $B$ and $Q$ are uniquely determined by $L$.
\end{theorem}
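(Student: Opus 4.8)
The plan is to prove the two directions separately, with the "if" direction being essentially a verification and the "only if" direction requiring the real work.

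For the "if" direction, I would start from the representation \eqref{IIeq:T:Lpol} and check that the right-hand side is well-defined on $P^D$: by Lemma~\ref{IIIlem1}, $\partial p(\nu)\in D$ and $\partial^2 p(\nu)\in D\otimes D$ for $p\in P^D$, so $B(\partial p(\nu))$ and $Q(\partial^2 p(\nu))$ make sense, and one checks that $\nu\mapsto \langle B(\partial p(\nu)),\nu\rangle$ and $\nu\mapsto\langle Q(\partial^2 p(\nu)),\nu^2\rangle$ are polynomials in $\nu$ (using that $\partial p(\nu)$ depends polynomially on $\nu$ with degree $\deg(p)-1$, by Lemma~\ref{IIL:Psmooth}). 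The key point is the degree bound: since $\partial p(\nu)$ has degree $\le\deg(p)-1$ as a polynomial in $\nu$, the first term has degree $\le\deg(p)$, and similarly $\partial^2 p(\nu)$ has degree $\le\deg(p)-2$, so the second term also has degree $\le\deg(p)$; hence $L$ is $M_1(E)$-polynomial. To see that $\Gamma$ is a derivation, compute $\Gamma(p,q)$ directly using the product rules from Lemma~\ref{IIL:Psmooth}\ref{IIL:Psmooth:3} applied twice: the first-order term $\langle B(\partial(\cdot)),\nu\rangle$ is itself a derivation (it annihilates in the carré-du-champ combination because $\partial$ satisfies the Leibniz rule), while the second-order term contributes $\Gamma(p,q)(\nu)=\langle Q\big(\partial p(\nu)\otimes \partial q(\nu)\big),\nu^2\rangle$ (suitably symmetrized), which one verifies satisfies the derivation identity $\Gamma(pq,r)=p\Gamma(q,r)+q\Gamma(p,r)$ on $M_1(E)$ again by the Leibniz rule for $\partial$.

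For the "only if" direction, suppose $L$ is $M_1(E)$-polynomial and $\Gamma$ is an $M_1(E)$-derivation. First I would define $B$ and $Q$ by testing $L$ against linear and quadratic monomials: for $g\in D$, set $Bg := $ the coefficient extracted from $L(\langle g,\nu\rangle)$ — more precisely, since $L(\langle g,\cdot\rangle)$ is a polynomial of degree $\le 1$, write it as $\langle Bg,\nu\rangle + c_g$ on $M_1(E)$; one shows $c_g=0$ by linearity and by evaluating at a Dirac mass, or by noting $L$ maps the rank-one monomial of degree $1$ to degree $\le 1$. For $Q$, apply $\Gamma$ to two copies of $\langle g,\cdot\rangle$: since $\Gamma(\langle g,\cdot\rangle,\langle g,\cdot\rangle) = L(\langle g,\nu\rangle^2) - 2\langle g,\nu\rangle L(\langle g,\nu\rangle)$ is a polynomial of degree $\le 2$ that vanishes when $\nu$ is a Dirac mass (by the derivation property, since $\Gamma$ on a Dirac evaluates to something forced to be zero — this uses that $\langle g,\delta_x\rangle^2=\langle g^{\otimes 2},\delta_x^2\rangle$ so the "square" relation degenerates), it has the form $\langle Qg^{\otimes 2},\nu^2\rangle$ with $Q(g\otimes g)(x,x)=0$; extend $Q$ bilinearly to $D\otimes D$. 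Then one proves by induction on $\deg(p)$ that \eqref{IIeq:T:Lpol} holds: the base cases $\deg p\le 1$ are the definitions of $B$; for the inductive step, write $p$ as a linear combination of products $\langle g,\nu\rangle\langle h,\nu\rangle$ with lower-degree factors (using that $P^D$ is generated by powers $\langle g,\nu\rangle^k$), apply $L(pq)=pLq+qLp+\Gamma(p,q)$ together with the induction hypothesis and the Leibniz rule for $\partial$, and check that the $B$- and $Q$-terms recombine correctly via the product rule of Lemma~\ref{IIL:Psmooth}. Uniqueness of $B$ and $Q$ follows because $\langle Bg,\nu\rangle$ and $\langle Qg^{\otimes 2},\nu^2\rangle$ are recovered from $L$ by the extraction above, combined with the uniqueness of the monomial representation (Corollary~\ref{IIC:pol uniqueness}).

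The main obstacle I anticipate is the bookkeeping in the inductive step of the "only if" direction: one must show that the recombination of the degree-reducing terms produces exactly $\langle B(\partial p(\nu)),\nu\rangle + \tfrac12\langle Q(\partial^2 p(\nu)),\nu^2\rangle$ and no spurious lower-order terms — this is where the hypotheses that $L$ is \emph{polynomial} (degree non-increasing) and that $\Gamma$ is a \emph{derivation} are both genuinely used, the former to kill terms of degree $> \deg p$ and the latter to control the quadratic (carré-du-champ) contribution. A secondary subtlety is handling the extension from $M_1(E)$ to $M_1(E^\Delta)$ and ensuring the coefficients $Bg$ and $Qg^{\otimes 2}$ genuinely land in $C_\Delta(E)$ and $\widehat C_\Delta(E^2)$ respectively, which follows from Lemma~\ref{IIL:Psmooth}\ref{IIL:Psmooth:2} and \ref{IIL:Psmooth:c1} applied to the polynomials $Lp$. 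Full details are deferred to Section~\ref{IIE}.
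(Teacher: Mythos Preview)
Your overall strategy matches the paper's proof in Section~\ref{IIE}: verify the ``if'' direction by direct computation, and for ``only if'' extract $B$ and $Q$ from the action of $L$ and $\Gamma$ on degree-one monomials, then induct on the degree. However, your extraction of $Q$ contains a false step: you assert that $\Gamma(\langle g,\cdot\rangle,\langle g,\cdot\rangle)$ vanishes on Dirac masses ``by the derivation property''. This is not true in general. For instance, $Q(g\otimes g)(x,y)=g(x)g(y)$ defines a valid operator in \eqref{IIeq:T:Lpol} (with $B=0$), and one checks that the resulting $\Gamma$ is an $M_1(E)$-derivation, yet $\Gamma(\langle g,\cdot\rangle,\langle g,\cdot\rangle)(\delta_x)=g(x)^2\ne0$. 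The derivation property alone says nothing about values at Dirac masses, and the theorem imposes no diagonal condition on $Q$.

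The correct argument, which also sidesteps your detour about showing $c_g=0$, is simply to invoke Corollary~\ref{IIrem5}: since $\Gamma(\langle g,\cdot\rangle,\langle h,\cdot\rangle)$ is a polynomial of degree at most~$2$ on $M_1(E)$, it has a unique homogeneous representative $\langle Q(g\otimes h),\nu^2\rangle$; any lower-order terms are absorbed into the coefficient via $1=\langle 1,\nu\rangle$. Likewise $L(\langle g,\cdot\rangle)(\nu)=\langle Bg,\nu\rangle$ on $M_1(E)$ with any constant absorbed. With this fix your induction goes through; the paper carries it out via the recursion
\[
L(q^{k+1})=2qL(q^k)-q^2L(q^{k-1})+q^{k-1}\Gamma(q,q)
\]
(obtained by applying the derivation identity to $\Gamma(q\cdot q^{k-1},q)$), which confines the bookkeeping to powers of a single linear monomial $q(\nu)=\langle g,\nu\rangle$ rather than general products.
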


An analogue of Theorem \ref{IIT:Lpol} holds for $L$ being $\s$-polynomial, where $\s$ is an arbitrary subset of $M(E)$; see Theorem~\ref{IIT:Lpol gen}.

\begin{example}[The Fleming-Viot generator]\label{IIex2}
Let $E=\R$ and $D=C_\Delta^2(\R)$. The Fleming--Viot diffusion was introduced by \cite{FV:79} and subsequently studied by several other authors. This process takes values in $M_1(\R)$, and its generator $L$ acts on polynomials $p\in P^D$ by
\[
Lp(\nu) = \int_E B(\partial p(\nu) )(x) \nu(dx) + \frac{1}{2} \int_{E^2} \partial_{xy}^2 p(\nu) \nu(dx) (\delta_x(dy) - \nu(dy) ), \quad \nu\in M_1(E),
\]
where $B g:=\frac{1}{2} \sigma^2g''$ for some $\sigma\in \R$.
This is an $M_1(\R)$-polynomial operator of the form \eqref{IIeq:T:Lpol}, where $Q=\Psi$ as defined in~\eqref{IIPhi(g)}.
For more details, see Chapter 10.4 of \cite{EK:05}.
\end{example}

Corollary~\ref{IIrem5} states that any polynomial on $M_1(E)$ has a unique homogeneous representative. 
Therefore, an operator $L$ satisfying \eqref{IIeq:T:Lpol} actually maps any monomial $\langle g,\nu^k\rangle$ to a unique monomial $\langle h,\nu^k\rangle$ on $M_1(E)$. This induces an operator $L_k$ acting on the corresponding coefficients by $L_kg:= h$. The operators $ L_1,L_2,\ldots$ are the key objects needed to compute conditional moments of polynomial diffusions corresponding to $L$.

\begin{definition} \label{IIeq:L on Dk}
Let $L\colon P^D\to P$ satisfy \eqref{IIeq:T:Lpol}. The \emph{$k$-th dual operator} of $L$ is defined as the unique linear operator $L_k\colon D^{\otimes k}\to\widehat C_\Delta (E^k)$ determined by
\begin{equation}\label{eqn15}
Lp(\nu)=\langle L_kg, \nu^k\rangle,\qquad\nu\in M_1(E),
\end{equation}
for every $p(\nu)=\langle g,\nu^k\rangle$ with $g\in D^{\otimes k}$.
\end{definition}

Because of \eqref{IIeq:T:Lpol}, the $k$-th dual operator $L_k$ can be written
\begin{equation}\label{eqn4}
L_k=kB\otimes\id^{\otimes (k-1)}+\frac {k(k-1)} 2 Q\otimes \id^{\otimes (k-2)},
\end{equation}
where the tensor notation $B_1\otimes\ldots\otimes B_N$ is used to denote the linear operator from $D^{\otimes k}$ to $\widehat C_\Delta(E^{k})$ determined by $(B_1\otimes\ldots\otimes B_N)(g^{\otimes k}):=B_1(g^{\otimes n_1})\otimes\ldots\otimes B_N(g^{\otimes n_N})$ for given linear operators $B_i\colon D^{\otimes n_i}\to \widehat C_\Delta (E^{n_i})$ with $n_1+\cdots+n_N=k$. 
More explicitly, we have
\[
L_k=B_k+ Q_k
\]
where $B_k$ and $Q_k$ are defined by
\begin{equation}\label{IIeqn27}
B_k g:=\sum_{i=1}^kB^{(i)} g\qquad\text{and}\qquad Q_kg:=\frac 1 2 \sum_{i,j=1}^kQ^{(ij)}g
\end{equation}
for $B^{(i)}g(x) := B g(\ldots,x_{i-1},\fdot,x_{i+1},\ldots)(x_i)$ and 
$$Q^{(ij)}g(x) := Q\big(g(\ldots,x_{i-1},\fdot,x_{i+1},\ldots,x_{j-1},\fdot,x_{j+1},\ldots)\big)(x_i,x_j).
$$

\begin{remark}\label{IIrem4}
Observe that without the existence of an homogeneous representative (guaranteed by Corollary~\ref{IIrem5}), expression \eqref{eqn15} would read
$$Lp(\nu)=\langle L^k_kg, \nu^k\rangle+\langle L_k^{k-1}g, \nu^{k-1}\rangle+\cdots+L_k^0g,\qquad\nu\in M_1(E),$$
and the $k$-th  dual operator would thus consist in a $(k+1)$-tuple of operators $L_k^k,\ldots, L_k^0$. In the context of the moment formula, as stated in Theorem \ref{IIthm:new} below, the PIDE of \eqref{IIeq:PIDE} would then translate to a system of $(k+1)$ PIDEs. If one is interested in studying jump-diffusions taking value in other subspaces of $M(E)$, as e.g.~$M_+(E)$,  a homogeneous representative can no longer be found and one has to deal with systems of PIDEs to compute the moments.
\end{remark}

\section{Existence and uniqueness of polynomial diffusions on $M_1(E)$} \label{sec_ex_un}

Let $E$ be a locally compact Polish space, $D$ a dense linear subspace of $C_\Delta (E)$ containing the constant function $1$, and $L\colon P^D\to P$ a linear operator. In this section we study existence and uniqueness of $M_1(E)$-valued polynomial diffusions, and derive the moment formula.

An $M_1(E)$-valued process $X$ with c\`adl\`ag paths defined on some filtered probability space {$(\Omega, \Fcal,(\Fcal_t)_{t\geq0}, \P)$} is called a {\em solution to the martingale problem for $L$} with initial condition $\nu\in M_1(E)$ if $X_0=\nu$ $\P$-a.s.~and
\begin{equation}\label{IIeqnN}
N^p_t = p(X_t) - p(X_0) - \int_0^t Lp(X_s) ds
\end{equation}
defines a martingale for every $p\in P^D$. Uniqueness of solutions to the martingale problem is always understood in the sense of law. The martingale problem for $L$ is {\em well--posed} if for every $\nu\in M_1(E)$ there exists a unique $M_1(E)$-valued solution to the martingale problem for $L$ with initial condition~$\nu$. We are interested in solutions with continuous paths (with respect to the topology of weak convergence) corresponding to polynomial operators.

\begin{definition}
Let $L$ be $M_1(E)$-polynomial. Any continuous solution to the martingale problem for $L$ is called a
\emph{probability-valued polynomial diffusion}.
\end{definition}

The following lemma relates path continuity of solutions to the martingale problem to the carr\'e-du-champ operator being a derivation. This explains why we consider derivations in Theorem~\ref{IIT:Lpol}.

\begin{lemma}\label{IIlem5}
If the carr\'e-du-champ operator $\Gamma$ of $L$ is an $M_1(E)$-derivation, then any solution to the martingale problem for $L$ has continuous paths. Conversely, if for every initial condition $\nu\in M_1(E)$ there is a solution to the martingale problem for $L$ with continuous paths, then the carr\'e-du-champ operator $\Gamma$ associated to $L$ is an $M_1(E)$-derivation.
\end{lemma}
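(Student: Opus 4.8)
The plan is to establish the two implications separately, in both cases working through the martingales $N^p$ ($p\in P^D$) from \eqref{IIeqnN} and the elementary observation that, for any solution $X$ and any $p,q\in P^D$, the process $[N^p,N^q]_t-\int_0^t\Gamma(p,q)(X_s)\,ds$ is a local martingale; this follows from It\^o's integration-by-parts formula applied to $p(X)q(X)$, the martingale property of $N^{pq}$ (recall $P^D$ is an algebra, so $pq\in P^D$), and the definition \eqref{IIeq:Gamma} of $\Gamma$. Taking $q=p$ gives in particular $[N^p]_t=\int_0^t\Gamma(p,p)(X_s)\,ds+C^p_t$ with $C^p$ a local martingale. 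I will also use freely that every element of $P$ is bounded on $M_1(E)$ (so the relevant stochastic integrals are well behaved), and that $D$ is convergence determining for $M_1(E)$ because it is dense in $C_\Delta(E)=C(E^\Delta)$, which separates the points of $M_1(E^\Delta)\supseteq M_1(E)$. Consequently $X$ has continuous paths as soon as $t\mapsto\langle g,X_t\rangle=p(X_t)$ is continuous for $p(\nu):=\langle g,\nu\rangle$ and $g$ in a countable dense subset of $D$ (such a subset exists, since $C(E^\Delta)$ is separable).

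For ``$\Gamma$ a derivation $\Rightarrow$ continuity'', the guiding picture is the pure-jump case $Lf(\nu)=\int(f(\xi)-f(\nu))\,K(\nu,d\xi)$: there $\Gamma$ is a derivation exactly when $\int(\Delta p)(\Delta q)(\Delta r)\,K(\nu,\cdot)=0$ for all $p,q,r$, and feeding in the triple $(p,p,p^2)$ with $p(\nu)=\langle g,\nu\rangle$, together with the all-linear relation $\int(\Delta p)^3K=0$, forces $\int(\Delta p)^4\,K=0$, i.e.\ no jumps. I would reproduce this at the level of the martingale problem. Fix $p\in P^D$, write $p(X_t)=p(X_0)+N^p_t+\int_0^tLp(X_s)\,ds$, and apply It\^o's formula with the maps $y\mapsto y^3$ and $y\mapsto y^4$ (writing $\Delta N^p_s:=N^p_s-N^p_{s-}$). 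Comparing with the martingale property of $N^{p^3}$ and $N^{p^4}$, substituting $d[N^p]_s=\Gamma(p,p)(X_s)\,ds+dC^p_s$, and using the identities
\[
L(p^3)=3p^2\,Lp+3p\,\Gamma(p,p),\qquad L(p^4)=4p^3\,Lp+6p^2\,\Gamma(p,p)\quad\text{on }M_1(E)
\]
— which are obtained by iterating the derivation property (they amount to $\Gamma(p^2,p)=2p\,\Gamma(p,p)$ and $\Gamma(p^3,p)=3p^2\,\Gamma(p,p)$ on $M_1(E)$) — one checks that the drift terms cancel, so that $R^p_t:=\sum_{s\le t}(\Delta N^p_s)^3$ is a local martingale, and then, using in addition $\Delta N^{p^2}_s=(\Delta N^p_s)^2+2p(X_{s-})\,\Delta N^p_s$ and that $\int_0^\cdot p(X_{s-})\,dR^p_s$ is a local martingale, that $F^p_t:=\sum_{s\le t}(\Delta N^p_s)^4$ is a local martingale. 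Being nonnegative, nondecreasing and zero at $t=0$, $F^p$ is then identically zero, so $N^p$, and hence $p(X)$, has continuous paths; specializing to $p(\nu)=\langle g,\nu\rangle$ with $g$ in a countable dense subset of $D$ gives, via the reduction above, that $X$ has continuous paths.

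For the converse, fix $\nu\in M_1(E)$, let $X$ be a continuous solution with $X_0=\nu$, and note that each $N^p$ is then a continuous martingale, so $[N^p,N^q]$ is continuous; being a local martingale plus $\int_0^\cdot\Gamma(p,q)(X_s)\,ds$, it satisfies $[N^p,N^q]_t=\int_0^t\Gamma(p,q)(X_s)\,ds$. It\^o's formula for continuous semimartingales applied to $p(X)q(X)$ yields $N^{pq}_t=\int_0^tp(X_s)\,dN^q_s+\int_0^tq(X_s)\,dN^p_s$, hence for all $p,q,r\in P^D$
\[
\int_0^t\Gamma(pq,r)(X_s)\,ds=[N^{pq},N^r]_t=\int_0^t\big(p\,\Gamma(q,r)+q\,\Gamma(p,r)\big)(X_s)\,ds .
\]
Differentiating, $\big(\Gamma(pq,r)-p\,\Gamma(q,r)-q\,\Gamma(p,r)\big)(X_s)=0$ for Lebesgue-a.e.\ $s\ge0$, a.s. Picking $s_n\downarrow0$ along which this holds and letting $n\to\infty$, using $X_{s_n}\to\nu$ and the continuity of polynomials on $M_+(E)$ (Lemma~\ref{IIL:Psmooth}\ref{IIL:Psmooth:c1}), gives $\Gamma(pq,r)(\nu)=p(\nu)\Gamma(q,r)(\nu)+q(\nu)\Gamma(p,r)(\nu)$; since $\nu$ was arbitrary, $\Gamma$ is an $M_1(E)$-derivation.

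The main obstacle is the bootstrap in the first implication — passing from ``$R^p$ is a local martingale'' to ``$F^p$ is a local martingale''. This is precisely where the derivation hypothesis enters in an essential, genuinely nonlinear way, through the element $p^2\in P^D$, and it requires careful It\^o/quadratic-variation bookkeeping, in particular identifying the drift of $\sum_{s\le t}(\Delta N^p_s)^4$ and verifying that it vanishes thanks to $L(p^4)=4p^3Lp+6p^2\Gamma(p,p)$ on $M_1(E)$. The reduction to linear test functions and the entire converse direction are routine.
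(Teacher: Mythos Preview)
Your argument is correct in both directions. The paper's proof, however, is much shorter: it simply invokes Propositions~1 and~2 of Bakry and \'Emery (1985), which state precisely that for a martingale problem whose domain is an algebra, solutions have continuous paths if and only if the carr\'e-du-champ is a derivation. The reduction from ``$p(X)$ continuous for all $p\in P^D$'' to ``$X$ continuous in $M_1(E)$'' via linear monomials and density of $D$ is exactly what the paper does.

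What you have done is effectively rederive the Bakry--\'Emery result from scratch in this setting. Your bootstrap for the forward implication is correct: the derivation identity gives $L(p^3)=3p^2Lp+3p\,\Gamma(p,p)$ and $L(p^4)=4p^3Lp+6p^2\,\Gamma(p,p)$ on $M_1(E)$, and combining It\^o's formula for $y\mapsto y^3$ and $y\mapsto y^4$ with the martingale property of $N^{p^3}$, $N^{p^4}$ and the compensator relation $d[N^p]_s=\Gamma(p,p)(X_s)\,ds+d(\text{loc.\ mart.})$ indeed shows that $R^p=\sum(\Delta N^p)^3$ and then $F^p=\sum(\Delta N^p)^4$ are local martingales, the latter forcing $F^p\equiv0$. (One small simplification: for the passage to $F^p$ you do not actually need the relation $\Delta N^{p^2}=(\Delta N^p)^2+2p(X_-)\Delta N^p$; the $y\mapsto y^4$ computation together with $\int p(X_-)\,dR^p$ being a local martingale suffices.) Your converse is likewise correct, and slightly cleaner if you note that the integrand $\Gamma(pq,r)-p\Gamma(q,r)-q\Gamma(p,r)$ evaluated along $X$ is continuous in $s$, so vanishing of the integral for all $t$ gives vanishing at $s=0$ directly, without passing through a sequence $s_n\downarrow0$.

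The trade-off is clear: the paper's route is a two-line citation, while yours is self-contained and makes transparent exactly how the derivation hypothesis kills the jumps via the fourth-power sum.
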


\begin{proof}
Let $X$ be a solution to the martingale problem for $L$. By Proposition~2 in \cite{BE:85}, the real-valued process $p(X)$ is continuous for every $p\in P^D$, in particular for every linear monomial $p(\nu)=\langle h,\nu\rangle$ with $h\in D$. Since $D$ is dense in $C_\Delta (E)$, we can conclude that $X$ is continuous with respect to the topology of weak convergence on $M_1(E)$.

Conversely, if $X$ is a solution to the martingale problem for $L$ with continuous paths, then, by Lemma \ref{IIL:Psmooth}\ref{IIL:Psmooth:c1}, the map $t\mapsto p(X_t)$ is continuous for all $p\in P^D$. The result now follows by Proposition 1 in \cite{BE:85}.
\end{proof}

\subsection{Moment formula and uniqueness in law}\label{sec_moments_uniqueness}

Polynomial diffusions are of interest in applications because they generally satisfy a {\em moment formula}, which allows moments of the process to be computed tractably. If $E$ is a finite set, the moment formula always holds, but technical conditions, in particular on the dual operators, are needed in the general case. For details regarding operators and semigroups, we refer e.g.~to \cite{EK:05}.

\begin{theorem}\label{IIthm:new}
Suppose $L$ satisfies \eqref{IIeq:T:Lpol} and fix $k\in\N$. Assume that the $k$-th dual operator $L_k$ is closable, and let $g$ be in the domain of its closure $\overline L_k$.
 Suppose that there is a solution $u\colon\R_+\times E^k\to\R$ of 
\begin{equation}\label{IIeq:PIDE}
\begin{aligned}
\frac{\partial u}{\partial t}(t,x) &= \overline L_k u(t,\fdot)(x),	&&\qquad (t,x)\in\R_+\times E^k,\\
u(0,x)&= g(x),									&&\qquad x\in E^k,
\end{aligned}
\end{equation}
and suppose that $\sup_{t\in[0,T]} \|\overline L_k u(t,\fdot)\|<\infty$ for all $T\in \R_+$. In particular, $u(t,\fdot)$ is assumed to be in the domain of $\overline L_k$ for all $t\geq0$. Then for any continuous solution $X$ to the martingale problem for $L$, one has the moment formula
\begin{equation}\label{eqn3}
\E\big[\langle g, X^k_T\rangle \mid \Fcal_t \big]=\langle u(T-t, \cdot), X^k_t\rangle.
\end{equation}
\end{theorem}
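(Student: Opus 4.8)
The plan is to show that the process $t \mapsto \langle u(T-t,\cdot), X_t^k\rangle$ is a martingale on $[0,T]$; the moment formula then follows by taking conditional expectations and using the initial condition $u(0,\cdot)=g$ at $t=T$. Fix $T$ and for $t\in[0,T]$ write $p_t(\nu) := \langle u(T-t,\cdot),\nu^k\rangle$. Since $u(T-t,\cdot)$ lies in the domain of $\overline L_k$ (by assumption) this is a well-defined polynomial of degree $k$, and by Definition~\ref{IIeq:L on Dk} and the characterization \eqref{eqn15}, applying $L$ to the ``frozen-time'' polynomial gives $L p_t(\nu) = \langle \overline L_k u(T-t,\cdot),\nu^k\rangle$ for $\nu\in M_1(E)$ — here I need the (routine) fact that Theorem~\ref{IIthm:new}'s hypotheses let one extend \eqref{eqn15} from $g\in D^{\otimes k}$ to $g$ in the domain of the closure $\overline L_k$, by passing to limits and using that $L$ respects degree.

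The core computation is to differentiate $t\mapsto p_t(X_t)$ using a time-dependent version of the martingale problem. Concretely, I would first establish that for a smooth time-dependent family $p_t\in P^D$, the process $p_t(X_t) - p_0(X_0) - \int_0^t\big(\dot p_s(X_s) + Lp_s(X_s)\big)ds$ is a martingale; this is the standard extension of \eqref{IIeqnN} and follows by approximating $p_t$ by step functions in $t$ (piecing together the defining martingales $N^p$ over short intervals) together with the integrability bound — this is where the uniform estimate $\sup_{t\le T}\|\overline L_k u(t,\cdot)\| < \infty$ enters, guaranteeing that $Lp_s(X_s)$ is bounded uniformly (since $X_s\in M_1(E)$ and $|\langle h,\nu^k\rangle|\le\|h\|$ for probability measures) so the $ds$-integral and the localizing argument are legitimate. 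Now with $p_t(\nu)=\langle u(T-t,\cdot),\nu^k\rangle$ we have, by the chain rule and the PIDE \eqref{IIeq:PIDE}, $\dot p_t(\nu) = -\langle \partial_t u(T-t,\cdot),\nu^k\rangle = -\langle \overline L_k u(T-t,\cdot),\nu^k\rangle = -Lp_t(\nu)$ on $M_1(E)$, so the drift term $\dot p_s(X_s)+Lp_s(X_s)$ vanishes identically (as $X_s\in M_1(E)$), leaving $p_t(X_t) - p_0(X_0)$ a martingale.

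From the martingale property, $\E[p_T(X_T)\mid\Fcal_t] = p_t(X_t)$, i.e. $\E[\langle g, X_T^k\rangle\mid\Fcal_t] = \langle u(T-t,\cdot), X_t^k\rangle$, which is exactly \eqref{eqn3}.

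**Main obstacle.** The delicate point is the passage from the defining martingale problem, which only supplies the martingales $N^p$ for fixed $p\in P^D$, to the time-dependent statement for $p_t(\nu)=\langle u(T-t,\cdot),\nu^k\rangle$ where $u(T-t,\cdot)$ need not lie in $D^{\otimes k}$ but only in the domain of the closure $\overline L_k$. This requires (i) an approximation argument in the time variable to handle $t$-dependence, and more subtly (ii) a closure/approximation argument in the spatial variable: choosing $g_n\in D^{\otimes k}$ with $g_n\to g$ and $L_k g_n\to\overline L_k g$, transferring the identity \eqref{eqn15} and the martingale property to the limit, and controlling everything uniformly on $[0,T]$ via the supremum bounds. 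The continuity of $X$ (needed so that $t\mapsto p(X_t)$ is continuous, cf.\ Lemma~\ref{IIL:Psmooth}\ref{IIL:Psmooth:c1}) and the uniform bound on $\overline L_k u(t,\cdot)$ are precisely what make these limiting arguments go through; keeping track of them carefully is the bulk of the work.
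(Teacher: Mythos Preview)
Your approach is correct and is a standard ``pathwise'' variant of what the paper does. The paper instead works at the level of expectations: for fixed $t\le T$ and $A\in\Fcal_t$ it introduces the two-variable function
\[
f(s_1,s_2):=\E\big[\langle u(s_1,\fdot),X_{t+s_2}^k\rangle 1_A\big],
\]
observes that the PIDE gives $\partial_{s_1}f(s_1,s_2)=\E[\langle\overline L_k u(s_1,\fdot),X_{t+s_2}^k\rangle 1_A]$ while the (closure-extended) martingale problem gives the \emph{same} expression for $\partial_{s_2}f(s_1,s_2)$, and then invokes Lemma~4.4.10 in Ethier--Kurtz (the ``diagonal lemma'') to conclude $f(T-t,0)=f(0,T-t)$, which is exactly \eqref{eqn3} tested against $1_A$. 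Your argument is the pathwise unpacking of this: you show $s\mapsto\langle u(T-s,\fdot),X_s^k\rangle$ is itself a martingale by building a time-dependent Dynkin formula and cancelling the drift via the PIDE. Both routes rest on the same two ingredients you correctly isolate---extending \eqref{eqn15} from $D^{\otimes k}$ to $\mathrm{dom}(\overline L_k)$ by closure, and the uniform bound $\sup_{t\le T}\|\overline L_k u(t,\fdot)\|<\infty$---but the paper's route buys you a clean reference instead of the step-function-in-time approximation you would otherwise have to carry out by hand.
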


\begin{proof}
We will follow the proof of Theorem 4.4.11 in \cite{EK:05} and extend it to obtain also the formula for the \emph{conditional} moments. Fix $T\in \R_+$, $t\in [0,T]$, and $A\in \Fcal_t$.
Define for all $(s_1,s_2)\in[0,T-t]\times [0,T-t]$ define 
$f(s_1, s_2) := \E[\langle u(s_1,\fdot), X_{t+s_2}^k \rangle  1_A]$.
Fix $s_2\in[0,T-t]$. Equation \eqref{IIeq:PIDE} and the fundamental theorem of calculus then yield 
$$
f(s_1, s_2) -f(0, s_2)
=\E[
\langle   u(s_1,\fdot)-u(0,\fdot), X_{t+s_2}^k \rangle  1_A]=\int_0^{s_1} \E[
\langle   \overline L_ku(s,\fdot), X_{t+s_2}^k \rangle 1_A]ds.
$$
Fix then $s_1\in[0,T-t]$. Since $u(t,\fdot)$ is in the domain of $\overline L_k$ for all $t\in \R_+$, \eqref{IIeqnN} yields
\begin{align*}
f(s_1, s_2) -f(s_1, 0)
&=\E[\E[
\langle   u(s_1,\fdot), X_{t+s_2}^k \rangle -\langle   u(s_1,\fdot), X_{t}^k \rangle 
|\Fcal_t] 1_A]\\
&=\int_0^{s_2} \E[
\langle   \overline L_ku(s_1,\fdot), X_{t+s}^k \rangle 1_A]ds.
\end{align*}
Since $\sup_{s_1,s_2\in[0,T-t]} \big|\E[
\langle   \overline L_ku(s_1,\fdot), X_{t+s_2}^k \rangle 1_A]\big|
\leq \sup_{s_1\in[0,T]}\| \overline L_ku(s_1,\fdot)\|<\infty$, we can then conclude that both $f(\fdot, s_2)$ and  $f(s_1, \fdot)$ are absolutely continuous with bounded derivatives. Lemma 4.4.10 in \cite{EK:05} then yields $f(T-t,0)-f(0,T-t)=0$, and the result follows.
\end{proof}

In order to avoid confusion, for the rest of the section we denote by $u_g$ the solution of \eqref{IIeq:PIDE} with initial condition $u_g(0,\fdot)=g$.

In most of the cases of interest (see Remark~\ref{IIrem:maxprinciple}\ref{IIrem10} below) the operator $L_k$ satisfies the positive maximum principle on $E^k$, for each $k\in \N$. If this is the case, the existence of a solution $u_g$ of \eqref{IIeq:PIDE} satisfying the conditions of Theorem~\ref{IIthm:new} for sufficiently many $g$, is essentially equivalent to the fact that $\overline L_k$ generates a strongly continuous positve contraction semigroup on $\widehat C_\Delta(E^k)$ or in other words that it is the generator of a Feller process on $E^k$. We state this precisely in the following remark.

\begin{remark}\label{rem2}
Let $L$ satisfy \eqref{IIeq:T:Lpol} and  let $X$ denote a solution to the corresponding martingale problem with initial condition $X_0=\nu\in M_1(E)$.
Assume that the corresponding $k$-th dual operator $L_k$ satisfies the positive maximum principle on $(E^\Delta)^k$ (which in particular implies that $L_k$ is closable), for each $k\in \N$. 

Let $D_0$ be a dense subset of the domain of $\overline L_k$ and suppose that the conditions of Theorem~\ref{IIthm:new} hold true for all $g\in D_0$. 
By Proposition~1.3.4 of \cite{EK:05}, if we additionally have that
$t\mapsto \overline L_ku_g(t,\fdot)$ is continuous,
then $\overline L_k$ is the generator of a strongly continuous contraction semigroup $\{Y^k_t\}_{t\geq0}$ on $\widehat C_\Delta(E^k)$ and $Y_t^kg=u_g(t, \fdot)$. In this case the moment formula reads as
\[
\E\big[\langle g, X^k_T\rangle \mid \Fcal_t \big]=\langle Y_{T-t}^kg, X^k_t\rangle, \quad \text{for all } g \in \widehat C_\Delta(E^k).
\]
Conversely, if $\overline L_k$ is the generator of a strongly continuous contraction semigroup $\{Y^k_t\}_{t\geq0}$ on $\widehat C_\Delta(E^k)$, then for all $g$ in the domain of $\overline L_k$ the map $u_g(t,x):=Y^k_t g(x)$ satisfies the conditions of Theorem~\ref{IIthm:new}. By the Hille--Yosida theorem, this is for instance the case if the range of $\lambda-L_k$ is dense in $\widehat C_\Delta(E^k)$ for some $\lambda>0$.
In this case, Corollary 4.2.8 in \cite{EK:05} yields a solution $Z^{(k)}$ (without loss of generality defined on the same probability space as $X$) to the martingale problem for $L_k$ with values in $(E^\Delta)^k$ and satisfying  $Y_t^kg(x)=\E[g(Z_t^{(k)})|Z_0^{(k)}=x]$. The moment formula then yields
\begin{equation}\label{IIeq:momentformulaZ}
\E[g(Z_t^{(k)})|Z_0^{(k)}\sim \nu^k]= \E[ \langle g,X_t^k \rangle].
\end{equation}
This gives an alternative interpretation to \eqref{eqn3}, namely that the PIDE in \eqref{IIeq:PIDE} is the Feynman-Kac PIDE associated to the $k$-dimensional process Markov process $Z^{(k)}$.  Note that in the case of a finite state space $E$, \eqref{IIeq:PIDE} reduces to an ODE and $\overline L_k$ is automatically the generator of $k$-dimensional Markov chain.
\end{remark}

As in the finite-dimensional case, the moment formula yields well--posedness of the martingale problem.

\begin{corollary} \label{IIcor:uniqueness}
Suppose $L$ satisfies \eqref{IIeq:T:Lpol}, and let $X$ be a continuous solution to the martingale problem for $L$ with initial condition $\nu\in M_1(E)$. If the moment formula \eqref{eqn3} holds for all $g\in D^{\otimes k}$ and $k\in\N$, then the law of $X$ is uniquely determined by $L$ and $\nu$.
\end{corollary}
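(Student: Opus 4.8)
The plan is to deduce uniqueness in law from the moment formula via a standard monotone-class / measure-determining argument. The key observation is that the law of an $M_1(E)$-valued continuous process $X$ is determined by the finite-dimensional distributions $(X_{t_1},\ldots,X_{t_n})$, and each such distribution (a law on $M_1(E^\Delta)^n$, a compact metric space) is in turn determined by the expectations $\E[\prod_{j=1}^n p_j(X_{t_j})]$ where the $p_j$ range over $P^D$, since by Lemma~\ref{IIIlem1} the span of products of elements of $P^D$ is dense in $C(M_1(E^\Delta)^n)$ (Stone--Weierstrass: this algebra contains the constants, separates points, and is closed under multiplication because $P^D$ is an algebra). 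So it suffices to show that these mixed moments are uniquely determined by $L$ and $\nu$.

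\medskip

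I would carry this out by induction on $n$, the number of time points, and within each step by a backward induction over $t_n > t_{n-1} > \cdots > t_1$. For $n=1$ and $t_1 = t$, any $p\in P^D$ is a finite linear combination of monomials $\langle g,\nu^k\rangle$ with $g\in D^{\otimes k}$ (using $1 = \langle 1,\nu^0\rangle$ on $M_1(E)$ and the homogeneous-representative structure), so by hypothesis $\E[p(X_t)] = \sum_k \langle u_{g_k}(t,\cdot),\nu^k\rangle$, which depends only on $L$ (through $\overline L_k$ and hence $u_{g_k}$) and on $\nu$. For the inductive step, write $\E[\prod_{j=1}^n p_j(X_{t_j})]$ and condition on $\Fcal_{t_{n-1}}$: since $p_n(X_{t_n})$ is a linear combination of $\langle g, X_{t_n}^k\rangle$, the conditional moment formula \eqref{eqn3} gives $\E[p_n(X_{t_n})\mid\Fcal_{t_{n-1}}]$ as a polynomial in $X_{t_{n-1}}$ with coefficients $u_g(t_n - t_{n-1},\cdot)$ determined by $L$ — crucially this conditional expectation is again an element of $P$ evaluated at $X_{t_{n-1}}$, in fact (restricting to $M_1(E)$) of the same form $\sum_k\langle u_{g_k}(t_n-t_{n-1},\cdot), X_{t_{n-1}}^k\rangle$. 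Multiplying by $p_{n-1}(X_{t_{n-1}})$ and using that $P$ is an algebra, the product $p_{n-1}(X_{t_{n-1}})\cdot\E[p_n(X_{t_n})\mid\Fcal_{t_{n-1}}]$ is again a polynomial $\widetilde p \in P$ in $X_{t_{n-1}}$; then $\E[\prod_{j=1}^{n-1}p_j(X_{t_j})\,\widetilde p(X_{t_{n-1}})]$ is a mixed moment over $n-1$ time points of the required type, so the induction hypothesis applies. One technical point to keep tidy: $\widetilde p$ need not lie in $P^D$ (the coefficients $u_g(t,\cdot)$ of the intermediate polynomial need not be in $D^{\otimes k}$), but the induction hypothesis about $(n-1)$-point moments should be formulated for arbitrary $p_j \in P$, which is harmless since the $n=1$ case and the passage from $\Fcal_{t_{n-1}}$-conditioning only ever require the moment formula \eqref{eqn3}, assumed for all $g \in D^{\otimes k}$, together with the fact that $D^{\otimes k}$ is dense in $\widehat C_\Delta(E^k)$ and moments depend continuously on the coefficient. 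In fact it is cleanest to first upgrade the hypothesis: show \eqref{eqn3} holds for all $g\in\widehat C_\Delta(E^k)$ by a density/approximation argument (the left side is continuous in $g$ uniformly, and $u_g$ depends linearly and boundedly on $g$ in sup norm on bounded time intervals), so that the class of coefficients is closed under the operations above.

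\medskip

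The main obstacle I anticipate is precisely this bookkeeping around which function class the coefficients live in after conditioning: verifying that $x\mapsto u_g(t,x)$, and products and tensor products thereof, remain in a class to which the moment formula still applies, and that nothing about closability or domains of $\overline L_k$ breaks. This is resolved by the density argument sketched above — approximating a general symmetric coefficient by elements of $D^{\otimes k}$ and passing to the limit using the uniform bound $\sup_{t\le T}\|\overline L_k u(t,\cdot)\| < \infty$ and the contraction/continuity estimates implicit in Theorem~\ref{IIthm:new} — rather than by any genuinely new idea. Everything else (the monotone-class reduction to finite-dimensional distributions, Stone--Weierstrass density of products of $P^D$-elements on the compact space $M_1(E^\Delta)^n$, the backward induction via tower property) is routine once the conditional moment formula is in hand.
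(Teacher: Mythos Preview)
Your approach is workable but takes a considerably longer route than the paper. The paper's proof is two sentences: the (unconditional) moment formula gives $\E[\langle g, X_T^k\rangle]=\langle u_g(T,\cdot),\nu^k\rangle$ for all $g\in D^{\otimes k}$ and $k\in\N$, so by Lemma~\ref{IIIlem1} the one-dimensional distributions of $X$ on the compact space $M_1(E^\Delta)$ are determined by $L$ and $\nu$; then Theorem~4.4.2 in \cite{EK:05}---the general fact that for martingale problems, agreement of one-dimensional distributions forces agreement in law---finishes the argument. Your backward induction over time points is essentially a hands-on re-derivation of this Ethier--Kurtz theorem in the present setting.

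The obstacle you correctly flag (that the coefficients $u_g(t,\cdot)$ produced by conditioning need not lie in $D^{\otimes k}$) is real, but your proposed fix has a gap: asserting that ``$u_g$ depends linearly and boundedly on $g$ in sup norm'' amounts to extending the right-hand side of \eqref{eqn3} to all of $\widehat C_\Delta(E^k)$, and nothing in the hypotheses of the corollary guarantees this---Theorem~\ref{IIthm:new} assumes existence of a suitable $u$ for each given $g$, not any uniform or continuous dependence. The cleaner workaround, still within your scheme, is to formulate the induction hypothesis for two solutions $X,X'$ and approximate the last coefficient $g$ by $g_m\in D^{\otimes k}$ \emph{before} invoking the conditional moment formula: then both sides reduce (via the same polynomial $\widetilde p_m\in P$, determined by $L$) to an $(n-1)$-point expression to which the induction hypothesis applies, and one takes $m\to\infty$ in the bounded expectation at the very end. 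This works, but is precisely the bookkeeping that the cited Ethier--Kurtz black box absorbs; the paper's route avoids the issue entirely and never needs the conditional form of \eqref{eqn3}.
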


\begin{proof}
By the moment formula \eqref{eqn3} we have $\E[\langle g, X^k_T\rangle]=\langle u_g(T,\fdot), \nu^k\rangle$ for all $k\in\N$ and  $g\in D^{\otimes k}$.
Since $g\mapsto u_g$ is determined by  $L$, Lemma~\ref{IIIlem1} yields that  the one-dimensional distributions of $X$ are uniquely determined by $L$ and $\nu$. The conclusion follows by Theorem~4.4.2 in \cite{EK:05}.
\end{proof}

\subsection{Existence and well-posedness}\label{IIs40}

Our first main result of this section gives abstract sufficient conditions for existence of solutions to the martingale problem. Applications of this result are discussed in Section \ref{IIsec:examples}. Recall that $E$ is throughout a locally compact Polish space.

\begin{theorem}\label{IImainthm5}
Let $D\subseteq C_\Delta(E)$ be a dense linear subspace containing the constant function~$1$. Let $L\colon P^D\to P$ be a linear operator satisfying~\eqref{IIeq:T:Lpol}, where
\begin{enumerate}
\item\label{it7} $B$ is $E$-conservative and satisfies $B1=0$,
\item $Q$ is given by
\[
Q(g) = \cC  \Psi(g) + \sum_{i=1}^n (A_i\otimes A_i)(g), \qquad g\in D\otimes D,
\]
where $\cC \colon E^2\to\R$ is a nonnegative symmetric function and, for $i=1,\ldots,n$, $A_i$ is the generator of a strongly continuous group of positive isometries of $C_\Delta(E)$, and the domain of $A_i$ contains both $D$ and $A_i(D)$,
\item\label{itiii} $B - \frac12\sum_{i=1}^n A_i^2$ satisfies the positive maximum principle on $E^\Delta$.
\end{enumerate}
Then $L$ is $M_1(E)$-polynomial and its martingale problem has a solution with continuous paths for every initial condition $\nu\in M_1(E)$.
If in addition the moment formula \eqref{eqn3} holds for all $g\in D^{\otimes k}$ and $k\in\N$, then the martingale problem for $L$ is well--posed.
\end{theorem}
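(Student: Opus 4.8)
The plan is the following. Since $L$ is assumed to have the form~\eqref{IIeq:T:Lpol}, Theorem~\ref{IIT:Lpol} immediately gives that $L$ is $M_1(E)$-polynomial and that its carr\'e-du-champ operator is an $M_1(E)$-derivation; by Lemma~\ref{IIlem5} \emph{every} solution of the martingale problem for $L$ therefore automatically has continuous paths. Hence it suffices to (a)~construct a solution for each $\nu\in M_1(E)$, and (b)~under the extra moment-formula hypothesis, invoke Corollary~\ref{IIcor:uniqueness} for uniqueness in law, which together with (a) yields well-posedness. The core of (a) is to show that, when the elements of $P^D$ are viewed as functions on the \emph{compact} space $M_1(E^\Delta)$ (via Lemma~\ref{IIL:Psmooth}\ref{IIL:Psmooth:c1} and the identification $C_\Delta(E)\cong C(E^\Delta)$, under which $D$, $B$, and $Q=\cC\Psi+\sum_iA_i\otimes A_i$ all act on subspaces of $C(E^\Delta)$ resp.\ $C((E^\Delta)^2)$), the operator $L$ satisfies the positive maximum principle on $M_1(E^\Delta)$.

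To prove this I would fix $p\in P^D$ and $\nu_*\in M_1(E^\Delta)$ with $p(\nu_*)=\max_{M_1(E^\Delta)}p$. Replacing $p$ by $p+\lambda\langle 1,\fdot\rangle$ with $\lambda:=-\sup_E\partial p(\nu_*)\in\R$ — which, because $B1=0$ and $\partial^2\langle1,\fdot\rangle=0$, leaves both $Lp(\nu_*)$ and $\partial^2 p(\nu_*)$ unchanged while shifting $\partial p(\nu_*)$ by the constant $\lambda$ — one may assume $\sup_E\partial p(\nu_*)=0$. Write $g:=\partial p(\nu_*)\in D$ and $h:=\partial^2 p(\nu_*)\in D\otimes D$ (Lemma~\ref{IIIlem1}). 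Since~\eqref{IIeq:T:Lpol} is an identity between polynomials on $M_1(E)$, it passes to $M_1(E^\Delta)$ (Corollary~\ref{IIrem5} and Lemma~\ref{IIL:Psmooth}\ref{IIL:Psmooth:c1}), so $Lp(\nu_*)=\langle Bg,\nu_*\rangle+\tfrac12\langle Q(h),\nu_*^2\rangle$. Substituting the form of $Q$ and applying Theorem~\ref{IIL:KKT2} to each $A_i$ with $\mu=\nu_*$ (admissible by assumption~(ii)), which gives $\langle(A_i\otimes A_i)(h),\nu_*^2\rangle\le-\langle A_i^2 g,\nu_*\rangle$, one obtains
\[
Lp(\nu_*)\ \le\ \Big\langle\big(B-\tfrac12\sum_{i=1}^n A_i^2\big)g,\ \nu_*\Big\rangle+\tfrac12\langle \cC\,\Psi(h),\nu_*^2\rangle .
\]
By Theorem~\ref{IIL:KKT}\ref{IIL:KKT:(ii)} we have $\Psi(h)\le 0$ on $\supp(\nu_*)\times\supp(\nu_*)$, so the second term is $\le 0$ because $\cC\ge 0$. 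For the first term, Theorem~\ref{IIL:KKT}\ref{IIL:KKT:(i)} gives $g\equiv\sup_E g=0$ on $\supp(\nu_*)$; as $g\in C_\Delta(E)$ with $\sup_E g=0$ one also gets $\max_{E^\Delta}g=0$, so $g$ attains its (nonnegative) maximum over $E^\Delta$ at every point of $\supp(\nu_*)$, and assumption~(iii) gives $(B-\tfrac12\sum_i A_i^2)g\le 0$ there. Integrating against $\nu_*$ shows the first term is $\le 0$, hence $Lp(\nu_*)\le 0$.

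Given the positive maximum principle: since $M_1(E^\Delta)$ is compact Polish, $P^D$ is dense in $C(M_1(E^\Delta))$ (Lemma~\ref{IIIlem1}), and $L1=0$, the standard existence theory for martingale problems on compact spaces (Theorem~4.5.4 of \cite{EK:05}, recalled in Section~\ref{app_existence}) produces, for every $\nu\in M_1(E^\Delta)$, a solution $X$ of the martingale problem for $L$ with values in $M_1(E^\Delta)$; by the derivation property and Lemma~\ref{IIlem5} it has continuous paths. When $\nu\in M_1(E)$ one must further rule out escape of mass to $\Delta$: testing~\eqref{IIeqnN} against the monomials $p_n(\nu)=\langle f_n,\nu\rangle$, where $f_n\in D\cap C_0(E)$ are the functions furnished by the $E$-conservativity of $B$ and $Lp_n=\langle Bf_n,\fdot\rangle$, and taking expectations, one deduces from $f_n\to 1$ on $E$, $(Bf_n)^-\to 0$, $B1=0$ and $\nu(E)=1$ that $\E[X_t(\{\Delta\})]=0$ for all $t$; the argument in Section~\ref{app_existence} then upgrades $X$ to an $M_1(E)$-valued solution, proving the existence statement. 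Finally, if the moment formula~\eqref{eqn3} holds for all $g\in D^{\otimes k}$, $k\in\N$, then Corollary~\ref{IIcor:uniqueness} shows any continuous solution has law determined by $L$ and its initial condition; since all solutions are continuous, the martingale problem for $L$ is well-posed.

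The main obstacle I anticipate is the verification of the positive maximum principle on $M_1(E^\Delta)$ — in particular, combining the three optimality conditions of Theorems~\ref{IIL:KKT} and~\ref{IIL:KKT2} correctly and handling the bookkeeping between $E$ and $E^\Delta$ and the extension of~\eqref{IIeq:T:Lpol}. The other delicate point is the conservativity-based passage from $M_1(E^\Delta)$-valued to $M_1(E)$-valued solutions, which is precisely why assumption~(i) is imposed (and the reason $M_1(E)$ not being locally compact forces one to work on $M_1(E^\Delta)$ first).
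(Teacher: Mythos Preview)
Your proposal is correct and follows essentially the same route as the paper's proof: reduce existence to the positive maximum principle on $M_1(E^\Delta)$ via Lemma~\ref{IIIlem8}, verify that principle by combining the first/second order conditions of Theorem~\ref{IIL:KKT} (for the $\cC\Psi$ and $B-\tfrac12\sum_i A_i^2$ parts) with the group-isometry condition of Theorem~\ref{IIL:KKT2} (for the $A_i\otimes A_i$ parts), and then invoke Corollary~\ref{IIcor:uniqueness} for well-posedness. The only cosmetic differences are that you normalize $\sup_E\partial p(\nu_*)=0$ by adding a multiple of $\langle 1,\fdot\rangle$ and apply Theorem~\ref{IIL:KKT2} before the maximum principle for $B-\tfrac12\sum_i A_i^2$, whereas the paper applies them in the opposite order; both orderings are valid and yield the same inequality.
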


    Note that \eqref{IIeq:T:Lpol} imposes the implicit condition on $\cC $ that $\cC \Psi(g)$ must lie in $\widehat C_\Delta(E^2)$ for every $g\in D\otimes D$. If $D=C_\Delta(E)$, then $\cC $ is necessarily bounded, as is seen from Theorem~\ref{IImainthm3} below. However, this does not hold for general $D\subseteq C_\Delta(E)$, as one can see by considering $E=\R$, $D\subseteq C^1_\Delta(\R)$, and $\cC(x,y)=|x-y|^{-1}1_{\{x\neq y\}}$.

\begin{proof}
Theorem~\ref{IIT:Lpol} shows that $L$ is $M_1(E)$-polynomial. Lemma~\ref{IIIlem8} yields existence of a solution to the martingale problem for any initial condition (necessarily with continuous paths due to Lemma~\ref{IIlem5}) once we check that $L$ satisfies the positive maximum principle on $M_1(E^\Delta)$. Let therefore $\nu_*\in M_1(E^\Delta)$ be a maximizer of $p\in P^D$ over $M_1(E^\Delta)$. The optimality conditions in Theorem~\ref{IIL:KKT} yield
\[
\partial_xp(\nu_*)=\sup_E\partial p(\nu_*)\quad\text{and}\quad \Psi\big(\partial^2 p(\nu_*)\big)({x,y})\leq 0, \quad x,y\in\supp(\nu_*).
\]
Therefore, since $B - \frac12\sum_{i=1}^n A_i^2$ satisfies the positive maximum principle and $\cC $ is nonnegative, we get
\[
Lp(\nu_*) \le \frac12 \sum_{i=1}^n \left( \langle A_i^2(\partial p(\nu_*)), \nu_*\rangle + \langle (A_i\otimes A_i)(\partial^2p(\nu_*)),\nu_*^2\rangle\right).
\]
The optimality condition in Theorem~\ref{IIL:KKT2} now yields $Lp(\nu_*)\le0$. This proves the positive maximum principle and thus the existence statement. The assertions regarding the moment formula and well--posedness follow from Theorem~\ref{IIthm:new} and Corollary~\ref{IIcor:uniqueness}.
\end{proof}

\begin{remark} \label{IIrem:maxprinciple}
\begin{enumerate}
\item \label{it2}

With regard to item \ref{itiii} in Theorem \ref{IImainthm5}, note that a linear operator $\Gcal: D\to C_\Delta(E)$ satisfies the positive maximum principle on $E^\Delta$ if and only if $\Gcal$ satisfies the positive maximum principle on $E$ and $\Gcal g (\Delta)\geq 0$ for every nonnegative $g\in C_0(E)\cap D$. In many cases of interest, for instance  $E\subseteq\R^d$ and $D\subseteq \R+C_c(E)$, the positive maximum principle on $E$ implies the positive maximum principle on $E^\Delta$.

\item\label{it6} Let us also remark, that the $k$-th dual operator $\mathcal{G}_k$ associated to $\langle \mathcal{G} (\partial p(\nu)), \nu \rangle$ satisfies the positive maximum principle on $(E^\Delta)^k$ if it holds for $\mathcal{G}$ on $E^{\Delta}$. Indeed, if $x^{\ast} \in (E^\Delta)^k$ is a maximum of $g$, then $x_i^{\ast}$ is a maximum of $g(\ldots, x_{i-1}^*, \fdot,x_{i+1}^*,\ldots)$. Hence $\mathcal{G}_k$ given by
\[
\mathcal{G}_kg=k \Gcal\otimes \id^{\otimes  (k-1)}g=\sum_{j=1}^k \mathcal{G}^{(j)}g,
\]
where we use the same notation as in \eqref{IIeqn27}, clearly satisfies the positive maximum principle on $(E^\Delta)^k$.

\item \label{IIrem10}
Consider the setting and the assumptions of Theorem~\ref{IImainthm5} and define
\begin{align*}
G_k&:=k\Big(B- \frac12\sum_{i=1}^n A_i^2\Big)\otimes \id^{\otimes (k-1)},\qquad
C_k:=\frac {k(k-1)}2 (\cC  \Psi)\otimes \id^{\otimes(k-2)},\\ 
T_k&:=k\Big(\frac12\sum_{i=1}^n A_i^2\Big)\otimes\id^{\otimes(k-1)}+\frac{k(k-1)}2\Big(\sum_{i=1}^n (A_i\otimes A_i)\Big)\otimes \id^{\otimes(k-2)}.
\end{align*}
Note that by \eqref{eqn4} we have $L_k=G_k+C_k+T_k$. We claim that $G_k$, $C_k$, $T_k$, and hence $L_k$, satisfy the positive maximum principle on $(E^\Delta)^k$.

By item \ref{itiii}  in Theorem \ref{IImainthm5}, $B- \frac12\sum_{i=1}^n A_i^2$ satisfies the positive maximum principle on $E^\Delta$, whence by \ref{it6} it holds also for $G_k$ on  $(E^\Delta)^k$. The form of $\Psi$ and the nonnegativity of $\cC $ guarantee that this is also the case for $C_k$.  Finally, since $T_k=\sum_{i=1}^n \frac 12 (\sum_{j=1}^n A^{(j)}_i)^2$ where
$A_i^{(j)}g(x) = A_ig(\ldots , x_{jâ-1},\fdot, x_{j+1}, \ldots)(x_j),$
 Remark~\ref{IIrem7} yields the positive maximum principle on $(E^\Delta)^k$ also for $T_k$ and thus all together for $L_k$. 
\end{enumerate}
\end{remark}

The following result gives a useful condition for uniqueness when all the operators $A_i$ are zero. Due to Lemma~\ref{IIL:triv_iso} this happens, for instance, if $D=C_\Delta(E)$ and in particular if $E$ consists of finitely many points. An example where uniqueness holds when those operators are not all zero is given in Example~\ref{ex1}.

\begin{lemma}\label{IIlem3}
Consider setting and assumptions of Theorem~\ref{IImainthm5}, and assume that $A_i=0$ for all $i$. Assume additionally that $\cC $ is bounded and  $B$ is closable and its closure is the generator of a strongly continuous contraction semigroup on $C_\Delta(E)$. Then the moment formula \eqref{eqn3} holds for all $g\in \widehat C_{\Delta}(E^k)$ and $k\in\N$.
\end{lemma}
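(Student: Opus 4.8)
The plan is to show, for every $k\in\N$, that the closure $\overline L_k$ of the $k$-th dual operator generates a strongly continuous contraction semigroup $\{Y^k_t\}_{t\ge0}$ on $\widehat C_\Delta(E^k)$, and then to read off the moment formula from Theorem~\ref{IIthm:new} (in the form of Remark~\ref{rem2}) together with a density argument. Since $A_i=0$ for all $i$, \eqref{eqn4} and \eqref{IIeqn27} give $L_k=B_k+C_k$ on $D^{\otimes k}$, where $B_k=\sum_{i=1}^kB^{(i)}=kB\otimes\id^{\otimes(k-1)}$ and $C_k=\tfrac{k(k-1)}{2}(\cC\Psi)\otimes\id^{\otimes(k-2)}$. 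Because $\cC$ is bounded and $\Psi$ acts by pointwise evaluation, $C_k$ is bounded in the supremum norm on $D^{\otimes k}$ and extends to a bounded operator on $\widehat C_\Delta(E^k)$ (using that $D^{\otimes k}$ is dense there, by polarization and Stone--Weierstrass).

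The core of the argument concerns $B_k$. First I would note that $\overline B$ generates a conservative Feller semigroup $\{S_t\}$ on $C_\Delta(E)\cong C(E^\Delta)$: by assumption $\overline B$ generates a strongly continuous contraction semigroup, and since $B$ satisfies the positive maximum principle on $E^\Delta$ and $B1=0$ (the assumptions of Theorem~\ref{IImainthm5} with $A_i=0$), Theorem~4.2.2 in \cite{EK:05}, applied to $B$ on $D$, shows this semigroup is positive with $S_t1=1$. I would then pass to the $k$-fold tensor power: with $\xi^1,\dots,\xi^k$ independent copies of the Feller process having transition semigroup $\{S_t\}$, the process $(\xi^1,\dots,\xi^k)$ is Feller on $(E^\Delta)^k$, its transition semigroup $S^{\otimes k}_t$ satisfies $S^{\otimes k}_t(h^{\otimes k})=(S_th)^{\otimes k}$, and by exchangeability it leaves $\widehat C_\Delta(E^k)$ invariant; let $\mathcal B_k$ denote the generator of the resulting strongly continuous contraction semigroup on $\widehat C_\Delta(E^k)$. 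A multilinear Taylor expansion of $t\mapsto(S_th)^{\otimes k}$ at $t=0$ gives $\mathcal B_k(h^{\otimes k})=\sum_{i=1}^kh^{\otimes(i-1)}\otimes Bh\otimes h^{\otimes(k-i)}=B_k(h^{\otimes k})$ for $h\in D$, so $\mathcal B_k$ extends $B_k|_{D^{\otimes k}}$; and approximating symmetrized products $f_1\otimes\cdots\otimes f_k$ with each $f_i$ in the domain of $\overline B$ by symmetrized tensor powers of elements of $D$ (using that $D$ is a core for $\overline B$, together with polarization) shows that $D^{\otimes k}$ is a core for $\mathcal B_k$. Hence $\overline B_k=\mathcal B_k$ generates a strongly continuous contraction semigroup on $\widehat C_\Delta(E^k)$. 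This core identification is the step I expect to be the main obstacle, although it is routine for tensor powers of Feller semigroups.

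With this in hand, the bounded perturbation theorem shows that $\overline L_k=\overline B_k+C_k$ (for which $D^{\otimes k}$ remains a core, $C_k$ being bounded) generates a strongly continuous semigroup, so in particular the range of $\lambda-L_k$ is dense in $\widehat C_\Delta(E^k)$ for large $\lambda$. Since $L_k$ also satisfies the positive maximum principle on $(E^\Delta)^k$ by Remark~\ref{IIrem:maxprinciple}\ref{IIrem10}, Theorem~4.2.2 in \cite{EK:05} upgrades the semigroup to a positive one, which is moreover a contraction semigroup $\{Y^k_t\}$ since $L_k1=0$. Remark~\ref{rem2} now applies: for every $g$ in the domain of $\overline L_k$, the function $u_g(t,\cdot):=Y^k_tg$ satisfies \eqref{IIeq:PIDE}, stays in the domain of $\overline L_k$, and obeys $\sup_{t\in[0,T]}\|\overline L_ku_g(t,\cdot)\|=\sup_{t\in[0,T]}\|Y^k_t\overline L_kg\|\le\|\overline L_kg\|<\infty$, so Theorem~\ref{IIthm:new} gives the moment formula \eqref{eqn3} for every such $g$.

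It remains to extend \eqref{eqn3} from the domain of $\overline L_k$ to all of $\widehat C_\Delta(E^k)$. Here I would choose $g_n$ in the domain of $\overline L_k$ with $g_n\to g$ uniformly (the domain is dense, being that of a generator), write down \eqref{eqn3} for each $g_n$, and let $n\to\infty$: since $X^k_t$ and $X^k_T$ are probability measures, $|\langle g-g_n,X^k_T\rangle|\le\|g-g_n\|$ and $|\langle Y^k_{T-t}(g-g_n),X^k_t\rangle|\le\|Y^k_{T-t}(g-g_n)\|\le\|g-g_n\|$ by contractivity, so both sides converge and \eqref{eqn3} holds for $g$. This would complete the proof.
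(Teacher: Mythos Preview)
Your proof is correct and follows essentially the same route as the paper's: both decompose $L_k=B_k+Q_k$ with $Q_k$ bounded (using $\|\cC\|<\infty$), identify $B_k$ with the restriction of the generator of the tensor-power semigroup $\{(Y^1_t)^{\otimes k}\}$ on $\widehat C_\Delta(E^k)$, invoke the bounded perturbation theorem (Theorem~1.7.1 and Corollary~1.7.2 in \cite{EK:05}) to conclude that $\overline L_k$ generates a strongly continuous contraction semigroup, and finish via Remark~\ref{rem2} and Theorem~\ref{IIthm:new}. Your version is simply more explicit about the core property of $D^{\otimes k}$ and the density extension to all of $\widehat C_\Delta(E^k)$, points the paper leaves implicit.
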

Since $B$ satisfies the positive maximum principle on $E^\Delta$ by Theorem~\ref{IImainthm5}\ref{itiii}, the Hille--Yosida theorem guarantees that the conditions of the lemma are satisfied whenever $\lambda-B$ has dense range in $C_\Delta(E)$ for some $\lambda>0$.

\begin{proof}
Let $\{Y^1_t\}_{t\ge0}$ be the semigroup corresponding to $\overline B$. Fix any $k\in\N$ and let $B_k$ and $Q_k$ be as in \eqref{IIeqn27}. It is straightforward to check that $B_k$ is the restriction to $D^{\otimes k}$ of the generator of the strongly continuous contraction semigroup $\{\overline{(Y^1_t)^{\otimes k}}\}_{t\ge0}$ on $\widehat C_\Delta(E^k)$. Moreover, one has the estimate
\[
\|Q_kg\| \le {k(k-1)}\|\cC \| {\|g\|}, \quad g\in\widehat C_\Delta(E^k),
\]
whence $Q_k$ is a bounded operator. It follows as in Theorem~1.7.1 and Corollary~1.7.2 in \cite{EK:05} that $L_k=B_k+Q_k$ is closable and its closure is the generator of a strongly continuous contraction semigroup on $\widehat C_\Delta(E^k)$. By Remark~\ref{rem2} and Theorem~\ref{IIthm:new} the result follows.
\end{proof}

While Theorem~\ref{IImainthm5} only gives sufficient conditions for existence, the result is sharp. Indeed, we now show that if $D=C_\Delta(E)$, no other polynomial specifications exist. For instance, this is the case if $E$ is a finite set. The following theorem, which is our second main result of this section, makes this precise. The proof is given in Section~\ref{appB}.

\begin{theorem}\label{IImainthm3}
Let $D=C_\Delta(E)$ and let $L\colon P^D\to P$ be a linear operator. Then $L$ is $M_1(E)$-polynomial, its martingale problem is well posed, and all solutions have continuous paths, if and only if $L$ satisfies~\eqref{IIeq:T:Lpol} with
\begin{equation} \label{IIeqn1_2}
Bg = \int \left( g(\xi)-g(\fdot)\right)\nu_B(\fdot,d\xi)\quad\text{and}\quad Qg = \cC  \Psi (g),
\end{equation}
where $\nu_B$ is a nonnegative, finite kernel from $E$ to $E$, and $\cC \colon(E^\Delta)^2\to\R$ is nonnegative, symmetric, bounded, and continuous on $(E^\Delta)^2\setminus\{x=y\}$. In this case, for each $k\in\N$ the $k$-th dual operator $L_k$ satisfies the hypothesis of Theorem~\ref{IIthm:new}, and the moment formula \eqref{eqn3} holds for all $g\in \widehat C_{\Delta}(E^k)$. Moreover, $B$ and $Q$, and hence each $L_k$, are bounded operators.
\end{theorem}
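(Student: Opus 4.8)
The plan is to prove the two implications of this characterization separately, leveraging the general machinery already established.

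\textbf{The ``if'' direction.} Suppose $L$ satisfies \eqref{IIeq:T:Lpol} with $B$ and $Q$ as in \eqref{IIeqn1_2}. The strategy is to reduce to Theorem~\ref{IImainthm5} and Lemma~\ref{IIlem3}. First I would verify the hypotheses of Theorem~\ref{IImainthm5}: the operator $B$ of the displayed jump form with $\nu_B$ a finite kernel is a bounded operator on $C_\Delta(E)$ satisfying $B1=0$ and the positive maximum principle on $E^\Delta$ (for $f$ with $f(x)=\sup f\ge0$ one has $\int(f(\xi)-f(x))\nu_B(x,d\xi)\le0$; the value at $\Delta$ is handled by continuity of the extension, or by noting $\nu_B$ is a kernel on $E$), hence $E$-conservative; and $Q=\cC\Psi$ is exactly the form required with $n=0$ and all $A_i=0$, the boundedness and continuity-off-the-diagonal of $\cC$ ensuring $\cC\Psi(g)\in\widehat C_\Delta(E^2)$. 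Condition \ref{itiii} of Theorem~\ref{IImainthm5} is then just the positive maximum principle for $B$ itself. So existence of a continuous solution follows. For uniqueness and the moment formula, since $B$ is bounded it is trivially closable and (being a bounded operator satisfying the positive maximum principle with $B1=0$, hence dissipative and conservative) generates a strongly continuous contraction semigroup on $C_\Delta(E)$; $\cC$ is bounded; so Lemma~\ref{IIlem3} applies and gives the moment formula for all $g\in\widehat C_\Delta(E^k)$, whence well-posedness via Corollary~\ref{IIcor:uniqueness}. Boundedness of each $L_k=B_k+Q_k$ follows from \eqref{IIeqn27} together with the bounds $\|B_kg\|\le k\|B\|\,\|g\|$ and $\|Q_kg\|\le k(k-1)\|\cC\|\,\|g\|$ as in the proof of Lemma~\ref{IIlem3}.

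\textbf{The ``only if'' direction.} Assume $L$ is $M_1(E)$-polynomial with a well-posed martingale problem whose solutions are all continuous. By Lemma~\ref{IIlem5}, continuity of solutions forces the carr\'e-du-champ $\Gamma$ to be an $M_1(E)$-derivation, so Theorem~\ref{IIT:Lpol} gives the representation \eqref{IIeq:T:Lpol} with uniquely determined $B\colon C_\Delta(E)\to C_\Delta(E)$ and $Q\colon C_\Delta(E)\otimes C_\Delta(E)\to\widehat C_\Delta(E^2)$. It remains to pin down the \emph{form} of $B$ and $Q$. The key input is that well-posedness of the $M_1(E^\Delta)$-valued martingale problem forces $L$ to satisfy the positive maximum principle on $M_1(E^\Delta)$ (this is the standard equivalence reviewed in Section~\ref{app_existence}, used in Lemma~\ref{IIIlem8}). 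I would then feed carefully chosen test polynomials into this maximum principle. Applying it to linear polynomials $p(\nu)=\langle g,\nu\rangle$ shows that $B$ satisfies the positive maximum principle on $E^\Delta$ and $B1=0$ (take $g=1\pm$ small perturbation, or $g$ constant); since $B$ is everywhere-defined on $C_\Delta(E)$ it is bounded (closed graph, or the cited fact that such operators extend boundedly), and a bounded positive-maximum-principle operator with $B1=0$ has exactly the Lévy-type kernel form $Bg=\int(g(\xi)-g(\cdot))\nu_B(\cdot,d\xi)$ with $\nu_B$ a nonnegative finite kernel — this is precisely the content of the Courr\`ege-type Lemma~\ref{IIML_L1}/\eqref{IIeqML_L1} invoked elsewhere (e.g.\ in Lemma~\ref{IIL:triv_iso}), so I would cite it. For $Q$: testing the maximum principle on quadratic polynomials $p(\nu)=\langle g,\nu\rangle^2$ and using Theorem~\ref{IIL:KKT}\eqref{IIL:KKT:(ii)} (specifically the constraint \eqref{IIL:KKT:2} that $\Psi(\partial^2p(\nu_*))(x,y)\le0$ on the support) together with the freedom to choose the support of $\nu_*$, one extracts that $Q(g\otimes g)$ depends on $g$ only through $\Psi(g\otimes g)$ — i.e.\ $Q=\cC\Psi$ for some function $\cC$ — and that $\cC\ge0$; symmetry of $\cC$ is automatic from $Q$ mapping into $\widehat C_\Delta(E^2)$, and boundedness/continuity off the diagonal come from $Q$ being a well-defined map on all of $C_\Delta(E)\otimes C_\Delta(E)$ into $\widehat C_\Delta(E^2)$ (an unbounded or diagonal-singular $\cC$ would produce functions outside $\widehat C_\Delta(E^2)$). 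Finally, once the forms are established, the last sentence of the theorem (the $L_k$ satisfy the hypotheses of Theorem~\ref{IIthm:new}, the moment formula holds, and everything is bounded) follows exactly as in the ``if'' direction.

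\textbf{Main obstacle.} The routine parts — reducing ``if'' to Theorem~\ref{IImainthm5} and Lemma~\ref{IIlem3}, and the bookkeeping on $L_k$ — are straightforward. The delicate part is the ``only if'' direction, specifically extracting the precise form $Q=\cC\Psi$ with a \emph{bounded} $\cC$ continuous off the diagonal: one must show not merely that $Q(g\otimes g)$ is controlled by $\Psi(g\otimes g)$ pointwise (which the second-order optimality condition gives rather directly) but that this forces $Q$ to be literally multiplication by a fixed function $\cC$, and then that the codomain constraint $Q(D\otimes D)\subseteq\widehat C_\Delta(E^2)$ upgrades $\cC$ to a genuinely bounded function that is continuous away from $\{x=y\}$. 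Handling the behavior at the adjoined point $\Delta$ and making sure the ``full domain $D=C_\Delta(E)$'' hypothesis is used at exactly the right moments (it is what makes $B$ and $Q$ automatically bounded, and it is what the theorem trades away in the more flexible Theorem~\ref{IImainthm5}) is where the care is needed; I expect this is done in Section~\ref{appB} by combining the Courr\`ege lemma for $B$ with a separate argument isolating $\cC$ from $Q$ via the optimality conditions of Section~\ref{IIstool}.
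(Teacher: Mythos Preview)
Your ``if'' direction is correct and essentially matches the paper's proof: reduce to Theorem~\ref{IImainthm5} with all $A_i=0$, then invoke Lemma~\ref{IIlem3} for the moment formula and well-posedness.

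In the ``only if'' direction your broad outline is right, but there are two genuine gaps.

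\emph{First}, you do not address why $\nu_B$ is a kernel from $E$ to $E$ rather than from $E$ to $E^\Delta$. Lemma~\ref{IIML_L1} only gives a kernel into $E^\Delta$, and ruling out $\nu_B(x,\{\Delta\})>0$ requires an extra argument. The paper does this by first establishing (via the already-proved ``if'' direction) that the moment formula links $X$ to the Markov process $Z$ with generator $B$; if $\nu_B(x,\{\Delta\})>0$ then $Z$ started at $x$ reaches $\Delta$ with positive probability, which through \eqref{IIeq:momentformulaZ} forces $X_t(\{\Delta\})>0$, contradicting that $X$ is $M_1(E)$-valued. (Incidentally, the relevant lemma for ``existence $\Rightarrow$ positive maximum principle'' is Lemma~\ref{IIIlem9}, not~\ref{IIIlem8}, and it yields the principle on $M_1(E)$, not $M_1(E^\Delta)$.)

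\emph{Second}, your proposed route to $Q=\cC\Psi$ via Theorem~\ref{IIL:KKT}\ref{IIL:KKT:(ii)} applied to $p(\nu)=\langle g,\nu\rangle^2$ does not work: that theorem gives conditions on $\partial^2p(\nu_*)$, not on $Q$, and for this $p$ they merely say $g$ is constant on $\supp(\nu_*)$. The paper instead verifies the hypotheses of Lemma~\ref{IIL:form of Q NEW}: the inequality $\langle Q(g\otimes g),\nu^2\rangle\ge0$ comes from Lemma~\ref{IIlem1}, while the crucial equality when $g$ is constant on $\supp(\nu)$ is obtained by constructing test polynomials $p_n(\mu)=\langle g,\mu\rangle^2 F_n(\langle|g|,\mu\rangle)-\tfrac1n\langle|g|,\mu\rangle$ (using the auxiliary polynomials $F_n$ of Lemma~\ref{IIlem10}) which are $\le0$ on $M_1(E)$ with maximum at $\nu$, and then applying the positive maximum principle to get $\langle Q(g\otimes g),\nu^2\rangle\le\tfrac1n\langle B|g|,\nu\rangle\to0$. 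This construction---needed precisely because $D=C_\Delta(E)$ contains $|g|$---is the heart of the argument and is absent from your sketch. The boundedness of $\cC$ is then extracted by a separate contradiction argument: if $\cC(x_n,y_n)\to\infty$ one builds $g\in C_\Delta(E)$ with $(g(x_n)-g(y_n))^4=\cC(x_n,y_n)^{-1}$, making $\cC\Psi(g\otimes g)$ unbounded.
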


As in Theorem~\ref{IImainthm5}, condition \eqref{IIeq:T:Lpol} imposes implicit conditions on the different parameters. This is the case for the measure $\nu_B$, which in particular needs to satisfy $\int g(\xi)-g(\fdot) \nu_B(\fdot,d\xi)\in C_\Delta(E)$ for all $g\in C_\Delta(E)$. This is condition is clearly satisfied if the map from $E$ to $M_+(E)$ given by $x\mapsto \nu_B(x,\fdot)$ is continuous. However the converse fails to be true as one can see by considering the following kernel
$$\nu_B(x,d\xi)=\delta_{\phi(x)} 1_{\{\phi(x)\neq x\}},$$
for some continuous $\phi:E\to E$ such that $\phi\neq \id$.

\begin{corollary}\label{cor2}
Let $D\subseteq C_\Delta(E)$ be a dense linear subspace containing the constant function 1 and let $L$ satisfy~\eqref{IIeq:T:Lpol} with $B$ and $Q$ as in Theorem~\ref{IImainthm3}. Then $L$ is $M_1(E)$-polynomial, its martingale problem is well--posed, and all solutions have continuous paths. Moreover, the moment formula \eqref{eqn3} holds for all $g\in D^{\otimes k}$ and $k\in\N$.
\end{corollary}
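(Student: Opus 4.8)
The plan is to deduce Corollary~\ref{cor2} from Theorem~\ref{IImainthm3} by a restriction argument, using the fact that the operator $\widetilde L$ obtained by taking the same formulas \eqref{IIeqn1_2} but with the \emph{full} domain $D=C_\Delta(E)$ is a well-behaved reference operator whose properties are already known. Concretely, let $\widetilde L\colon P^{C_\Delta(E)}\to P$ be the linear operator defined by \eqref{IIeq:T:Lpol} with the given $B$ and $Q$. Then $L$ is simply the restriction of $\widetilde L$ to the subspace $P^D\subseteq P^{C_\Delta(E)}$ (this requires noting that the formula \eqref{IIeq:T:Lpol} makes sense on $P^D$, i.e. that $Q$ indeed maps $D\otimes D$ into $\widehat C_\Delta(E^2)$; but since $\cC$ is bounded and continuous off the diagonal and $\Psi(g)$ vanishes on the diagonal and lies in $\widehat C_\Delta(E^2)$ for $g\in D\otimes D$, the product $\cC\Psi(g)$ is in $\widehat C_\Delta(E^2)$, exactly as in Theorem~\ref{IImainthm3}). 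First I would record that $\widetilde L$ satisfies all the conclusions of Theorem~\ref{IImainthm3}: it is $M_1(E)$-polynomial, its martingale problem is well-posed with continuous paths, and the moment formula \eqref{eqn3} holds for all $g\in\widehat C_\Delta(E^k)$ and all $k$.

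Next I would observe that $L$ is automatically $M_1(E)$-polynomial: for $p\in P^D\subseteq P^{C_\Delta(E)}$ we have $Lp=\widetilde Lp$, and $\widetilde L$ is $M_1(E)$-polynomial, so there is $q\in P$ with $q|_{M_1(E)}=\widetilde Lp|_{M_1(E)}=Lp|_{M_1(E)}$ and $\deg q\le\deg p$. Similarly the $k$-th dual operator of $L$ is the restriction to $D^{\otimes k}$ of the $k$-th dual operator $\overline{\widetilde L}_k$ of $\widetilde L$, which by Theorem~\ref{IImainthm3} is bounded (hence closable) and satisfies the hypotheses of Theorem~\ref{IIthm:new}; in particular its closure is the generator of a strongly continuous contraction semigroup $\{Y^k_t\}$ on $\widehat C_\Delta(E^k)$, and for $g\in D^{\otimes k}$ the function $u_g(t,\fdot):=Y^k_t g$ solves \eqref{IIeq:PIDE} and satisfies the supremum bound $\sup_{t\le T}\|\overline L_k u_g(t,\fdot)\|<\infty$ (because $\|\overline L_k Y^k_t g\|=\|Y^k_t \overline L_k g\|\le\|\overline L_k g\|<\infty$). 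Hence Theorem~\ref{IIthm:new} applies and gives the moment formula \eqref{eqn3} for all $g\in D^{\otimes k}$ and all $k\in\N$, which is the last assertion of the corollary.

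It then remains to prove existence and uniqueness of the martingale problem for $L$, together with path continuity. For existence: any solution to the martingale problem for $\widetilde L$ (which exists by Theorem~\ref{IImainthm3} for every initial condition $\nu\in M_1(E)$, with continuous paths) is a fortiori a solution to the martingale problem for $L$, since $P^D\subseteq P^{C_\Delta(E)}$ and $L=\widetilde L|_{P^D}$; and continuity is inherited. For uniqueness: Corollary~\ref{IIcor:uniqueness} applies directly, because we have just verified that the moment formula \eqref{eqn3} holds for $L$ for all $g\in D^{\otimes k}$ and $k\in\N$; hence the law of any continuous solution is determined by $L$ and $\nu$. Finally, path continuity of \emph{every} solution follows from Lemma~\ref{IIlem5}: it suffices to check that the carr\'e-du-champ operator $\Gamma$ of $L$ is an $M_1(E)$-derivation, and this is immediate since on $P^D$ the operator $L$ coincides with $\widetilde L$, so $\Gamma$ coincides on $P^D\times P^D$ with the carr\'e-du-champ operator of $\widetilde L$, which is an $M_1(E)$-derivation because $\widetilde L$ falls under Theorem~\ref{IIT:Lpol} (the diffusion form \eqref{IIeq:T:Lpol} with $A_i$-type and $\Psi$-type second-order part is precisely the derivation case). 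Combining these three points yields that the martingale problem for $L$ is well-posed and all solutions are continuous.

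I do not anticipate a serious obstacle: the whole argument is a bookkeeping exercise confirming that restricting the domain of the operator preserves the $M_1(E)$-polynomial property, the derivation property of the carr\'e-du-champ, the existence of solutions, and — crucially — the moment formula, the last of which feeds into Corollary~\ref{IIcor:uniqueness} to give uniqueness. The one point requiring a little care is verifying that the hypotheses of Theorem~\ref{IIthm:new} (closability of $L_k$ and the existence of a solution $u_g$ with the uniform bound on $\overline L_k u_g$) transfer from $\widetilde L$ to $L$; but since the $k$-th dual operators of $L$ and $\widetilde L$ agree on $D^{\otimes k}$ and $\overline{\widetilde L}_k$ is bounded, the closure $\overline L_k$ is just the restriction/extension compatible with $\overline{\widetilde L}_k$, and the semigroup $\{Y^k_t\}$ supplies the required solution $u_g$ for every $g\in D^{\otimes k}\subseteq\widehat C_\Delta(E^k)$.
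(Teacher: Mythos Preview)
Your proposal is correct and follows essentially the same approach as the paper's proof: define the extension $\widetilde L$ on $P^{C_\Delta(E)}$ using the same $B$ and $Q$ (which the paper phrases as ``$L$ can be uniquely extended to $P^{C_\Delta(E)}$'' since each $L_k$ is bounded), apply Theorem~\ref{IImainthm3} to $\widetilde L$, and transfer the conclusions back to $L$ by restriction. Your write-up simply unpacks the transfer of each property (polynomiality, existence, continuity, moment formula, uniqueness) in more detail than the paper's two-line argument.
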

\begin{proof}
Since by Theorem~\ref{IImainthm3}  each $L_k$ is bounded, the operator $L$ can be uniquely extended to $P^{C_\Delta(E)}$. The result then follows by the same theorem.
\end{proof}

The last main result of this section characterizes  probability-valued polynomial martingales. An $M_1(E)$-valued process $X$ is called a martingale if $\langle g, X\rangle$ is a martingale for every $g\in C_\Delta(E)$. Note that, unlike Theorem~\ref{IImainthm5}, the conditions are both necessary and sufficient, regardless of the choice of domain~$D$.

\begin{theorem}\label{IIthm1}
Let $D\subseteq C_\Delta(E)$ be a dense linear subspace containing the constant function~$1$. Let $L\colon P^D\to P$ be a linear operator. Then $L$ is $M_1(E)$-polynomial, its martingale problem has a solution for any initial condition, and every solution is a martingale with continuous paths, if and only if $L$ satisfies \eqref{IIeq:T:Lpol} with
\[
B=0\qquad\text{and}\qquad Q= \cC \Psi
\]
for some nonnegative symmetric function $\cC \colon E^2\to\R$. In this case, if in addition  $\cC $ is bounded, the martingale problem is well--posed.
\end{theorem}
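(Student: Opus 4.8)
The plan is to reduce this to the results already established. For the ``if'' direction, assume $L$ satisfies \eqref{IIeq:T:Lpol} with $B=0$ and $Q=\cC\Psi$ for a nonnegative symmetric $\cC$. First I would check that this is a special case of Theorem~\ref{IImainthm5}: take $n=0$ (no isometry operators $A_i$), so conditions~\ref{it7} and~\ref{itiii} both reduce to the statement that $B=0$ satisfies $B1=0$, is $E$-conservative, and satisfies the positive maximum principle on $E^\Delta$, all of which are trivially true for the zero operator. Condition~(ii) holds with $\cC$ the given nonnegative symmetric function. Hence Theorem~\ref{IImainthm5} gives that $L$ is $M_1(E)$-polynomial and the martingale problem has a continuous solution for every initial condition. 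To see that every solution is a martingale, apply \eqref{IIeqnN} to a linear monomial $p(\nu)=\langle g,\nu\rangle$ with $g\in D$: then $\partial p(\nu)=g$ is constant in $\nu$, and $Lp(\nu)=\langle B(g),\nu\rangle+\frac12\langle Q(0),\nu^2\rangle=0$ since $B=0$ and $\partial^2 p(\nu)=0$. Thus $\langle g, X_t\rangle$ is a martingale for every $g\in D$, and since $D$ is dense in $C_\Delta(E)$ and the monomials $\langle g, X_t\rangle$ are uniformly bounded (as $X_t\in M_1(E^\Delta)$, so $|\langle g, X_t\rangle|\le\|g\|$), a uniform approximation argument extends this to all $g\in C_\Delta(E)$; hence $X$ is a martingale by definition. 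Finally, if $\cC$ is bounded, well-posedness follows from Lemma~\ref{IIlem3}: the hypotheses there require $A_i=0$ (true), $\cC$ bounded (assumed), and $B$ closable with closure generating a strongly continuous contraction semigroup on $C_\Delta(E)$ --- but $B=0$ generates the identity semigroup, so this holds. Lemma~\ref{IIlem3} then yields the moment formula \eqref{eqn3} for all $g\in\widehat C_\Delta(E^k)$ and $k\in\N$, and Corollary~\ref{IIcor:uniqueness} gives uniqueness in law.

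For the ``only if'' direction, suppose $L$ is $M_1(E)$-polynomial, its martingale problem has a solution for every initial condition, and every such solution is a continuous martingale. Since every solution has continuous paths, Lemma~\ref{IIlem5} (the converse part) shows that the carr\'e-du-champ $\Gamma$ is an $M_1(E)$-derivation, so Theorem~\ref{IIT:Lpol} applies and $L$ satisfies \eqref{IIeq:T:Lpol} for some uniquely determined $B\colon D\to C_\Delta(E)$ and $Q\colon D\otimes D\to\widehat C_\Delta(E^2)$. It remains to show $B=0$ and that $Q$ has the form $\cC\Psi$ with $\cC$ nonnegative and symmetric. For $B=0$: apply the martingale property to a linear monomial $p(\nu)=\langle g,\nu\rangle$, $g\in D$. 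Then $Lp(\nu)=\langle B(g),\nu\rangle$ (the second-order term vanishes since $\partial^2 p=0$), and \eqref{IIeqnN} forces $\int_0^t\langle B(g),X_s\rangle\,ds$ to be the difference of a martingale and the martingale $\langle g, X_t\rangle-\langle g,X_0\rangle$, hence itself a continuous martingale of finite variation starting at zero, thus identically zero. So $\langle B(g), X_s\rangle=0$ for Lebesgue-a.e.\ $s$, a.s. Since this holds for the solution started from \emph{every} initial condition $\nu\in M_1(E)$, and by right-continuity we may evaluate at $s=0$, we get $\langle B(g),\nu\rangle=0$ for all $\nu\in M_1(E)$; as $M_1(E)$ spans a dense set in the appropriate sense (indeed $\langle B(g),\delta_x\rangle=B(g)(x)=0$ for every $x\in E$ gives it directly), we conclude $B(g)=0$ for all $g\in D$, i.e.\ $B=0$.

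For the structure of $Q$: with $B=0$, the generator reads $Lp(\nu)=\frac12\langle Q(\partial^2 p(\nu)),\nu^2\rangle$, and one computes from \eqref{IIeq:Gamma} that for linear monomials $p(\nu)=\langle g,\nu\rangle$, $q(\nu)=\langle h,\nu\rangle$ the carr\'e-du-champ is $\Gamma(p,q)(\nu)=\langle Q(g\otimes h),\nu^2\rangle$ (using $pq(\nu)=\langle g\otimes h,\nu^2\rangle$, $\partial^2(pq)(\nu)=2\,g\otimes h$, and $Lp=Lq=0$). The quadratic variation of the martingale $\langle g, X\rangle$ is then $\int_0^t\langle Q(g\otimes g), X_s^2\rangle\,ds$, which must be nondecreasing; evaluating at $X_0=\delta_x$ and using that the solution exists from every initial condition gives $Q(g\otimes g)(x,x)=\langle Q(g\otimes g),\delta_x^2\rangle\ge0$ --- but I expect the more useful consequence to come from testing against measures $X_0=\mu$ supported on two points. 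The real content is that $Q$ must annihilate ``constant-in-one-variable'' directions: the key is the positive maximum principle on $M_1(E^\Delta)$, which (by Lemma~\ref{IIIlem8}, invoked in the proof of Theorem~\ref{IImainthm5}) is equivalent to the existence of solutions. Applying it to carefully chosen polynomials $p\in P^D$ --- e.g.\ $p(\nu)=-\langle g,\nu\rangle^2$ for which any $\nu_*$ with $\langle g,\nu_*\rangle=0$ is a maximizer --- and the optimality conditions of Theorem~\ref{IIL:KKT}, one extracts that $\langle Q(g\otimes g),\mu^2\rangle$ depends only on $\Psi(g\otimes g)$; since $\Psi(g\otimes g)(x,y)=\frac12(g(x)-g(y))^2$, a polarization and density argument in $D\otimes D$ then yields $Q=\cC\Psi$ for some function $\cC\colon E^2\to\R$, with $\cC\ge0$ and symmetric forced by nonnegativity of the quadratic variation and symmetry of $Q$. \textbf{The main obstacle} I anticipate is precisely this last step: showing that $Q$ factors through $\Psi$ rather than being a general second-order form. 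This is the infinite-dimensional analogue of the classical fact that, on the simplex, the diffusion matrix of a process staying in $M_1$ must kill the direction $(1,\dots,1)$ and --- for the \emph{martingale} (drift-free, hence boundary-respecting in a strong sense) case --- must further be of the Fleming--Viot ``sampling'' type; carrying this out cleanly will require combining the first- and second-order optimality conditions of Theorem~\ref{IIL:KKT} with a good choice of test polynomials and handling the one-point compactification $E^\Delta$ with care.
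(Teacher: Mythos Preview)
Your ``if'' direction and the argument that $B=0$ are correct and essentially match the paper. One minor correction: to obtain the positive maximum principle from existence of solutions you want Lemma~\ref{IIIlem9} (existence $\Rightarrow$ PMP on $M_1(E)$), not Lemma~\ref{IIIlem8}, which runs in the opposite direction.

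The genuine gap is the structure of $Q$, which you correctly flag as the main obstacle but do not resolve. Your test polynomial $p(\nu)=-\langle g,\nu\rangle^2$ and the quadratic-variation argument yield only the inequality $\langle Q(g\otimes g),\nu^2\rangle\ge0$; neither this nor the second-order optimality conditions of Theorem~\ref{IIL:KKT} supplies the crucial complementary fact that $\langle Q(g\otimes g),\nu^2\rangle=0$ whenever $g$ is \emph{constant} on $\supp(\nu)$, which is exactly what Lemma~\ref{IIL:form of Q NEW} needs in order to output $Q=\cC\Psi$. The paper's key additional ingredient is a second test polynomial, $p(\mu):=\langle g,\mu\rangle^2-\langle g^2,\mu\rangle$. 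Jensen's inequality gives $p\le0$ on $M_1(E)$; if $g\equiv c$ on $\supp(\nu)$ then $p(\nu)=c^2-c^2=0$, so $\nu$ maximizes $p$; since $B=0$ and $\partial^2p(\nu)=2g\otimes g$, the positive maximum principle yields $\langle Q(g\otimes g),\nu^2\rangle=Lp(\nu)\le0$. Combined with the nonnegativity already obtained, this verifies hypothesis~\ref{IIL:form of Q NEW:2} of Lemma~\ref{IIL:form of Q NEW}, and that lemma then delivers $Q=\cC\Psi$ with $\cC\ge0$ symmetric.
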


\begin{proof}
To prove the forward implication, first note that Lemma \ref{IIlem5} and Theorem \ref{IIT:Lpol} imply that $L$ satisfies \eqref{IIeq:T:Lpol}. To see that $B=0$, pick any $g\in D$ and $x\in E$, and let $X$ be a solution to the martingale problem with initial condition $\delta_x$. Since $\langle g,X\rangle$ is a martingale, we have $\langle Bg,X\rangle=0$ and hence $Bg(x)=\langle Bg, X_0\rangle=0$. The form of $Q$ will follow from Lemma~\ref{IIL:form of Q NEW}. To verify its hypotheses, fix $g\in D$ and $\nu\in M_1(E)$, and define $p\in P^D$ by $p(\mu):=-(\langle g,\nu\rangle-\langle g,\mu\rangle)^2$. Then $\partial^2p(\nu)=-2g\otimes g$, $p\le0$, and $p(\nu)=0$, so the positive maximum principle yields
\[
-\langle Q(g\otimes g),\nu^2\rangle = Lp(\nu) \le 0.
\]
Next, fix $g\in D$ and $\nu\in M_1(E)$ such that $g$ is constant on the support of $\nu$. Define $p\in P^D$ by $p(\mu):=\langle g,\mu\rangle^2 - \langle g^2,\mu\rangle$. Then, again, $\partial^2p(\nu)=2g\otimes g$, and Jensen's inequality yields $p\le0$ and $p(\nu)=0$. Consequently,
\[
\langle Q(g\otimes g),\nu^2\rangle = Lp(\nu) \le 0.
\]
The form of $Q$ thus follows from Lemma~\ref{IIL:form of Q NEW}.

To prove the reverse implication, observe that existence of solutions to the martingale problem, along with path continuity, follows from Corollary~\ref{cor2}, as does well--posedness if in addition $\cC $ is bounded. Since $B=0$, it is clear that $\langle g,X\rangle$ is a martingale for every $g\in D$ and every solution $X$ to the martingale problem. This implies that $X$ is a martingale.
\end{proof}

\section{Examples and applications} \label{IIsec:examples}

\subsection{Finite underlying space}\label{unisimpl}

Let $E=\{1,\ldots,d\}$. Then $C_\Delta(E)=C(E)$ is finite-dimensional, so any dense linear subspace must equal the whole space. We therefore take $D=C(E)$. In this setting, any $M_1(E)$-valued process $X$ is of the form $X_t=\sum_{i=1}^d Z^i_t \delta_i$ for some $\Delta^d$-valued process $Z=(Z^1,\ldots,Z^d)$. When $X$ is a polynomial diffusion, Theorem~\ref{IImainthm3} describes its generator $L$ in terms of a kernel $\nu_B$ from $E$ to $E$ and a nonnegative symmetric function $\cC\colon E^2\to\R$. As we now show, the process $Z$ then also solves a martingale problem whose generator can be written down explicitly.

In view of Example~\ref{IIex4}, any polynomial $f$ on $\Delta^d$ can be represented as
\[
f(z) = p(z_1\delta_1 + \cdots + z_d\delta_d)
\]
for some $p\in P^D$. We may then define an operator $A$ acting on such polynomials $f$ by the formula
\[
Af(z) := Lp(z_1\delta_1 + \cdots + z_d\delta_d).
\]
Since $f(Z)=p(X)$ and $Af(Z)=Lp(X)$, it is clear that $Z$ is a solution to the martingale problem for $A$ with polynomials $f$ as test functions. Conversely, if a solution $Z$ to this martingale problem is given, a solution to the martingale problem for $L$ is obtained by setting $X:=\sum_{i=1}^d Z^i \delta_i$.

Next, a computation shows that $A$ has the form
\begin{equation}\label{IIeqn22}
\begin{aligned}
Af(z) &= \sum_{i,j=1}^d \nu_B(i,\{j\}) z_i \bigg(\frac{\partial f}{\partial z_j}(z)-\frac{\partial f}{\partial z_i}(z)\bigg)\\
&\qquad+\frac 1 2 \sum_{i,j=1}^d \cC (i,j)z_iz_j \bigg(\frac{\partial^2f}{\partial z_i^2}(z)+\frac{\partial^2 f}{\partial z_j^2}(z)-2\frac{\partial^2f}{\partial z_i\partial z_j}(z)\bigg).
\end{aligned}
\end{equation}
This can alternatively be written
$
A f(z)={b(z)}^\top\nabla f(z)+\frac 1 2 \tr\big(a(x)\nabla^2 f(x)\big),
$
where the coefficients $b$ and $a$ are given by
\begin{align*}
b_k(z)&:=\sum_{i=1}^{d}\big(\nu_B(i,\{k\})z_i-\nu_B(k,\{i\})z_k\big), && k=1,\ldots,d,\\
a_{k\ell}(z)&:=-\frac 12\cC (k,\ell)z_k z_\ell, && k,\ell=1,\ldots,d,\ k\ne\ell,
\end{align*}
and $a_{kk}(z)=-\sum_{\ell\ne k}a_{k\ell}(z)$. Here well-posedness was obtained by~\citet{FL:16}, which we thus recover as a special case. In particular, $Z$ is a polynomial diffusion on $\Delta^d$ in the sense of~\citet[Definition~2.1]{FL:16}. Furthermore, Theorem~\ref{IImainthm3} yields the moment formula for $X$, which reduces to the corresponding formula for $Z$ given by~\citet[Theorem~3.1]{FL:16}.

\subsection{Underlying space $E\subseteq \R^d$}\label{IIs41}

Let $E\subseteq\R^d$ be a closed subset and set
\[
D := \{f|_E\colon f \in \R+C^\infty_c(\R^d)\}.
\]
Our goal is to analyze Theorem~\ref{IImainthm5} in this setting. If $E$ is not all of $\R^d$, the dynamics of the spatial motion is restricted. Intuitively, its diffusion component must be tangential to the boundary of $E$. This is encoded as follows.
\begin{equation}\label{eqn2}
\Sigma^d(E):=\big\{\tau\in C^1_\Delta(\R^d,\R^{d\times d})\colon \text{$g\in D$, $x\in E$, $g(x)=\max_{E}g$ implies $\tau(x)^\top\nabla g(x)=0$}\big\}.
\end{equation}
Here $C^1_\Delta(\R^d,\R^{d\times d})$ consists of the matrix-valued functions with components in $C^1_\Delta(\R^d)=C_\Delta(\R^d)\cap C^1(\R^d)$.

\begin{lemma}\label{lem1}
Fix $\tau\in \Sigma^d(E)$ with columns $\tau_1,\ldots,\tau_d$. The operators $A_i\colon D\to C_\Delta(E)$ given by 
\begin{equation}\label{eqn1}
A_ig:=\tau_i^\top\nabla g,\quad g\in D,
\end{equation}
satisfy the conditions of Theorem~\ref{IIL:KKT2}. That is, each $A_i$ is the generator of a strongly continuous group of positive isometries of $C_\Delta(E)$, and its domain contains both $D$ and $A_i(D)$.
\end{lemma}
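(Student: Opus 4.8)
The plan is to realise each $A_i$ as the generator of the group of composition operators associated to the flow of the vector field $\tau_i$ (the $i$-th column of $\tau$), and to read off the required properties from this representation. Fix $i$. Since $\tau_i\in C^1_\Delta(\R^d,\R^d)$ is bounded and locally Lipschitz, the ODE $\dot\phi=\tau_i(\phi)$ has a unique global solution from every starting point, and the resulting flow $\{\phi_t\}_{t\in\R}$ consists of $C^1$-diffeomorphisms of $\R^d$ with $\phi_t^{-1}=\phi_{-t}$, jointly continuous in $(t,x)$, and obeying the uniform bound $|\phi_t(x)-x|\le\|\tau_i\|\,|t|$ for all $x\in\R^d$ and $t\in\R$. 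Note also that $A_ig=\tau_i^\top\nabla g$ is continuous with compact support for $g\in D$, hence indeed lies in $C_\Delta(E)$.

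The crucial step -- and the main obstacle -- is to show that $E$ is invariant under the flow, i.e.\ $\phi_t(E)=E$ for every $t$; this is where the tangency condition $\tau\in\Sigma^d(E)$ enters. Fix $y\in E$ and consider $\psi(t):=d_E(\phi_t(y))^2$, where $d_E$ is the Euclidean distance to the closed nonempty set $E$. Since $d_E$ is $1$-Lipschitz and $t\mapsto\phi_t(y)$ is Lipschitz and remains in a bounded set on bounded time intervals, $\psi$ is locally Lipschitz, with $\psi\ge0$ and $\psi(0)=0$. At a point $t$ where $\psi$ is differentiable, pick a nearest point $z\in E$ to $q:=\phi_t(y)$; comparing $\psi$ with the smooth function $s\mapsto|\phi_s(y)-z|^2$, which dominates $\psi$ near $t$ and coincides with it at $t$, gives $\psi'(t)=2(q-z)^\top\tau_i(q)$. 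To bring in the tangency condition I would use the test function $g:=\chi\cdot\big(\tfrac12 d_E(q)^2-\tfrac12|\cdot-q|^2\big)$, where $\chi\in C^\infty_c(\R^d)$, $0\le\chi\le1$, equals $1$ on a ball about $q$ containing $z$: then $g\in D$, $g\le0$ on $E$, $g(z)=0$, so $z$ maximises $g$ over $E$, and $\nabla g(z)=q-z$; hence $\tau(z)^\top\nabla g(z)=0$ by definition of $\Sigma^d(E)$, and in particular $\tau_i(z)^\top(q-z)=0$. Substituting this and using that $\tau_i$ is Lipschitz on a fixed compact set containing $q$ and $z$ for all relevant $t$, one gets $\psi'(t)=2(q-z)^\top(\tau_i(q)-\tau_i(z))=O(|q-z|^2)=O(\psi(t))$, so $\psi'\le C\psi$ a.e.\ on each bounded interval; Gr\"onwall together with $\psi(0)=0$ forces $\psi\equiv0$, i.e.\ $\phi_t(y)\in E$ for all $t\ge0$. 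Running the same argument with $-\tau_i$ covers $t\le0$, so $\phi_t(E)\subseteq E$ for all $t$, and then equality holds since $\phi_{-t}$ preserves $E$ as well. (Alternatively, invariance could be deduced from a standard flow-invariance criterion for closed sets, since $\Sigma^d(E)$ encodes bilateral tangency of $\tau_i$ via proximal normals.)

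Granting invariance, $\phi_t$ restricts to a homeomorphism of $E$, which extends to a homeomorphism of the one-point compactification $E^\Delta$ fixing $\Delta$; hence $T_tg:=g\circ\phi_t$ is a well-defined linear map of $C_\Delta(E)=C(E^\Delta)$ into itself. It is positive, an isometry since $\phi_t$ is a bijection of $E^\Delta$, and $\{T_t\}_{t\in\R}$ is a group because $\phi$ is a flow. For strong continuity I would check $\|T_tg-g\|\to0$ as $t\to0$ on the dense subspace $\R\cdot1\oplus C_c(E)$ of $C_\Delta(E)$: for $f\in C_c(E)$ the functions $f\circ\phi_t$ with $|t|\le1$ are all supported in one common compact set on which $f$ is uniformly continuous, and $\sup_x|\phi_t(x)-x|\le\|\tau_i\|\,|t|\to0$; since the $T_t$ are uniformly bounded (being isometries), strong continuity extends to all of $C_\Delta(E)$. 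Thus $\{T_t\}$ is a strongly continuous group of positive isometries of $C_\Delta(E)$; let $A$ be its generator.

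It remains to identify $A$ on $D$ and to verify that its domain contains $D\cup A_i(D)$. Both $D$ and $A_i(D)$ consist of $C^1$ functions with compactly supported gradient (for $g\in D$, $\nabla g$ has compact support, and $A_ig=\tau_i^\top\nabla g$ then has compact support and is $C^1$ since $\tau_i\in C^1$). For any such $h$ one has
\[
\frac{T_th-h}{t}=\frac1t\int_0^t\big(\tau_i^\top\nabla h\big)\circ\phi_s\,ds\ \longrightarrow\ \tau_i^\top\nabla h\qquad\text{uniformly as }t\to0,
\]
using the uniform estimate on $\phi_s$ together with the uniform continuity of $\nabla h$ and of $\tau_i$ on a fixed compact set. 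Hence $h$ lies in the domain of $A$ with $Ah=\tau_i^\top\nabla h$; in particular $D$ is contained in the domain of $A$ with $A|_D=A_i$, and $A_i(D)$ is contained in the domain of $A$, which is exactly what Theorem~\ref{IIL:KKT2} requires. Apart from the invariance step, every verification is routine; the substantive point is the combination of the nearest-point test function with the Gr\"onwall estimate on the squared distance to $E$.
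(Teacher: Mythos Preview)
Your argument is correct, and the overall architecture---build the flow of $\tau_i$, show $E$ is flow-invariant, define $T_t$ by composition, then read off the group properties and identify the generator---is exactly the paper's approach.

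The one genuine difference is the treatment of flow invariance of $E$. The paper dispatches this in one line by citing Proposition~2.5 of \citet{DF:04}, which gives invariance of a closed set under the deterministic flow precisely from the tangency condition encoded in $\Sigma^d(E)$. You instead prove it from scratch: your nearest-point test function $g=\chi\cdot(\tfrac12 d_E(q)^2-\tfrac12|\cdot-q|^2)$ produces the proximal-normal orthogonality $\tau_i(z)^\top(q-z)=0$, and then the Lipschitz estimate plus Gr\"onwall closes the loop on $\psi(t)=d_E(\phi_t(y))^2$. This is essentially the Nagumo-type argument that underlies the cited result, so the two proofs are morally identical; yours has the advantage of being fully self-contained and making transparent \emph{how} the definition of $\Sigma^d(E)$ is used, at the cost of a longer write-up. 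Your verification that $A_i(D)$ lies in the domain of the generator is also more explicit than the paper's one-sentence justification (``since the components of $\tau_i$ lie in $C^1_\Delta(\R^d)$'').
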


Note that $A_i$ is well-defined by \eqref{eqn1} in the sense that $A_ig$ only depends on $g$ through its values on $E$. This is a direct consequence of the definition~\eqref{eqn2} of $\Sigma^d(E)$.

\begin{proof}
By Proposition 2.5 in \cite{DF:04}, for each $i=1,\ldots,d$, there exists a map $(t,x)\mapsto\phi_i(t,x)$ from $\R\times E$ to $E$ such that
\[
\frac{\partial}{\partial t}\phi_i(t,x) = \tau_i( \phi_i(t,x)), 
\qquad \phi_i(0,x)=x,
\]
and the flow property $\phi_i(s+t,x)=\phi_i(s,\phi_i(t,x))$ holds since $\tau_i \in C^1_\Delta(\R^d,\R^{d})$. This implies that $T_{i,t}g(x) := g(\phi_i(t,x))$, $t\in\R$, defines a strongly continuous group of positive isometries of $C_\Delta(E)$ with generator $A_i$. It is clear that the domain of $A_i$ contains $D$, and it also contains $A_i(D)$ since the components of $\tau_i$ lie in $C^1_\Delta(\R^d)$.
\end{proof}

\begin{theorem}\label{IImainthm}
Let $L\colon P^D\to P$ be a linear operator satisfying \eqref{IIeq:T:Lpol}, where 
\begin{enumerate}
\item 
 $B$ is $E$-conservative and $B1 =0$,
\item \label{it8} $Q$ is given by
$$
Q(g\otimes g)= \cC \Psi (g\otimes g)+\tr\big((\tau^\top \nabla g)\otimes(\tau^\top \nabla g)^\top\big)\qquad g\in D,
$$
where $\tau\in\Sigma^d(E)$ and $\cC \colon E^2\to\R$ is a nonnegative symmetric function,
\item\label{it9} $B-\sum_{i=1}^d (\tau_i^\top \nabla)^2$ satisfies the positive maximum principle on $E$, where $\tau_1,\ldots,\tau_d$ are the columns of $\tau$. 
\end{enumerate}
Then conditions \ref{it7}--\ref{itiii} of Theorem~\ref{IImainthm5} hold.
\end{theorem}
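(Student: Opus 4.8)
The plan is to identify the abstract isometry data $(n,A_1,\dots,A_n)$ in Theorem~\ref{IImainthm5} with the concrete datum $\tau\in\Sigma^d(E)$, and then check conditions~\ref{it7}--\ref{itiii} of Theorem~\ref{IImainthm5} one at a time. Concretely I would set $n:=d$ and $A_i:=\tau_i^\top\nabla$ for $i=1,\dots,d$, where $\tau_1,\dots,\tau_d$ are the columns of $\tau$. Condition~\ref{it7} (that $B$ is $E$-conservative with $B1=0$) is word for word the first hypothesis of Theorem~\ref{IImainthm}, so nothing is needed there.

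For the condition on $Q$ in Theorem~\ref{IImainthm5}, I would invoke Lemma~\ref{lem1}: since $\tau\in\Sigma^d(E)$, each $A_i=\tau_i^\top\nabla$ is the generator of a strongly continuous group of positive isometries of $C_\Delta(E)$ whose domain contains both $D$ and $A_i(D)$ --- exactly what Theorem~\ref{IImainthm5} asks of the $A_i$. It then remains to rewrite the quadratic coefficient. On a rank-one element $g\otimes g$ with $g\in D$, the definition of $A_i\otimes A_i$ together with the fact that the symmetric tensor product of a scalar function with itself is ordinary pointwise multiplication gives
\[
\sum_{i=1}^d (A_i\otimes A_i)(g\otimes g)=\sum_{i=1}^d(\tau_i^\top\nabla g)\otimes(\tau_i^\top\nabla g)=\tr\big((\tau^\top\nabla g)\otimes(\tau^\top\nabla g)^\top\big),
\]
since the $i$-th component of $\tau^\top\nabla g$ is $\tau_i^\top\nabla g$. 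As $D\otimes D=\mathrm{span}\{g\otimes g:g\in D\}$ and both $\cC\Psi+\sum_i(A_i\otimes A_i)$ and $Q$ are linear maps on $D\otimes D$ fixed by their values on this spanning set, hypothesis~\ref{it8} of Theorem~\ref{IImainthm} becomes precisely the condition on $Q$ in Theorem~\ref{IImainthm5}, with the same nonnegative symmetric $\cC$; the requirement $\cC\Psi(g)\in\widehat C_\Delta(E^2)$ is automatic here because $L$ is assumed to satisfy~\eqref{IIeq:T:Lpol}.

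The substance lies in condition~\ref{itiii}. The key fact is that each second-order operator $(\tau_i^\top\nabla)^2$ satisfies the positive maximum principle on $E$: if $g\in D$ attains its supremum over $E$ at $x$, then by Lemma~\ref{lem1} the flow $t\mapsto\phi_i(t,x)$ of $\tau_i$ stays in $E$, so $t\mapsto g(\phi_i(t,x))$ is a $C^2$ function of $t$ maximized at $t=0$, and its second derivative there equals $(\tau_i^\top\nabla)^2 g(x)$, which is therefore $\le0$. (This is the mechanism behind Remark~\ref{IIrem7}.) Granting this, fix $g\in D$ and $x\in E$ with $g(x)=\sup_E g\ge0$: hypothesis~\ref{it9} of Theorem~\ref{IImainthm} gives $Bg(x)\le\sum_{i=1}^d(\tau_i^\top\nabla)^2 g(x)$, while each summand on the right is $\le0$ by the previous sentence; hence
\[
\Big(B-\tfrac12\sum_{i=1}^d A_i^2\Big)g(x)=Bg(x)-\tfrac12\sum_{i=1}^d(\tau_i^\top\nabla)^2 g(x)\le\tfrac12\sum_{i=1}^d(\tau_i^\top\nabla)^2 g(x)\le0.
\]
Thus $B-\tfrac12\sum_iA_i^2$ satisfies the positive maximum principle on $E$, and since $E\subseteq\R^d$ and $D=\{f|_E:f\in\R+C_c^\infty(\R^d)\}\subseteq\R+C_c(E)$, Remark~\ref{IIrem:maxprinciple}\ref{it2} upgrades it to the positive maximum principle on $E^\Delta$ --- exactly condition~\ref{itiii}. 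With all three conditions verified, Theorem~\ref{IImainthm5} applies.

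The one non-routine point --- and thus the main obstacle --- is this third step: one must notice that hypothesis~\ref{it9}, written with the full sum $\sum_i(\tau_i^\top\nabla)^2$, already delivers the superficially weaker requirement on $B-\tfrac12\sum_iA_i^2$ in Theorem~\ref{IImainthm5}, the ``missing half'' being absorbed because the tangency built into $\Sigma^d(E)$ forces each individual $(\tau_i^\top\nabla)^2$ to obey the positive maximum principle. The rest is bookkeeping and notation-matching.
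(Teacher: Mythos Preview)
Your argument is correct and follows the same architecture as the paper's proof: set $A_i=\tau_i^\top\nabla$, invoke Lemma~\ref{lem1} for the group-of-isometries property, and use Remark~\ref{IIrem:maxprinciple}\ref{it2} to pass from the positive maximum principle on $E$ to $E^\Delta$.

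In fact your write-up is more careful than the paper on one point. The paper's proof says the result ``follows directly from Lemma~\ref{lem1}, up to'' the $E$ versus $E^\Delta$ issue, and does not comment on the mismatch between the coefficient $1$ in hypothesis~\ref{it9} of Theorem~\ref{IImainthm} and the coefficient $\tfrac12$ in condition~\ref{itiii} of Theorem~\ref{IImainthm5}. You correctly observe that this is not a triviality to be waved away: one needs that each $(\tau_i^\top\nabla)^2$ individually satisfies the positive maximum principle on $E$ (which you derive from the flow invariance in Lemma~\ref{lem1}, equivalently from Remark~\ref{IIrem7}), and then the chain $Bg(x)\le\sum_i(\tau_i^\top\nabla)^2g(x)\le\tfrac12\sum_i(\tau_i^\top\nabla)^2g(x)\le0$ closes the gap. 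This step is genuinely needed as the theorem is stated, and your treatment of it is clean.

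One cosmetic remark: your phrase ``the symmetric tensor product of a scalar function with itself is ordinary pointwise multiplication'' is slightly misleading --- $(A_ig)\otimes(A_ig)$ is a function on $E^2$, namely $(x,y)\mapsto(A_ig)(x)(A_ig)(y)$, not pointwise multiplication on $E$. The algebra you then carry out is correct, so this is purely a matter of wording.
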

\begin{proof}
This follows directly from Lemma~\ref{lem1}, up to the fact that in \ref{it9} we need to verify that the positive maximum principle holds on $E^\Delta$, not just on $E$. Since $D\subseteq C_c(E)$, this follows from Remark~\ref{IIrem:maxprinciple}\ref{it2}.
\end{proof}

The rest of the section is devoted to the case $d=1$ and $E=\R$. In view of Lemma~\ref{IIlem1}, the operator $B$ should satisfy the positive maximum principle on $E=\R$. It is well-known, see e.g.~\cite{C:65} or \cite{H:98}, that under this condition $B$ is a L\'evy type operator, i.e.
\begin{equation}\label{IIeqn24}
Bg=b g'+\frac 1 2ag''+\int \left( g(\fdot+\xi)-g-\chi(\xi) g' \right) F(\fdot,d\xi),\qquad g\in D,
\end{equation}
for some continuous functions $a$, $b$ with $a\ge0$, a truncation function~$\chi$, and a kernel $F(\fdot,d\xi)$ from $\R$ to $\R$ such that
$\int |\xi|^2\wedge1\,F(\fdot,d\xi)<\infty$. Every operator of this form satisfies $B1=0$ and the positive maximum principle on $\R$.  The following result expresses Theorem \ref{IImainthm} in this setting.

\begin{corollary}\label{IImainthmB}
Let $L\colon P^D\to P$  be a linear operator satisfying \eqref{IIeq:T:Lpol}, where $B$ is given by \eqref{IIeqn24} with $a:=\sigma^2+\tau^2$ for some continuous functions $\sigma$ and $\tau$, and $Q$ is given by
$$Q(g\otimes g)(x,y)=\frac 1 2 \cC (x,y)(g(x)-g(y))^2+\tau(x)\tau(y)g'(x)g'(y),\qquad g\in D,$$
where $\cC \in \widehat C_\Delta(\R^2)$ is  nonnegative and $\tau\in C^1_\Delta(\R)$. Assume also that $B$ is $\R$-conservative. 
Then conditions \ref{it7}--\ref{itiii} of Theorem~\ref{IImainthm5} hold true.
\end{corollary}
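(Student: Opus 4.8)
The plan is to apply Theorem~\ref{IImainthm} in the case $d=1$, $E=\R$; concretely, I would verify the three hypotheses \ref{it7}--\ref{itiii} of Theorem~\ref{IImainthm5} with $n=1$, with $\cC$ the given function, and with the single operator $A_1\colon D\to C_\Delta(\R)$ defined by $A_1g:=\tau g'$. Hypothesis~\ref{it7} is immediate: $\R$-conservativity of $B$ is assumed outright, and $B1=0$ is read off from~\eqref{IIeqn24}, since all derivatives of a constant vanish and $g(\fdot+\xi)-g\equiv0$ when $g\equiv1$.

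For hypothesis~2, I would first observe that $\Sigma^1(\R)=C^1_\Delta(\R)$: the set $E=\R$ has empty boundary, so any $g\in D$ attaining $\max_\R g$ at a point $x$ satisfies $g'(x)=0$, whence $\tau(x)g'(x)=0$ and the defining condition in~\eqref{eqn2} is vacuous. Thus $\tau\in\Sigma^1(\R)$, and Lemma~\ref{lem1} (with $d=1$) gives precisely what is required: $A_1$ generates a strongly continuous group of positive isometries of $C_\Delta(\R)$ and its domain contains $D$ and $A_1(D)$. It then remains to identify the $Q$ in the statement with $\cC\Psi+A_1\otimes A_1$ on $D\otimes D$. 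Since $(h\otimes h)(x,y)=h(x)h(y)$ for any scalar $h$, one has $\Psi(g\otimes g)(x,y)=\tfrac12(g(x)-g(y))^2$ and $\big((A_1g)\otimes(A_1g)\big)(x,y)=\tau(x)g'(x)\tau(y)g'(y)$, so $\big(\cC\Psi+A_1\otimes A_1\big)(g\otimes g)$ is exactly the displayed $Q$. (The implicit requirement in~\eqref{IIeq:T:Lpol} that $Q$ land in $\widehat C_\Delta(\R^2)$ is part of the hypothesis, and is in any case automatic because $\cC\in\widehat C_\Delta(\R^2)$ and $\tau g'\in C_c(\R)\subseteq C_\Delta(\R)$ for $g\in D$.)

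Hypothesis~\ref{itiii} is the only computation. Expanding $A_1^2g=\tau(\tau g')'=\tau^2g''+\tau\tau'g'$ and using $a=\sigma^2+\tau^2$, one gets
\[
\Big(B-\tfrac12A_1^2\Big)g=\Big(b-\tfrac12\tau\tau'\Big)g'+\tfrac12\sigma^2g''+\int\big(g(\fdot+\xi)-g-\chi(\xi)g'\big)F(\fdot,d\xi),
\]
which is again a L\'evy-type operator of the form~\eqref{IIeqn24}, now with the \emph{nonnegative} second-order coefficient $\sigma^2$, the continuous drift $b-\tfrac12\tau\tau'$, and unchanged truncation function and jump kernel (and it maps $D$ into $C_\Delta(\R)$, since $Bg$ does by hypothesis and $A_1^2g$ has compact support). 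Every operator of this form satisfies the positive maximum principle on $\R$; since $D\subseteq\R+C_c(\R)$, Remark~\ref{IIrem:maxprinciple}\ref{it2} upgrades this to the positive maximum principle on $\R^\Delta$. This gives~\ref{itiii}, hence all of \ref{it7}--\ref{itiii}.

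I do not expect a real obstacle. The one point that must be handled carefully is the bookkeeping in~\ref{itiii}: the splitting $a=\sigma^2+\tau^2$ is exactly what makes the second-order coefficient of $B-\tfrac12A_1^2$ equal to the nonnegative function $\sigma^2$, so the factor $\tfrac12$ in front of $A_1^2$ is essential here -- this is why it is cleanest to carry out the verification directly at the level of Theorem~\ref{IImainthm5}. One should also not forget the mild $\R\to\R^\Delta$ upgrade supplied by Remark~\ref{IIrem:maxprinciple}\ref{it2}, exactly as in the proof of Theorem~\ref{IImainthm}.
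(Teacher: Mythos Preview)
Your proposal is correct and follows essentially the same route as the paper: the corollary is meant to be read as the $d=1$, $E=\R$ instance of Theorem~\ref{IImainthm}, which in turn is a specialization of Theorem~\ref{IImainthm5} via Lemma~\ref{lem1}, and you reconstruct exactly this chain. Your explicit bookkeeping with the factor $\tfrac12$ in condition~\ref{itiii} of Theorem~\ref{IImainthm5} is in fact tidier than a literal appeal to Theorem~\ref{IImainthm} as printed (whose condition~\ref{it9} omits the $\tfrac12$; with $a=\sigma^2+\tau^2$ one genuinely needs the $\tfrac12$ so that the residual second-order coefficient is the nonnegative $\sigma^2$), so working directly at the level of Theorem~\ref{IImainthm5} is the right call.
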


The coefficient $\cC$ quantifies the diffusive exchange of mass between different points in the support of $X_t(dx)$. This is perhaps most clearly seen when $E=\{1,\ldots,d\}$; see Section~\ref{unisimpl}. The role of $\tau$ is different, as it governs random fluctuations of the support of $X_t(dx)$. The following example illustrates this point.

\begin{example}
Consider an operator $L$ of the form given in Corollary~\ref{IImainthmB} with $\cC =0$, $Bg=\frac 1 2 g''$, and $\tau=1$ (hence $\sigma=0$). The resulting operator $Q$ is given by $Q(g\otimes g)=g'\otimes g'$. A solution to the martingale problem for $L$ is given by $X=\delta_W$, where $W$ is a Brownian motion. Indeed, applying It\^o's formula to $\langle g,X_t\rangle^k=g(W_t)^k$ for any $g\in D$ and $k\in \N_0$ establishes that \eqref{IIeqnN} is a martingale for any $p\in P^D$.
\end{example}

In this example, as well as in Corollary~\ref{IImainthmB}, a nonzero $\tau$ in the specification of $Q$ is coupled with a corresponding diffusive component in the specification \eqref{IIeqn24} of $B$. The following result shows that this is a general phenomenon.

\begin{proposition}\label{prop1}
Let $L\colon P^D\to P$  be a linear operator satisfying \eqref{IIeq:T:Lpol} with $B$ given by \eqref{IIeqn24}. Suppose that $L$ satisfies the positive maximum principle on $\R$. If $a=0$, then $Q=\cC \Psi$ for some nonnegative symmetric function $\cC :\R^2\to\R$.
\end{proposition}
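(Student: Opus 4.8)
The plan is to reproduce the derivation of the form of $Q$ carried out in the proof of Theorem~\ref{IIthm1}: I would verify that $Q$ satisfies the hypotheses of Lemma~\ref{IIL:form of Q NEW}, namely
\[
\langle Q(g\otimes g),\nu^2\rangle\ge0\quad\text{for all }g\in D,\ \nu\in M_1(\R^\Delta),
\]
and $\langle Q(g\otimes g),\nu^2\rangle\le0$ whenever, in addition, $g$ is constant on $\supp(\nu)$. Lemma~\ref{IIL:form of Q NEW} then yields $Q=\cC\Psi$ with $\cC$ nonnegative and symmetric, which is the claim. The difference with Theorem~\ref{IIthm1} is that there $B=0$, so the first-order term $\langle B(\partial p(\nu)),\nu\rangle$ produced by the positive maximum principle vanishes automatically; here it does not, and the role of the hypothesis $a=0$ is precisely to keep it under control.

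The first inequality is obtained exactly as in Theorem~\ref{IIthm1}: apply the positive maximum principle to $p(\mu):=-(\langle g,\nu\rangle-\langle g,\mu\rangle)^2$, which satisfies $p\le0=p(\nu)$ (so $\nu$ is a maximizer) and $\partial p(\nu)\equiv0$, so that \eqref{IIeq:T:Lpol} gives $0\ge Lp(\nu)=-\langle Q(g\otimes g),\nu^2\rangle$; this uses neither $a=0$ nor the form of $B$. A preliminary computation I would also carry out is to feed the positive maximum principle the degree-two polynomial $p(\mu):=\langle1,\mu\rangle\langle h,\mu\rangle-\langle h,\mu\rangle$ ($h\in D$) together with its negative: since $p\equiv0$ on $M_1(\R^\Delta)$, every $\nu$ is simultaneously a maximizer of $p$ and of $-p$, hence $Lp(\nu)=0$, and because $\partial p(\nu)$ is constant and $B1=0$ this reduces to $\langle Q(1\otimes h),\nu^2\rangle=0$ for all $\nu$, i.e.\ $Q(1\otimes h)\equiv0$ on $(\R^\Delta)^2$ for every $h\in D$; this will be needed in the last step.

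For the second inequality, take $p(\mu):=\langle g,\mu\rangle^2-\langle g^2,\mu\rangle$ with $g$ constant $\equiv c_0$ on $\supp(\nu)$ (note $g^2\in D$ since $D=\R+C^\infty_c(\R)$ is an algebra). By Jensen $p\le0$ and $p(\nu)=0$, so $\nu$ maximizes $p$; moreover $\partial^2 p(\nu)=2\,g\otimes g$ and $\partial p(\nu)=c_0^2-(g-c_0)^2$, so $B(\partial p(\nu))=-B\big((g-c_0)^2\big)$ because $B1=0$. Here $a=0$ enters: both $(g-c_0)^2$ and its first derivative vanish on $\supp(\nu)$, so the diffusion term contributes nothing there and $B\big((g-c_0)^2\big)(z)=\int(g(z+\xi)-c_0)^2\,F(z,d\xi)$ for $z\in\supp(\nu)$. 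Plugging this and $\partial^2 p(\nu)=2\,g\otimes g$ into \eqref{IIeq:T:Lpol} and using $Lp(\nu)\le0$ gives
\[
\langle Q(g\otimes g),\nu^2\rangle\ \le\ \int_{\supp(\nu)}\!\int(g(z+\xi)-c_0)^2\,F(z,d\xi)\,\nu(dz).
\]

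The right-hand side is nonnegative, not zero, so this is not yet the required bound, and removing this jump remainder is the main obstacle. I would attack it as follows: first run the above with $g$ required to be constant not merely on $\supp(\nu)$ but on the set $\{z+\xi:z\in\supp(\nu),\ \xi\in\supp F(z,\cdot)\}$ as well — for such $g$ the remainder vanishes, so $\langle Q(g\otimes g),\nu^2\rangle\le0$ — and then extend to all $g$ constant on $\supp(\nu)$. For the extension one uses that, by the first (unconditional) inequality, $g\mapsto Q(g\otimes g)(x,x)$ is a nonnegative quadratic form on $D$, which already forces $Q(g\otimes g)(x,x)=0$ once it is squeezed between $0$ and a quantity that can be driven to zero; combined with $Q(1\otimes h)\equiv0$ this lets one propagate $\langle Q(g\otimes g),\nu^2\rangle\le0$ from the reachable-set-constant test functions to all admissible ones. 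This propagation — the genuine use of the pure-jump structure of $B$ under $a=0$, closing the gap left by the jump part — is the step I expect to be delicate; the remainder of the argument is a routine adaptation of the reasoning behind Theorem~\ref{IIthm1}, with Lemma~\ref{IIL:form of Q NEW} supplying the final conclusion.
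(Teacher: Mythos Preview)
Your overall strategy---verify the hypotheses of Lemma~\ref{IIL:form of Q NEW} and then invoke it---is the right one, and your derivation of the nonnegativity $\langle Q(g\otimes g),\nu^2\rangle\ge0$ and of $Q(1\otimes h)=0$ is fine. The gap is in the ``propagation'' step you flag as delicate: as stated, it does not go through.

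The trouble is that the jump remainder $\int(g(z+\xi)-c_0)^2 F(z,d\xi)$ cannot, in general, be killed by restricting $g$ to be constant on the reachable set $\{z+\xi:z\in\supp(\nu),\ \xi\in\supp F(z,\cdot)\}$: if $F(z,\cdot)$ has full support (e.g.\ Gaussian or Cauchy jump increments), that set is all of~$\R$ and the only admissible $g$ are constants, so you learn nothing beyond $Q(1\otimes1)=0$. Even when the reachable set is small, your proposed extension needs a continuity or locality property of $g\mapsto Q(g\otimes g)(x,y)$ that has not been established: you want to drive the upper bound to zero by replacing $g$ with approximants $g_n$, but then the quantity being bounded, $Q(g_n\otimes g_n)$, changes as well, and nothing ensures it converges to $Q(g\otimes g)$. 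This is precisely the missing ingredient.

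The paper bypasses this by working not with the fixed polynomial $p(\mu)=\langle g,\mu\rangle^2-\langle g^2,\mu\rangle$ but with a carefully designed \emph{sequence} $p_n\in P^D$ (Lemma~\ref{IIlem2}) having two features: (a) the second derivative $\partial^2 p_n(\nu_\lambda)$ agrees with $g\otimes g$ up to terms with the same values and first derivatives at $x,y$, so that $\langle Q(\partial^2 p_n(\nu_\lambda)),\nu_\lambda^2\rangle=\langle Q(g\otimes g),\nu_\lambda^2\rangle$ exactly for all~$n$; and (b) the first derivative $f_n:=\partial p_n(\nu_\lambda)$ is built from localizing bumps so that $\langle Bf_n,\nu_\lambda\rangle\to-\tfrac12\langle(ag')^2,\nu_\lambda\rangle$, i.e.\ the jump contribution is squeezed out in the limit and only the diffusion part $a$ survives. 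With $a=0$ this gives $\langle Q(g\otimes g),\nu_\lambda^2\rangle\le0$ directly (Lemma~\ref{IIlem12}), and the choices $\lambda=0,1,\tfrac12$ then feed Lemma~\ref{IIL:form of Q NEW}\ref{IIL:form of Q NEW:1}. The construction of $p_n$---which simultaneously controls the second-order term exactly and the first-order term asymptotically---is the idea your proposal is missing.
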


\begin{proof}
Lemma~\ref{IIlem12} below with $\lambda=0,1,1/2$, along with Lemma~\ref{IIlem1}, imply that the conditions of Lemma~\ref{IIL:form of Q NEW}\ref{IIL:form of Q NEW:1} are satisfied. The result follows.
\end{proof}

The next lemma constitutes the main tool to prove Proposition~\ref{prop1}. 
But it also has other consequences. In particular, it implies that $Q(g\otimes g)(x,y)$ depends on $g$ just through $g(x),g(y), g'(x)$, and $g'(y)$, provided that $L$ satisfies the positive maximum principle on $M_1(E)$. This illustrates that the form of $Q$ as given in Theorem~\ref{IImainthm} is very general.

\begin{lemma}\label{IIlem12}
Let $L\colon P^D\to P$  be a linear operator satisfying \eqref{IIeq:T:Lpol} with $B$ given by \eqref{IIeqn24}. Suppose that $L$ satisfies the positive maximum principle on $\R$.
Then, for all $\lambda\in[0,1]$, $g\in D$, and  $x,y\in \R$ such that $g(x)=g(y)$, we have that
\[
\big\langle Q(g\otimes g),\nu_\lambda^2\big\rangle\leq\big\langle  (a g')^2,\nu_\lambda\big\rangle
,\qquad  \nu_\lambda=\lambda\delta_x+(1-\lambda)\delta_y.
\]
\end{lemma}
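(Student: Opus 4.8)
The plan is to apply the positive maximum principle assumed for $L$ to explicit polynomials whose maximizer over $M_1(\R)$ is $\nu_\lambda$, exploiting that $g$ is constant on $\operatorname{supp}\nu_\lambda=\{x,y\}$. Write $c:=g(x)=g(y)$ and take $p(\mu):=\langle g,\mu\rangle^2-\langle g^2,\mu\rangle\in P^D$. Jensen's inequality gives $p\le0$ on $M_1(\R)$, while $p(\nu_\lambda)=c^2-c^2=0$ since $g\equiv c$ on $\{x,y\}$ forces $\langle g,\nu_\lambda\rangle=c$ and $\langle g^2,\nu_\lambda\rangle=c^2$. Hence $\nu_\lambda$ maximizes $p$ over $M_1(\R)$ with $p(\nu_\lambda)\ge0$, so the positive maximum principle gives $Lp(\nu_\lambda)\le0$. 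By Lemma~\ref{IIL:Psmooth}\ref{IIL:Psmooth:1} one has $\partial p(\nu_\lambda)=2\langle g,\nu_\lambda\rangle g-g^2=2cg-g^2$ and $\partial^2p(\nu_\lambda)=2\,g\otimes g$; inserting these into \eqref{IIeq:T:Lpol} and rearranging yields
\[
\big\langle Q(g\otimes g),\nu_\lambda^2\big\rangle\ \le\ \big\langle B(g^2)-2c\,Bg,\ \nu_\lambda\big\rangle .
\]

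Next I would evaluate the right-hand side. Since $\nu_\lambda$ is carried by $\{x,y\}$, where $g=c$, one may replace $2c\,Bg$ by $2g\,Bg$ there, so the right-hand side equals $\langle B(g^2)-2g\,Bg,\ \nu_\lambda\rangle$, the $\nu_\lambda$-integral of the carr\'e-du-champ of $B$ evaluated at $g$. For $B$ of the L\'evy form \eqref{IIeqn24} a direct computation (the drift and compensator contributions cancelling) gives
\[
B(g^2)-2gBg\ =\ a\,(g')^2\ +\ \int\big(g(\fdot+\xi)-g\big)^2\,F(\fdot,d\xi),
\]
so that $\langle Q(g\otimes g),\nu_\lambda^2\rangle\le\langle(ag')^2,\nu_\lambda\rangle+\big\langle\int(g(\fdot+\xi)-g)^2F(\fdot,d\xi),\ \nu_\lambda\big\rangle$, and only the nonnegative jump term separates this from the assertion.

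I expect the disposal of this jump term to be the main obstacle, as the positive maximum principle applied to $p$ only controls the full carr\'e-du-champ of $B$. My approach would be a localization argument: for $\varepsilon>0$ replace $g$ by a function $g_\varepsilon\in D$ that coincides with $g$ and with $g'$ at both $x$ and $y$ but is constant outside $\varepsilon$-balls around $x$ and $y$; then $g_\varepsilon(x)=g_\varepsilon(y)$, the inequality above applies to $g_\varepsilon$, and its right-hand side tends to $\langle(ag')^2,\nu_\lambda\rangle$ as $\varepsilon\downarrow0$ because the jump contribution at $x,y$ is then dominated by $\int_{|\xi|\le\varepsilon}|\xi|^2F(\fdot,d\xi)\to0$. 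Transferring the conclusion back from $g_\varepsilon$ to $g$ is the delicate point; here I would exploit that $h\mapsto\langle Q(h\otimes h),\nu_\lambda^2\rangle$ is a nonnegative quadratic form — itself obtained from the positive maximum principle applied to the test polynomial $\mu\mapsto-(\langle h,\mu\rangle-\langle h,\nu_\lambda\rangle)^2$ — and combine it, via the splitting $g=g_\varepsilon+(g-g_\varepsilon)$ with $g-g_\varepsilon$ vanishing to first order at $x,y$, with an estimate for the cross term $\langle Q((g-g_\varepsilon)\otimes(g-g_\varepsilon)),\nu_\lambda^2\rangle$. The degenerate case $x=y$, where $\nu_\lambda=\delta_x$ for all $\lambda$, is the one-point specialization of the same argument.
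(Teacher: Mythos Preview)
Your first two paragraphs are correct: applying the positive maximum principle to $p(\mu)=\langle g,\mu\rangle^2-\langle g^2,\mu\rangle$ yields
\[
\langle Q(g\otimes g),\nu_\lambda^2\rangle\ \le\ \langle a(g')^2,\nu_\lambda\rangle\ +\ \Big\langle\int(g(\fdot+\xi)-g)^2\,F(\fdot,d\xi),\nu_\lambda\Big\rangle,
\]
and this is in fact a cleaner derivation of that intermediate inequality than the paper's.

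The gap is in your third paragraph, exactly at the ``delicate point'' you flag. Your plan is to localize to $g_\varepsilon$ (matching $g,g'$ at $x,y$, constant elsewhere), so that the jump term for $g_\varepsilon$ vanishes as $\varepsilon\downarrow0$, and then transfer back to $g$ via the splitting $g=g_\varepsilon+(g-g_\varepsilon)$ and Cauchy--Schwarz for the nonnegative form $h\mapsto\langle Q(h\otimes h),\nu_\lambda^2\rangle$. But the only control you have on $\langle Q((g-g_\varepsilon)\otimes(g-g_\varepsilon)),\nu_\lambda^2\rangle$ is the \emph{same} carr\'e-du-champ inequality applied to $h_\varepsilon:=g-g_\varepsilon$: the diffusion part vanishes (since $h_\varepsilon'(x)=h_\varepsilon'(y)=0$), but the jump part at $z\in\{x,y\}$ is $\int h_\varepsilon(z+\xi)^2 F(z,d\xi)$, and since $h_\varepsilon=g-c$ away from the $\varepsilon$-balls this integral converges to the original jump term $\int(g(z+\xi)-c)^2F(z,d\xi)$, not to zero. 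The argument is therefore circular and cannot close.

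What the paper supplies is precisely the missing ingredient: it first proves, by a separate and more delicate choice of test polynomials, that
\[
\langle Q(h\otimes h),\nu_\lambda^2\rangle=0\quad\text{whenever }h(x)=h(y)\text{ and }h'(x)=h'(y)=0.
\]
This is done with polynomials of the form $p_n(\mu)=\langle g,\mu\rangle^2 F_n(\langle H,\mu\rangle)-\tfrac1n\langle H,\mu\rangle$, where $H\in D$ vanishes to second order at $x,y$ with $|g|\le H$, and $F_n$ is a polynomial on $[0,1]$ satisfying $zF_n(z)\le 1/n$ (Lemma~\ref{IIlem10}). These $p_n$ are $\le0$ on $M_1(\R)$ with equality at $\nu_\lambda$, and one computes $\partial p_n(\nu_\lambda)=-H/n$, $\partial^2p_n(\nu_\lambda)=2\,g\otimes g$; the maximum principle then gives $\langle Q(g\otimes g),\nu_\lambda^2\rangle\le\langle BH,\nu_\lambda\rangle/n\to0$. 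Once this vanishing is in hand, $Q$ at $\nu_\lambda$ depends only on the one-jets at $x,y$, and the inequality can be transferred from suitably localized approximants $g_n$ back to $g$ (this is packaged as Lemma~\ref{IIlem2}). Your Jensen polynomial alone does not produce this vanishing; a second, independent family of test functions is needed.
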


 \begin{proof}
 Fix  $g\in D$ such that $g(x)=g(y)$. Since, by Lemma \ref{IIlem1}, $B1=0$ and $Q(g\otimes 1)=0$ it is enough to consider the case $g(x)=g(y)=1$.
The result will follow from Lemma~\ref{IIlem2}. Indeed, if we let $(p_n)_{n\in \N}$ and $(f_n)_{n\in \N}$ be the sequences described there, by the positive maximum principle of $L$ on $\R$ we get
$$
 0\geq Lp_n(\nu_\lambda)= \big\langle  B f_n,\nu_\lambda\big\rangle+\frac 1 2 \Big\langle Q(g\otimes g),\nu_\lambda^2\Big\rangle
$$
and letting $n$ go to $\infty$ we can conclude the proof.

To verify the hypotheses of Lemma~\ref{IIlem2}, observe that Lemma \ref{IIlem1} yields
$$\langle Q(g\otimes g),\nu_\lambda^2\rangle\geq 0\qquad\text{for all $\lambda\in[0,1]$.}$$
Fix some $g\in D$ and $x,y\in \R$ such that $g(z)=g'(z)=0$ for $z\in\{x,y\}$, and suppose that $\|g\|=1$.
Let $F_n:[0,1]\to\R$ be the function defined in Lemma~\ref{IIlem10}.
Consider then the sequence of polynomials given by
$$p_n(\nu)=\langle \tg,\nu\rangle^2F_n\big(\langle H,\nu\rangle\big)-\frac 1n{\langle H,\nu\rangle} ,$$
where, for some compactly supported function $\rho\in C^\infty_\Delta(\R )$ such that $\rho=1$ on some neighborhood of $x$ and $y$ and  $\rho (\R )\subseteq[0,1]$,
$$H(z)={C|z- x|^2|z- y|^2\rho(z)+(1-\rho(z))}.$$
Observe that the conditions  on $g$ guarantee that 
for $C$ big enough $|g|\leq H$ and thus $|\langle \tg,\nu\rangle|\leq \langle H,\nu\rangle$ for all $\nu\in M_1(\R)$. For $\supp(\rho)$ small enough we also have that $\| H\|\leq 1$.
Lemma~\ref{IIlem10} then yields
$
\langle \tg,\nu\rangle^2F_n\big(\langle H,\nu\rangle\big)
\leq \frac 1n {\langle H,\nu\rangle}$ for all $\nu\in M_1(\R),
$
and therefore $p_n\leq0$ on $M_1(\R )$. This automatically implies that $p_n$ has a maximum at $\nu_\lambda$ for all $\lambda\in[0,1]$. Proceeding as in the proof of Theorem~\ref{IImainthm3} we then obtain that $\langle Q(g\otimes g),\nu_\lambda^2\rangle=0$ for any $g\in D$ such that $g(x)=g(y)=1$ and $g'(x)=g'(y)=0$. Choosing $\lambda=0,1,1/2$ we get the result.
\end{proof}

The following example gives a simple condition for well-posedness. We let $G_k$, $C_k$, and  $T_k$ be as in Remark~\ref{IIrem:maxprinciple}\ref{IIrem10}.

\begin{example}\label{ex1}
Consider the setting of Corollary~\ref{IImainthmB}. Suppose that $\sigma^2$ is bounded away from zero, let the jump kernel $F(\fdot,d\xi)$ in \eqref{IIeqn24} be zero, and assume that the parameters $b$ and $\sigma^2$ are Lipschitz continuous and bounded. Then, by Theorem~8.1.6 of \cite{EK:05}, $B$ is $\R$-conservative and the closure of $G_k+T_k$ generates a strongly continuous semigroup on $\widehat C_\Delta(E^k)$ for each $k\in \N$. Since $C_k$ is bounded, $\overline L_k$ generates a strongly continuous contraction semigroup on $\widehat C_\Delta(E^k)$ as well (see e.g.~Theorem~1.7.1 in \cite{EK:05} for more details).  Since Remark~\ref{IIrem:maxprinciple}\ref{IIrem10} shows that  $L_k$ satisfies the positive maximum principle, Remark~\ref{rem2} and Theorem~\ref{IIthm:new} yield the moment formula for all $g\in D^{\otimes k}$. Well-posedness thus follows from Theorem~\ref{IImainthm5}.
\end{example}

\subsection{Conditional laws of jump-diffusions are polynomial}\label{sec:conditionallaw}

In this section we deal with particle systems driven by some idiosyncratic noise (Brownian motion and jumps) and one common Brownian motion. We show that for essentially  \emph{all} such jump diffusions the conditional law with respect to the common Brownian motion is polynomial.

Throughout $E=\R$ and $D\subseteq \R+C_c^\infty(\R)$. Let $b, \sigma, \tau $  and $F(\fdot,d\xi)$ be as in Corollary~\ref{IImainthmB} with the additional integrability conditon $\int |\xi|^2 \wedge |\xi| F(\fdot, d\xi) <\infty$. For these parameters and $\cC=0$ we
define $L$ to be the corresponding polynomial operator as of Corollary~\ref{IImainthmB}. 

Moreover, let $(Z^i)_{i\in\N}$ be a weak solution of the system
\begin{equation}\label{IIIeqn12}
dZ^i_t =b(Z^i_t)dt+\sigma(Z^i_t)dW^i_t +\tau(Z^i_t)dW_t^0+\int \xi\big(\mathfrak{p}^i(dt,d\xi)-F(Z_t^i,d\xi)dt\big), \quad Z^i_0=x\in \R,
\end{equation}
where $W^0$ is a Brownian motion and $(W^1, \mathfrak{p}^1), (W^2, \mathfrak{p}^2)\ldots$ is a sequence of couples of Brownian motions and random measures with compensator $F(\fdot,d\xi)$. We assume that each couple is independent of the other couples and of $W^0$. Note that the generator of each $Z^i$ is given by
$B$ as defined in \eqref{IIeqn24}. 
 
Assume now  that $Z^1, Z^2, \ldots$ are exchangeable and set
$$X_t =\lim_{n\to\infty}\frac 1 n\sum_{i=1}^n\delta_{Z_t^i}.$$
By De Finetti's theorem (see e.g.~Theorem 4.1 in \cite{KK:08} or, for a general overview, also Section 12.3 in \cite{K:13}) we get that $(Z^i_t)_{i\in \N}$ are conditionally i.i.d.~with respect to the invariant $\sigma$-algebra $\Fcal_t^\infty=\sigma(X_s,s\leq t)$ and that $X$ can be expressed as
\begin{align}\label{eq:Xcondlaw}
X_t=\P(Z^1_t \in \fdot |\Fcal_t^\infty).
\end{align}
This implies in particular that for all $g\in D^{\otimes k}$ and $k\in \N$ it holds 
\begin{align}\label{eq:polyXcondlaw}
\langle g, X^k_t\rangle=\E[ g(Z^1_t,\ldots, Z^k_t) |\Fcal^\infty_t].
\end{align}

Note that (see e.g.~Theorem 2.3 in \cite{KX:99}) that under the additional assumption of  pathwise uniqueness for the solution of \eqref{IIIeqn12}, we get that
$$X_t=\P(Z^1_t \in \fdot |\Fcal_t^0),\qquad\text{where}\qquad \Fcal_t^0=\sigma(W^0_s,s\leq t),$$ since $\Fcal_t^0= \Fcal_t^{\infty}$ in this case.

In the following proposition we now show that $X$ is polynomial by proving that it solves the martingale problem for the polynomial operator $L$ specified above.

\begin{proposition}\label{IIIlem4}
Let $X$ be given by \eqref{eq:Xcondlaw}. Then 
$X$ solves the martingale problem for $ L$  with initial condition $\delta_x$.
\end{proposition}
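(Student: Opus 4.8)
The plan is to verify directly that $N^p_t = p(X_t) - p(X_0) - \int_0^t Lp(X_s)\,ds$ is a martingale for every $p\in P^D$. Since $P^D$ is spanned by the constant polynomial and rank-one monomials $\langle g,\nu\rangle^k$ with $g\in D$, and since $L$ is linear, it suffices to check the martingale property for $p(\nu)=\langle g,\nu\rangle^k$ with $g\in D$ and $k\in\N_0$. For such $p$, the key identity is \eqref{eq:polyXcondlaw}: because $X_t=\P(Z^1_t\in\fdot\mid\Fcal^\infty_t)$ and the $Z^i$ are conditionally i.i.d.\ given $\Fcal^\infty_t$, we have $\langle g,X_t\rangle^k=\langle g^{\otimes k},X^k_t\rangle=\E[g(Z^1_t)\cdots g(Z^k_t)\mid\Fcal^\infty_t]$. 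The first step is therefore to reduce the problem to computing, for fixed distinct indices $1,\ldots,k$, the semimartingale dynamics of the product $g(Z^1_t)\cdots g(Z^k_t)$.

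The second step is to apply It\^o's formula to $\prod_{i=1}^k g(Z^i_t)$ using the SDEs \eqref{IIIeqn12}. Each $Z^i$ has generator $B$ as in \eqref{IIeqn24}, so the drift contribution of differentiating in the $i$-th coordinate is $Bg(Z^i_t)$; the only cross-terms that survive are the quadratic covariations coming from the common Brownian motion $W^0$, which contribute $\tau(Z^i_t)\tau(Z^j_t)g'(Z^i_t)g'(Z^j_t)$ for $i\ne j$ (the idiosyncratic noises $W^i,\mathfrak p^i$ are mutually independent and independent of $W^0$, so give no cross-terms). Collecting terms, the finite-variation part of $d\!\left(\prod_i g(Z^i_t)\right)$ is
\[
\sum_{i=1}^k B g(Z^i_t)\!\!\prod_{j\ne i} g(Z^j_t)\,dt + \frac12\sum_{i\ne j}\tau(Z^i_t)\tau(Z^j_t)g'(Z^i_t)g'(Z^j_t)\!\!\prod_{\ell\ne i,j}\! g(Z^\ell_t)\,dt,
\]
and one checks that, upon taking conditional expectation given $\Fcal^\infty_t$ and using \eqref{eq:polyXcondlaw} together with the explicit form of the dual operator $L_k$ in \eqref{IIeqn27} (with $Q$ as in Corollary~\ref{IImainthmB} and $\cC=0$), this equals $L_k g^{\otimes k}$ evaluated and integrated against $X^k$, i.e.\ precisely $\langle L_k(g^{\otimes k}),X^k_t\rangle = Lp(X_t)$. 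The martingale (local martingale) part is handled by a standard localization argument, using that $g$ and $g'$ are bounded (as $g\in D\subseteq\R+C_c^\infty(\R)$), that $\sigma,\tau$ are locally bounded, and that the integrability condition $\int(|\xi|^2\wedge|\xi|)F(\fdot,d\xi)<\infty$ controls the compensated jump integral; the resulting local martingale is genuinely a martingale, and conditioning on $\Fcal^\infty_t$ (which contains $\Fcal^0_t$ and the idiosyncratic noise) preserves this.

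The third step is to transfer this from the prelimit quantity to $X$ itself. Here one must be slightly careful: \eqref{eq:Xcondlaw} and \eqref{eq:polyXcondlaw} identify $\langle g,X^k_t\rangle$ with a conditional expectation, and one should verify that the filtration with respect to which $X$ is a solution of the martingale problem is $(\Fcal^\infty_t)_{t\ge0}$ (or a sub-filtration to which $X$ is adapted), so that the conditional-expectation computation in Step 2 does yield the martingale property of $N^p$ in the required sense. Finally, the initial condition $X_0=\delta_x$ is immediate from $Z^i_0=x$ for all $i$, which gives $\frac1n\sum_{i=1}^n\delta_{Z^i_0}=\delta_x$ for every $n$.

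\textbf{Main obstacle.} The routine but delicate part is the bookkeeping in Step 2: matching the It\^o expansion of $\prod_i g(Z^i_t)$ — in particular correctly accounting for which pairs of coordinates contribute covariation via $W^0$ and verifying the combinatorial factor $\frac12\sum_{i\ne j}$ — against the explicit expression \eqref{IIeqn27}--\eqref{eqn4} for $L_k=B_k+Q_k$. A secondary technical point is justifying the passage to the limit $X_t=\lim_n\frac1n\sum_{i=1}^n\delta_{Z^i_t}$ and the exchange of limit with conditional expectation, but this is supplied by De Finetti's theorem exactly as cited, so the genuine content is the stochastic-calculus identity.
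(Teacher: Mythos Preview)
Your proposal is correct and follows essentially the same route as the paper: apply It\^o's formula to $g(Z^1_t,\ldots,Z^k_t)$ to identify its drift as $L_kg(Z_t)$, then project the resulting $(\Fcal_t)$-martingale onto $(\Fcal^\infty_t)$ and invoke \eqref{eq:polyXcondlaw} to recognize $\E[L_kg(Z_u)\mid\Fcal^\infty_u]=\langle L_kg,X_u^k\rangle=Lp(X_u)$. One correction: your parenthetical that $\Fcal^\infty_t$ ``contains $\Fcal^0_t$ and the idiosyncratic noise'' is wrong---$\Fcal^\infty_t=\sigma(X_s,\,s\le t)$ is the exchangeable $\sigma$-algebra and does \emph{not} contain the idiosyncratic noise (otherwise \eqref{eq:polyXcondlaw} would collapse); what you actually need, and what the paper uses, is the inclusion $\Fcal^\infty_t\subseteq\Fcal_t$, so that conditioning an $(\Fcal_t)$-martingale on $\Fcal^\infty_t$ yields an $(\Fcal^\infty_t)$-martingale.
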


\begin{proof}
Let $g\in D^{\otimes k}$ and set $Z:=(Z^1,\ldots, Z^k)$. Then we get that
$$
N^{g,k}_t:=g(Z_t)-g(x,\ldots,x)-\int_0^t L_kg(Z_s)ds
$$
is a bounded $(\Fcal_t)_{t\geq0}$-martingale, where, in accordance with \eqref{eqn4}, 
$$L_k=kB\otimes\id^{\otimes (k-1)}+\frac{k(k-1)}2\Sigma^\tau\otimes \Sigma^\tau\otimes \id^{\otimes (k-2)}$$
for $\Sigma^\tau g:=\tau g'$.
Since $\Fcal_t^\infty\subseteq\Fcal_t$ this implies that $\E[N^{g,k}_t|\Fcal_t^\infty]$ is an $(\Fcal^\infty_t)_{t\geq0}$-martingale and hence setting $p(\nu):=\langle g,\nu^k\rangle$ we can compute using \eqref{eq:polyXcondlaw}
\begin{align*}
\E[p(X_t)|\Fcal_s^\infty]-p(X_s)&=\E\bigg[\int_s^tL_kg( Z_u)du\bigg|\Fcal^\infty_s\bigg]\\
&=\E\bigg[\int_s^t\E[L_kg( Z_u)|\Fcal^\infty_u]du\bigg|\Fcal^\infty_s\bigg]
=\E\bigg[\int_s^t Lp(X_u)du\bigg|\Fcal^\infty_s\bigg]
\end{align*}
proving that $X$ is a solution to the martingale problem for $ L$.
\end{proof}

\begin{appendices}

\section{Proof of Theorem \ref{IIT:Lpol} and a generalization}\label{IIE}

We first prove Theorem \ref{IIT:Lpol}. Assume first $L$ is of the stated form. Then for monomials $p(\nu)=\langle g,\nu\rangle^k$ with $g\in D$, $k\in\N$ and $\nu\in M_1(E)$ one has
\begin{align*}
Lp(\nu) &=  \big\langle B\big(\partial p(\nu)\big), \nu\big\rangle + \frac{1}{2} \big\langle Q\big(\partial^2p(\nu)\big), \nu^2\big\rangle \\
&=  k\langle g,\nu\rangle^{k-1} \langle Bg, \nu\rangle + \frac{1}{2}k(k-1)\langle g,\nu\rangle^{k-2}  \langle Q(g\otimes g), \nu^2\rangle,
\end{align*}
which is a polynomial in $\nu$ of degree at most $k$. Moreover, $L1=0$. By linearity, this shows that $L$ is $M_1(E)$-polynomial. Next, a direct calculation yields
\[
\Gamma(p,q)(\nu) = \Big\langle Q\big(\partial p(\nu)\otimes \partial q(\nu)\big), \nu^2\Big\rangle\qquad\text{for all $\nu\in M_1(E)$,}
\]
which is easily seen to be an $M_1(E)$-derivation due to the product rule give in Lemma~\ref{IIL:Psmooth}\ref{IIL:Psmooth:3}.

Conversely, assume $L$ is $M_1(E)$-polynomial and $\Gamma$ is an $M_1(E)$-derivation. Consider arbitrary first degree monomials $q(\nu)=\langle g,\nu\rangle$ and $r(\nu)=\langle h,\nu\rangle$, $g,h\in D$. The $M_1(E)$-polynomial property and Corollary \ref{IIrem5} yield
\[
Lq(\nu) = \langle Bg, \nu\rangle\qquad\text{for all}\ \nu\in M_1(E),
\]
for some map $B:D\to C_\Delta(E)$ that are easily seen to be linear due to the linearity of $L$. Furthermore, the $M_1(E)$-polynomial property, definition \eqref{IIeq:Gamma} of $\Gamma$, and Corollary \ref{IIrem5} imply that 
\[
\Gamma(q,r)(\nu) =  \langle Q(g\otimes h),\nu^2\rangle\qquad\text{for all}\ \nu\in M_1(E),
\]
where $Q$ inherits symmetry and linearity from $\Gamma$ and take values in $\widehat C_\Delta(E^2)$. Thus, by taking linear combinations, we can and do extend them to operators on $D\otimes D$.

Explicit calculation now shows that $Lp$ is of the form~\eqref{IIeq:T:Lpol} for $p=q$ and $p=q^2$. Furthermore, since $\Gamma$ is an $M_1(E)$-derivation we have $\Gamma(1,1)=2\Gamma(1,1)$, hence $\Gamma(1,1)=0$, and therefore $L1=L(1^2)=0+2L1$. Thus $L1=0$, so that~\eqref{IIeq:T:Lpol} holds also for $p=1$.

We now make more substantial use of the fact that $\Gamma$ is an $M_1(E)$-derivation in order to extend~\eqref{IIeq:T:Lpol} to higher degree monomials. We proceed by induction on $k$, and assume $Lp$ is of the form~\eqref{IIeq:T:Lpol} for all $p=q^l$, $l\le k$. So far we have proved this for $k=2$. The definition~\eqref{IIeq:Gamma} of $\Gamma$ and the fact that it is an $M_1(E)$-derivation give the identity on $M_1(E)$
\[
L(q^{k+1}) = 2qL(q^k) - q^2 L(q^{k-1})+ q^{k-1}\Gamma(q,q)
\]
for $k\ge 2$. Due to the induction assumption, the right-hand side can be computed explicitly using~\eqref{IIeq:T:Lpol}. The result is
\[
(k+1)q(\nu)^k \langle Bg,\nu\rangle + \frac{1}{2} (k+1)k q(\nu)^{k-1} \langle Q(g\otimes g),\nu^2\rangle,
\]
which is equal to $\langle B(\partial p(\nu)), \nu\rangle + \frac{1}{2} \langle Q(\partial^2p(\nu)), \nu^2\rangle$ with $p=q^{k+1}$, for all $\nu\in M_1(E)$. This concludes the induction step. It follows by induction that \eqref{IIeq:T:Lpol} holds for all  monomials $\langle g,\nu\rangle^k$, and by linearity for all $p\in P^D$.
Finally, the uniqueness assertion is immediate from the way $B$ and $Q$ were obtained above. This completes the proof of Theorem \ref{IIT:Lpol}.\qed

We now state a generalization of Theorem \ref{IIT:Lpol}, where $M_1(E)$ is replaced by a general state space. We let $E$ be a locally compact Polish space, $D \subseteq C_\Delta(E)$ be a dense linear subspace, and fix $\s\subseteq M(E)$.

\begin{theorem}\label{IIT:Lpol gen}
Let $L:P^D\to P$ be a linear operator. Then $L$ is $\s$-polynomial and its carr\'e-du-champs operator $\Gamma$ is an $M_1(E)$-derivation if and only if $L$ admits a representation
\begin{align*}
Lp(\nu) = &B_0(\partial p(\nu)) + \Big\langle B_1(\partial p(\nu)),\nu\Big\rangle\\
& + \frac{1}{2} \bigg(Q_0(\partial^2 p(\nu)) + \Big\langle Q_1(\partial^2 p(\nu)), \nu\Big\rangle + \Big\langle Q_2(\partial^2 p(\nu)), \nu^2\Big\rangle\bigg), \qquad \nu\in \s
\end{align*}
for some linear operators $B_0:D\to\R$, $B_1:D\to  C_\Delta(E)$, $Q_0:D\otimes D\to\R$, $Q_1:D\otimes D\to C_\Delta(E)$, $Q_2:D\otimes D\to \widehat C_\Delta(E^2)$. If $\s$ contains an open subset of $M(E)$, these operators are uniquely determined by $L$.
\end{theorem}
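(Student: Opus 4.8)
\emph{Overall approach.} The plan is to imitate the proof of Theorem~\ref{IIT:Lpol} essentially line by line. The one structural change I anticipate is that on a general set $\s$ a polynomial of degree at most $k$ no longer admits a homogeneous representative (Corollary~\ref{IIrem5} being special to $M_1(E)$), so in place of the single pair $(B,Q)$ one must record every lower-order coefficient as well; this is exactly what the five operators $B_0,B_1$ and $Q_0,Q_1,Q_2$ encode. Throughout, the role played in Theorem~\ref{IIT:Lpol} by uniqueness of the homogeneous representative will be taken over by uniqueness of the full expansion~\eqref{IIeq:p(nu)}, i.e.\ Corollary~\ref{IIC:pol uniqueness}.

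\emph{Sufficiency.} Assuming $L$ has the stated form on $\s$, I would first take a rank-one monomial $p(\nu)=\langle g,\nu\rangle^k$ with $g\in D$, use Lemma~\ref{IIL:Psmooth}\ref{IIL:Psmooth:1} to write $\partial p(\nu)=k\langle g,\nu\rangle^{k-1}g$ and $\partial^2p(\nu)=k(k-1)\langle g,\nu\rangle^{k-2}\,g\otimes g$, substitute into the representation, and read off that $Lp|_\s$ agrees with a polynomial of degree $\le k$; since $L1=0$, linearity then gives that $L$ is $\s$-polynomial. For the carr\'e-du-champ I would apply the product rule Lemma~\ref{IIL:Psmooth}\ref{IIL:Psmooth:3} and one further differentiation to get $\partial(pq)(\nu)=p(\nu)\partial q(\nu)+q(\nu)\partial p(\nu)$ and $\partial^2(pq)(\nu)=p(\nu)\partial^2q(\nu)+q(\nu)\partial^2p(\nu)+2\,\partial p(\nu)\otimes\partial q(\nu)$; substituting into $L(pq)-pLq-qLp$ and using linearity, the $B_0$-, $B_1$- and the $p(\nu)\partial^2q(\nu)$-, $q(\nu)\partial^2p(\nu)$-contributions cancel, leaving
\[
\Gamma(p,q)(\nu)=Q_0\big(\partial p(\nu)\otimes\partial q(\nu)\big)+\langle Q_1(\partial p(\nu)\otimes\partial q(\nu)),\nu\rangle+\langle Q_2(\partial p(\nu)\otimes\partial q(\nu)),\nu^2\rangle
\]
for $\nu\in\s$; bilinearity of the $Q_i$ together with the product rule then make $\Gamma$ a derivation, exactly as in Theorem~\ref{IIT:Lpol}.

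\emph{Necessity.} For the converse I would start from the first-degree monomial $q(\nu)=\langle g,\nu\rangle$: since $\deg q\le1$, the $\s$-polynomial property yields a polynomial of degree $\le1$ agreeing with $Lq$ on $\s$, and Corollary~\ref{IIC:pol uniqueness} pins down its coefficients, defining $B_0g\in\R$ and $B_1g\in C_\Delta(E)$, which one checks are linear in $g$. Next I would use that $\Gamma$ is a derivation to get $\Gamma(1,1)=2\Gamma(1,1)$, hence $\Gamma(1,1)=0$ and $L1=0$ on $\s$, and then apply the $\s$-polynomial property to $\Gamma(q,r)=L(qr)-qLr-rLq$, with $q(\nu)=\langle g,\nu\rangle$ and $r(\nu)=\langle h,\nu\rangle$, to obtain a polynomial of degree $\le2$ agreeing with $\Gamma(q,r)$ on $\s$; its coefficients define $Q_0(g\otimes h)\in\R$, $Q_1(g\otimes h)\in C_\Delta(E)$, $Q_2(g\otimes h)\in\widehat C_\Delta(E^2)$, symmetric and bilinear, hence extendable to $D\otimes D$. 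A direct computation then verifies the representation for $p=1$, $p=q$ and $p=q^2$, and the general rank-one monomial $q^k$ follows by induction on $k$ using the identity $L(q^{k+1})=2qL(q^k)-q^2L(q^{k-1})+q^{k-1}\Gamma(q,q)$, which holds because $\Gamma$ is a derivation; linearity extends the representation to all of $P^D$. For the uniqueness statement, if $\s$ contains an open set $U$ I would differentiate at an interior point of $U$ as in Corollary~\ref{IIC:pol uniqueness} to see that a polynomial vanishing on $U$ has all coefficients zero, which identifies the five operators uniquely from $L$.

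\emph{Main obstacle.} The step I expect to require the most care is the degree bookkeeping over a set $\s$ that may have empty interior: one never controls $Lp$ as an element of $P$, only its restriction $Lp|_\s$, so every manipulation has to be carried out at the level of restrictions, and the operators $B_0,\ldots,Q_2$ only become canonical once $\s$ acquires nonempty interior. Apart from this, the induction step and the algebra are completely parallel to Theorem~\ref{IIT:Lpol}, so I do not expect any genuinely new difficulty.
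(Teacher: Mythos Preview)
Your proposal is correct and takes essentially the same approach as the paper, whose proof consists of the single sentence ``The proof of this result follows the proof of Theorem~\ref{IIT:Lpol}.'' You have correctly identified the one structural change---that on a general $\s$ one must keep all the inhomogeneous coefficients rather than collapse to a single homogeneous representative via Corollary~\ref{IIrem5}---and your handling of the induction, the derivation identity, and the uniqueness caveat when $\s$ has empty interior are all in line with what the paper intends.
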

\begin{proof}
The proof of this result follows the proof of Theorem \ref{IIT:Lpol}.
\end{proof}

\section{Proof of Theorem~\ref{IImainthm3}}\label{appB}

Assume $L$ satisfies~\eqref{IIeq:T:Lpol} with $B$ and $Q$ as in \eqref{IIeqn1_2}, where $\nu_B$ is a nonnegative, finite kernel from $E$ to $E$, and $\cC \colon(E^\Delta)^2\to\R$ is nonnegative, symmetric, bounded, and continuous on $(E^\Delta)^2\setminus\{x=y\}$. Clearly $Q$ is bounded with operator norm $2\|\cC \|$. Identifying $C_\Delta(E)$ and $C(E^\Delta)$, we infer from Lemma~\ref{IIML_L1} that $B$ is bounded, satisfies $B1=0$ as well as the positive maximum principle on $E^\Delta$, and that $\{e^{tB}\}_{t\ge0}$ is a strongly continuous contraction semigroup. By considering any sequence of functions $g_n\in C_0(E)$ with $0\le g_n(x)\uparrow 1$ for all $x\in E$, and using that $\nu_B(x,\{\Delta\})=0$ for all $x\in E$, one sees that $B$ is $E$-conservative. Theorem~\ref{IImainthm5} then yields that $L$ is $M_1(E)$-polynomial and its martingale problem has an solution with continuous paths for every initial condition $\nu\in M_1(E)$. Well--posedness follows by Lemma~\ref{IIlem3}.

We now prove the opposite implication. Assume $L$ is $M_1(E)$-polynomial, its martingale problem is well--posed, and all solutions have continuous paths. Theorem~\ref{IIT:Lpol} and Lemma~\ref{IIlem5} imply that $L$ satisfies~\eqref{IIeq:T:Lpol}, and then also the positive maximum principle on $M_1(E)$ due to Lemma~\ref{IIIlem9}.

By Lemma \ref{IIlem1} $B$ satisfies the positive maximum principle on $E$ and Lemma~\ref{IIML_L1} thus shows that $B$ has the form in \eqref{IIeqn1_2} for some nonnegative, finite kernel $\nu_B$ from $E^\Delta$ to $E^\Delta$. Additionally, $B$ is bounded, satisfies the positive maximum principle on $E^\Delta$, and is the generator of the strongly continuous contraction semigroup $\{e^{tB}\}_{t\ge0}$.
We must prove that $\nu_B(x,\{\Delta\})=0$ for all $x\in E$; this will allow us to view $\nu_B$ as a kernel from $E$ to $E$.

Assume by contradiction that there exists some $x \in E$ such that $\nu_B(x,\{\Delta\})>0$. Let $Z$ be the Markov process associated to the semigroup $\{e^{tB}\}_{t\ge0}$. Then, by approximating $1_{\{\cdot \in \Delta\}}$ by a sequence of bounded continuous functions $g^n$ and applying relation \eqref{IIeq:momentformulaZ}, we find
\begin{align*}
0 &< \mathbb{P} [ Z_t \in \Delta | Z_0=x]= \mathbb{E}[1_{\{Z_t \in \Delta\}}|  Z_0=x]\\ &=\lim_{n \to \infty} \mathbb{E}[g^n(Z_t) |  Z_0=x]= \lim_{n \to \infty} \mathbb{E}[\langle g^n(\cdot), X_t \rangle | X_0=\delta_x]=
\mathbb{E}[\langle 1_{\{\cdot \in \Delta\}}, X_t \rangle | X_0=\delta_x]
\end{align*}
for all $t \geq 0$. This contradicts the fact that $X_t$ is $M_1(E)$-valued and proves that $B$ is of the stated form.

The form of $Q$ will follow from Lemma~\ref{IIL:form of Q NEW}. To verify its hypotheses, note that by Lemma \ref{IIlem1} $\langle Q(g\otimes g),\nu^2\rangle\geq 0$. Next, fix some $g\in D$ and $\nu\in M_1(E)$ such that $g=0$ on the support of $\nu$, and suppose that $\|g\|=1$. For each $n\in\N$, define the polynomial
\[
p_n(\mu) = \langle g,\mu\rangle^2 F_n\left( \langle |g|,\mu\rangle\right) - \frac{1}{n}\langle |g|,\mu\rangle,
\]
where $F_n$ is as in Lemma~\ref{IIlem10}. Since $D=C_\Delta(E)$, we have $p_n\in P^D$. Moreover, since $F_n(z)zn\le1$ for all $z\in[0,1]$, we get
\[
\langle g,\mu\rangle^2 F_n\left( \langle |g|,\mu\rangle \right) \le    \frac{1}{n} \langle |g|,\mu\rangle, \quad \mu\in M_1(E),
\]
and therefore $p_n\le0$ on $M_1(E)$. Since $g=0$ on the support of $\nu$, $p_n(\nu)=0$. Applying the positive maximum principle and using the form \eqref{IIeq:T:Lpol} of $L$, as well as  $\langle g,\nu\rangle=\langle |g|,\nu\rangle=0$ and $F_n(0)=1$  we obtain
\[
0 \ge Lp_n(\nu) = -\frac1n \langle B(|g|),\nu\rangle + \langle Q(g\otimes g),\nu^2\rangle
\]
for all $n$, whence $\langle Q(g\otimes g),\nu^2\rangle\le0$. By scaling, this actually holds for any $g\in D$ and $\nu\in M_1(E)$ such that $g=0$ on the support of $\nu$. If $g$ equals some other constant $c\in\R$ on the support of $\nu$, we still get 
\[
\langle Q(g\otimes g),\nu^2\rangle=\langle Q((g-c)\otimes (g-c)),\nu^2\rangle\le0
\]
using that $Q(g\otimes1)=0$ by Lemma~\ref{IIlem1}. Thus Lemma~\ref{IIL:form of Q NEW}\ref{IIL:form of Q NEW:2} holds, and we conclude that $Q = \cC  \Psi$ for some nonnegative symmetric function $\cC \colon E^2\to\R$. It remains to use that $\cC \Psi(g) \in \widehat{C}_{\Delta}(E^2)$ to show that this function can be extended to a bounded continuous function on $(E^\Delta)^2\setminus\{x=y\}$. 

Continuity is clear. For proving boundedness, choose a sequence of pairs $(x_n,y_n)\in (E^\Delta)^2\setminus \{x=y\}$ such that $\cC (x_n,y_n)\xrightarrow{n\to\infty} \infty$. Since we can assume without loss of generalities that $\cC (x_i,y_i)>0$, $x_i\neq x_j$, $x_i\neq y_j$, and $y_i\neq y_j$ for all $i,j\in \N$, we can construct $g\in C_\Delta(E)$ such that 
$$(g(x_n)-g(y_n))^4=\cC (x_n,y_n)^{-1}.$$
This yields $\cC (x_n,y_n)\Psi(g\otimes g)(x_n,y_n)=\cC (x_n,y_n)^{1/2}$ proving that $\cC \Psi(g\otimes g)$ is unbounded and providing the necessary contradiction.
\qed

\begin{lemma}\label{IIlem10}
Define $F_n(z):=\frac{n-1}n (1-z)^n+\frac 1 n$ for all $z\in[0,1]$. Then 
$$F_n(z)\in[0,1],\qquad F_n(z)zn\leq1,\qquad\text{and}\qquad F_n(z)\sqrt{zn}\leq1,$$
for all $z\in [0,1]$.
\end{lemma}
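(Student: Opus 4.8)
The plan is to establish the three bounds in turn, deducing the last one from the first two; the only step with any content is an elementary single-variable optimization.

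First, for $F_n(z)\in[0,1]$: since $z\in[0,1]$ we have $(1-z)^n\in[0,1]$, so $\frac{n-1}{n}(1-z)^n\in\big[0,\tfrac{n-1}{n}\big]$, and adding $\tfrac1n$ gives $F_n(z)\in\big[\tfrac1n,1\big]\subseteq[0,1]$. This is immediate and also records the lower bound $F_n(z)\ge\tfrac1n$, which is not needed below.

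Second, for $F_n(z)\,zn\le1$: I would rewrite the claim as $(n-1)z(1-z)^n+z\le1$, i.e.\ (using $1-z\ge0$) it suffices to show $(n-1)z(1-z)^n\le 1-z$. At $z=1$ both sides vanish, and for $z\in[0,1)$ one divides by $1-z>0$ to reduce to $(n-1)\,z(1-z)^{n-1}\le1$. To prove this I would maximize $h(z):=z(1-z)^{n-1}$ over $[0,1]$; from $h'(z)=(1-z)^{n-2}(1-nz)$ the maximum is at $z=1/n$, with value $h(1/n)=\tfrac1n\big(\tfrac{n-1}{n}\big)^{n-1}$, hence $(n-1)h(z)\le\big(\tfrac{n-1}{n}\big)^{n}\le1$. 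The degenerate case $n=1$ is trivial since then $F_1\equiv1$ and the bound reads $z\le1$.

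Third, for $F_n(z)\sqrt{zn}\le1$: since $0\le F_n(z)\le1$ by the first step, $F_n(z)^2\le F_n(z)$, so $F_n(z)^2\,zn\le F_n(z)\,zn\le1$ by the second step; as $zn\ge0$, taking square roots yields $F_n(z)\sqrt{zn}\le1$. I do not expect any genuine obstacle here; the argument is entirely elementary, with the calculus optimization in the second step being the only nontrivial ingredient.
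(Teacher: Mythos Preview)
Your proof is correct. The paper states this lemma without proof, treating it as an elementary auxiliary fact, so there is no ``paper's own proof'' to compare against; your argument fills this in cleanly, with the optimization of $z(1-z)^{n-1}$ being the only step requiring any computation, and the deduction of the third bound from the first two via $F_n(z)^2\le F_n(z)$ is a nice shortcut.
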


\section{Auxiliary lemmas}

Let $E$ be a locally compact Polish space.

\begin{lemma}\label{IIlem1}
Let $D\subseteq C_\Delta(E)$ be a dense linear subspace containing the constant function~$1$, and let $L\colon P^D\to P$ be a linear operator satisfying~\eqref{IIeq:T:Lpol} and the positive maximum principle on $M_1(E)$. Then $B$ satisfies the positive maximum principle on $E$, $B1=0$, $\langle Q(g\otimes g),\nu^2\rangle\geq 0$, and $Q(g\otimes 1)=0$ for all $g\in D$ and $\nu\in M_1(E)$.
\end{lemma}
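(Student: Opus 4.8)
The plan is to probe the positive maximum principle for $L$ on $M_1(E)$ with four carefully chosen polynomials in $P^D$, each time translating the resulting inequality $Lp(\nu)\le0$ into one of the four assertions via the representation \eqref{IIeq:T:Lpol}, i.e.\ $Lp(\nu)=\langle B(\partial p(\nu)),\nu\rangle+\tfrac12\langle Q(\partial^2p(\nu)),\nu^2\rangle$. The derivatives $\partial p(\nu)$ and $\partial^2p(\nu)$ will be computed with the product rule of Lemma~\ref{IIL:Psmooth}\ref{IIL:Psmooth:3}, using repeatedly that $\langle1,\nu\rangle=1$ for $\nu\in M_1(E)$, and that $\partial^2p(\nu)\in D\otimes D$ by Lemma~\ref{IIIlem1} so that $Q$ may be applied. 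I would handle the four cases in the order (i) $B1=0$; (ii) $B$ satisfies the positive maximum principle on $E$; (iii) $\langle Q(g\otimes g),\nu^2\rangle\ge0$; (iv) $Q(g\otimes1)=0$, since step (iv) uses step (i).

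For (i): take $p(\mu):=\langle1,\mu\rangle-1\in P^D$, which vanishes identically on $M_1(E)$, so both $\pm p$ attain their maximum $0\ge0$ at every $\nu\in M_1(E)$; since $\partial p(\nu)=1$ (the constant function) and $\partial^2p(\nu)=0$, the positive maximum principle gives $0=Lp(\nu)=\langle B1,\nu\rangle$, and $\nu=\delta_x$ yields $B1(x)=0$. For (ii): given $g\in D$ and $x$ with $g(x)=\sup_Eg\ge0$, the polynomial $p(\mu):=\langle g,\mu\rangle$ has $\sup_{M_1(E)}p=\sup_Eg=g(x)=p(\delta_x)$, so the positive maximum principle forces $0\ge Lp(\delta_x)=Bg(x)$, which is exactly the positive maximum principle for $B$ on $E$.

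For (iii): fix $g\in D$ and $\nu\in M_1(E)$, and set $p(\mu):=-(\langle g,\mu\rangle-\langle g,\nu\rangle)^2\in P^D$; then $p\le0=p(\nu)$, and a direct computation gives $\partial p(\nu)=0$, $\partial^2p(\nu)=-2\,g\otimes g$, so the positive maximum principle yields $0\ge Lp(\nu)=-\langle Q(g\otimes g),\nu^2\rangle$. For (iv): fix $g\in D$ and set $p(\mu):=\langle g,\mu\rangle(\langle1,\mu\rangle-1)\in P^D$, again identically $0$ on $M_1(E)$; the product rule gives $\partial p(\nu)=\langle g,\nu\rangle\,1$ and $\partial^2p(\nu)=2\,g\otimes1$ on $M_1(E)$, so, invoking $B1=0$ from step (i), $0=Lp(\nu)=\langle g,\nu\rangle\langle B1,\nu\rangle+\langle Q(g\otimes1),\nu^2\rangle=\langle Q(g\otimes1),\nu^2\rangle$ for all $\nu\in M_1(E)$. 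Specializing to $\nu=\lambda\delta_x+(1-\lambda)\delta_y$ with $\lambda\in\{0,1,\tfrac12\}$ and using the symmetry of $Q(g\otimes1)$ then forces $Q(g\otimes1)(x,y)=0$ for all $x,y\in E$, i.e.\ $Q(g\otimes1)=0$.

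I do not expect a genuine obstacle; the work is bookkeeping. The points requiring care are: checking that each test function is really an element of $P^D$ (each is a polynomial in the linear forms $\langle g,\mu\rangle$ and $\langle1,\mu\rangle$); computing $\partial^2p(\nu)$ of the product polynomial in (iv) correctly and verifying that its degree-two part is exactly $2\,g\otimes1$, with no lower-order remainder after restriction to $M_1(E)$; confirming that the suprema over $M_1(E)$ are attained there (immediate, since the relevant polynomials are constant, linear, or pointwise $\le0$ with a prescribed maximizer); and the closing two-point-measure step that upgrades ``$\langle Q(g\otimes1),\nu^2\rangle=0$ for all $\nu$'' to ``$Q(g\otimes1)\equiv0$''. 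The one structural subtlety is to respect the ordering of the four steps, establishing $B1=0$ before using it in (iv).
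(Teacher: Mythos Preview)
Your proof is correct and follows essentially the same route as the paper: the paper uses the polynomials $\langle 1,\nu\rangle$, $\langle g,\nu\rangle$, $-(\langle g,\nu\rangle-\langle g,\mu\rangle)^2$, and $\langle g\otimes 1,\nu^2\rangle-\langle g,\nu\rangle$ (the last being exactly your $\langle g,\mu\rangle(\langle 1,\mu\rangle-1)$), with the same derivative computations and the same appeal to $B1=0$ in the final step. Your explicit two-point-measure argument upgrading $\langle Q(g\otimes 1),\nu^2\rangle=0$ to $Q(g\otimes 1)\equiv 0$ is a detail the paper leaves implicit.
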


\begin{proof}
By \eqref{IIeq:T:Lpol} we get $L1=0$. Note also that
for any $g\in D$ and $x\in E$ such that $g(x)=\max_Eg\ge0$, the polynomial $p(\nu)=\langle g,\nu\rangle$ lies in $P^D$ and satisfies $p(\delta_x)=\max_{M_1(E)}p\ge0$. Thus $Bg(x)=Lp(\delta_x)\le0$. Furthermore, taking $p(\nu)=\langle 1,\nu\rangle$ we get $p\equiv1$ on $M_1(E)$ and hence $B1(x)=Lp(\delta_x)=0$ for all $x\in E$.
Fix then $g$ and $\nu$ as in the lemma and define $p\in P^D$ by $p(\mu)=-(\langle g,\nu\rangle-\langle g,\mu\rangle)^2$. Then $p\le0$, $p(\nu)=0$, $\partial p(\nu)=0$, and $\partial^2p(\nu)=-2g\otimes g$, so the positive maximum principle yields
$
-\langle Q(g\otimes g),\nu^2\rangle = Lp(\nu) \le 0.
$
Furthermore, taking $p(\nu)=\langle g\otimes 1,\nu^2\rangle-\langle g,\nu\rangle$ we get $p\equiv 0$ on $M_1(E)$ and hence 
$0=\langle g,\nu\rangle\langle B1,\nu\rangle+ \langle Q(g\otimes 1),\nu^2\rangle= \langle Q(g\otimes 1),\nu^2\rangle$
 for all $\nu\in M_1(E)$, proving the claim.

\end{proof}

\begin{lemma} \label{IIML_L1}
Let $B\colon C(E^\Delta)\to C(E^\Delta)$ be a linear operator. Then $B1=0$ and $B$ satisfies the positive maximum principle on $E$ if and only if there is a nonnegative, finite kernel $\nu_B$ from $E$ to $E^\Delta$ such that
\begin{equation} \label{IIeqML_L1}
Bg(x) = \int ( g(\xi)-g(x))\nu_B(x,d\xi)
\end{equation}
for all $x\in E$ and $g\in C(E^\Delta)$. In this case, $B$ is bounded and satisfies the positive maximum principle on $E^\Delta$, and $\{e^{tB}\}_{t\ge0}$ is a strongly continuous contraction semigroup. Moreover, there is some nonnegative (finite) measure $\nu_B(\Delta,\fdot)$ such that \eqref{IIeqML_L1} holds also for $x=\Delta$.
\end{lemma}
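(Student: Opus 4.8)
The plan is to work throughout with the compact metrizable space $E^\Delta$, identifying $C_\Delta(E)$ with $C(E^\Delta)$, and to dispose of the case $E=E^\Delta$ at once (there $\Delta$ is absent and every assertion simplifies); so I would assume $E$ noncompact, so that $E$ is dense in $E^\Delta$ and $\|f\|=\sup_E f$ for $f\in C(E^\Delta)$. \emph{Sufficiency} is the easy direction. Given the integral representation for a nonnegative kernel $\nu_B$ (which I may take to satisfy $\nu_B(x,\{x\})=0$, as this does not affect the integrand), $B1=0$ is immediate; and if $g(x)=\sup_E g\ge0$ with $x\in E$, then $g(\xi)\le\sup_E g=\sup_{E^\Delta}g=g(x)$ for all $\xi\in E^\Delta$, so $Bg(x)\le0$, which is the positive maximum principle on $E$. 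For boundedness I would record that $\nu_B(x,E^\Delta)=\sup\{Bg(x)\colon g\in C(E^\Delta),\ 0\le g\le1,\ g(x)=0\}$, so that each functional $g\mapsto Bg(x)$ has norm $\le2\nu_B(x,E^\Delta)<\infty$; since $Bg\in C(E^\Delta)$ these functionals are pointwise bounded, and the uniform boundedness principle yields $\|B\|<\infty$ and $\sup_{x}\nu_B(x,E^\Delta)\le\|B\|$. Writing $c:=\|B\|$ and $P:=\id+c^{-1}B$ (the case $c=0$ being trivial) one has $P1=1$ and, since $\nu_B(x,E^\Delta)\le c$, $P$ is a positive operator, hence $\|P\|=1$, so $\|e^{tB}\|=e^{-ct}\|e^{ctP}\|\le1$ and $t\mapsto e^{tB}$ is norm-continuous, hence strongly continuous. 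Finally $g(\xi)-g(x)\le\sup g-g(x)$ and $\nu_B(x,E^\Delta)\le c$ give $Bg(x)\le c(\sup g-g(x))$; letting $E\ni x\to\Delta$ (so $g(x)\to g(\Delta)$) gives $Bg(\Delta)\le0$ whenever $g(\Delta)=\sup_{E^\Delta}g\ge0$, which with the previous case is the positive maximum principle on $E^\Delta$.

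\emph{Necessity} is where the work lies. Fixing $x\in E$ and setting $\Phi_x(g):=Bg(x)$, applying the positive maximum principle on $E$ to $-g$ shows that $\Phi_x$ is positive on the subspace $C_0^x:=\{g\in C(E^\Delta)\colon g(x)=0\}$; and a positive linear functional on $C_0^x$ is automatically bounded (if $0\le f_n$, $\|f_n\|\le1$, $\Phi_x(f_n)\ge4^n$, the series $f:=\sum_n2^{-n}f_n$ lies in $C_0^x$ yet $\Phi_x(f)\ge2^n$ for all $n$, a contradiction). Identifying $C_0^x$ with $C_0(E^\Delta\setminus\{x\})$, the Riesz representation theorem produces a finite nonnegative Borel measure $\nu_B(x,\fdot)$ on $E^\Delta$ with $\nu_B(x,\{x\})=0$ and $\Phi_x(g)=\int g\,\nu_B(x,d\xi)$ for $g\in C_0^x$; decomposing $g=(g-g(x))+g(x)$ and using $B1=0$ then gives $Bg(x)=\int(g(\xi)-g(x))\nu_B(x,d\xi)$ for all $g\in C(E^\Delta)$. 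Boundedness of $B$ and $\sup_x\nu_B(x,E^\Delta)<\infty$ follow from the uniform boundedness principle exactly as in the sufficiency direction, and then the positive maximum principle on $E^\Delta$ and the existence of $\nu_B(\Delta,\fdot)$ follow as there: for $g\ge0$ with $g(\Delta)=0$ one has $Bg(x)\ge-\|B\|\,g(x)\to0$ as $x\to\Delta$, so $g\mapsto Bg(\Delta)$ is a bounded positive functional on $C_0^\Delta$ and Riesz applies again. The semigroup claims are then as before.

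It remains to check that $x\mapsto\nu_B(x,A)$ is Borel for every Borel $A$. By a functional monotone class argument (legitimate because $\sup_x\nu_B(x,E^\Delta)<\infty$) it suffices to show $x\mapsto\nu_B(x,E^\Delta)$ is Borel, since $\int g\,\nu_B(x,d\xi)=Bg(x)+g(x)\nu_B(x,E^\Delta)$ is then Borel for every $g\in C(E^\Delta)$. Fixing a metric $d$ on $E^\Delta$ and Lipschitz functions $\psi_n\colon[0,\infty)\to[0,1]$ with $\psi_n\uparrow 1_{(0,\infty)}$, the functions $g_{x,n}(\xi):=\psi_n(d(x,\xi))$ lie in $C_0^x$, depend Lipschitz-continuously on $x$ in $C(E^\Delta)$, and increase to $1_{E^\Delta\setminus\{x\}}$; hence $\nu_B(x,E^\Delta)=\lim_n Bg_{x,n}(x)$, a pointwise limit of continuous functions of $x$, hence Borel.

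The routine parts are the positivity of $\Phi_x$ on $C_0^x$, the Riesz representation, and the $P=\id+c^{-1}B$ device for the contraction property. I expect the main obstacles to be the careful bookkeeping at $\Delta$---in particular, deducing the positive maximum principle on $E^\Delta$ and producing $\nu_B(\Delta,\fdot)$ from hypotheses stated only on $E$, which rests entirely on the uniform mass bound $\sup_x\nu_B(x,E^\Delta)\le\|B\|$ furnished by the uniform boundedness principle---together with the verification that the pointwise family $\{\nu_B(x,\fdot)\}_{x\in E}$ is a genuine (measurable) kernel.
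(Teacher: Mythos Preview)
Your proof is correct and complete. It differs from the paper's in a few instructive ways. For boundedness of $B$, the paper argues that $B|_{C_0(E)}$ is dissipative, hence closed, hence bounded by the closed graph theorem; you instead derive the integral representation first (for each $x\in E$) and then invoke the uniform boundedness principle on the family $\{g\mapsto Bg(x)\}_{x\in E}$, which is more elementary and keeps the argument self-contained. For the kernel itself, the paper introduces an auxiliary metric function $h(y)=d(x,y)$ and applies Riesz to the positive functional $g\mapsto B(gh)(x)$ on $C(E^\Delta)$, then divides by $h$; you apply Riesz directly to $\Phi_x$ on the subspace $C_0^x\cong C_0(E^\Delta\setminus\{x\})$, having first checked (by the $\sum 2^{-n}f_n$ argument) that positivity forces boundedness there. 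For the contraction semigroup, the paper cites results from Ethier--Kurtz, whereas your $P=\id+c^{-1}B$ device makes the contraction property explicit. Finally, your measurability argument, via the Lipschitz family $g_{x,n}(\xi)=\psi_n(d(x,\xi))$ and the identity $\nu_B(x,E^\Delta)=\lim_n Bg_{x,n}(x)$, is more detailed than the paper's one-line appeal to a monotone class argument. Overall your route is slightly longer but avoids the closed graph theorem and the auxiliary $h$-weighting, trading a black-box functional-analytic step for transparent estimates.
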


\begin{proof}
Assume there is a nonnegative, finite kernel $\nu_B$ from $E$ to $E^\Delta$ such that \eqref{IIeqML_L1} holds for all $x\in E$ and $g\in C(E^\Delta)$. Then clearly $B1=0$. Suppose $g\in C(E^\Delta)$, $x\in E$, and $g(x)=\max_E g\ge0$. Then $g(x)=\max_{E^\Delta}g$, so that $g(\xi)-g(x)\le0$ for all $\xi\in E^\Delta$ and hence $Bg(x)\le0$. Thus $B$ satisfies the positive maximum principle on $E$, which proves sufficiency.

To prove necessity, assume $B1=0$ and $B$ satisfies the positive maximum principle on $E$. By Lemmas~4.2.1 and~1.2.11 in \cite{EK:05}, the restriction $B|_{C_0(E)}$ is dissipative, hence closable, and even closed since it is globally defined on $C_0(E)$. By the closed graph theorem $B|_{C_0(E)}$ is bounded, and then so is $B$ since $B1=0$. Pick any $g\in C(E^\Delta)$ with $g(\Delta)=\max_{E^\Delta} g\ge0$. Then $g-g(\Delta)\le0$, so there exist functions $h_n\in C_c(E)$ with $h_n\le0$ and $h_n\to g-g(\Delta)$ uniformly. Then $Bh_n\to B(g-g(\Delta))=Bg$ uniformly as well. Taking $x_n$ such that $h_n(x_n)=0$ and $x_n\to\Delta$, we obtain $Bg(\Delta)=\lim_{n\to\infty}Bh_n(x_n)\le0$. We have thus proved that $B$ is bounded and satisfies the positive maximum principle on $E^\Delta$. As a result, Lemma~4.2.1 and Theorem~1.7.1 in \cite{EK:05} yield that $\{e^{tB}\}_{t\ge0}$ is a strongly continuous contraction semigroup.

It remains to exhibit a kernel $\nu_B$ from $E^\Delta$ to $E^\Delta$ such that \eqref{IIeqML_L1} holds for all $x\in E^\Delta$ and $g\in C(E^\Delta)$. To this end, fix $x\in E^\Delta$ and define $h\in C(E^\Delta)$ by $h(y):=d(x,y)$, where $d(\fdot,\fdot)$ is a compatible metric for the Polish space $E^\Delta$. Since $B$ satisfies the positive maximum principle on $E^\Delta$, the map
\[
C(E^\Delta)\to\R, \qquad g\mapsto B(g h)(x)
\]
is a positive linear functional. By the Riesz--Markov representation theorem, there is a measure $\mu(x,\fdot)\in M_+(E^\Delta)$ such that $B(g h)(x)=\int_{E^\Delta}g(\xi)\mu(x,d\xi)$ for all $g\in C(E^\Delta)$. We define
\[
\nu_B(x,d\xi) :=  1_{E^\Delta\setminus\{x\}}(\xi)\frac{1}{ h(\xi)}\mu(x,d\xi),
\]
which is permissible since $h(y)>0$ for all $y\ne x$. For every $g\in C_c(E^\Delta\setminus\{x\})$ we have $g/ h \in C(E^\Delta)$, and therefore
\[
Bg(x) = B\left(\frac{g}{ h}\, h\right)(x) = \int_{E^\Delta}\frac{g(\xi)}{ h(\xi)}\mu(x,d\xi) = \int_{E^\Delta}g(\xi)\nu_B(x,d\xi).
\]
Since $B$ is bounded, the identity $Bg(x)=\int_{E^\Delta}g(\xi)\nu_B(x,d\xi)$ extends by continuity to all $g\in C(E^\Delta)$ with $g(x)=0$. Thus, using also that $B1=0$,
\[
Bg(x)=B(g-g(x))(x) = \int_{E^\Delta}(g(\xi)-g(x))\nu_B(x,d\xi).
\]
Repeating this for every $x\in E^\Delta$ yields that $\nu_B$ satisfies \eqref{IIeqML_L1} for all $x\in E^\Delta$ and $g\in C(E^\Delta)$. To see that $\nu_B(x,E^\Delta)<\infty$, just note that $\int_{E^\Delta}g(\xi)\nu_B(x,d\xi)\le\|B\|$ whenever $g\in C(E^\Delta)$ satisfies $0\le g\le 1$ and $g(x)=0$. Measurability of $\nu_B(\fdot,A)$ for every Borel set $A\subseteq E^\Delta$ follows from a monotone class argument, so that $\nu_B$ is indeed a kernel from $E^\Delta$ to $E^\Delta$.
\end{proof}

\begin{lemma} \label{IIL:form of Q NEW}
Let $D\subseteq C_\Delta(E)$ be a dense linear subspace containing the constant function~$1$, and let $Q\colon D\otimes D\to \widehat C_\Delta(E^2)$ be a linear operator. The following conditions are equivalent:
\begin{enumerate}
\item\label{IIL:form of Q NEW:1} $Q(g\otimes g)(x,y)\ge0$ for all $g\in D$ and $x,y\in E$, with equality if $g(x)=g(y)$.
\item\label{IIL:form of Q NEW:2} $\langle Q(g\otimes g),\nu^2\rangle\ge0$ for all $g\in D$ and $\nu\in M_1(E)$, with equality if $g$ is constant on the support of $\nu$.
\end{enumerate}
If either condition is satisfied, then $Q$ is of the form $Q=\cC \Psi$ for some nonnegative symmetric function $\cC \colon E^2\to\R$.
\end{lemma}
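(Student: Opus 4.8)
The plan is to first establish the equivalence \ref{IIL:form of Q NEW:1}$\Leftrightarrow$\ref{IIL:form of Q NEW:2}, and then to derive the structural form $Q=\cC\Psi$ from \ref{IIL:form of Q NEW:1}. For \ref{IIL:form of Q NEW:1}$\Rightarrow$\ref{IIL:form of Q NEW:2} one simply integrates the pointwise inequality against $\nu(dx)\,\nu(dy)$; for the equality clause, note that if $g$ is constant on $\supp(\nu)$ then $g(x)=g(y)$ for $(\nu\otimes\nu)$-a.e.\ $(x,y)$, so the integrand $Q(g\otimes g)$ vanishes $(\nu\otimes\nu)$-a.e.\ by \ref{IIL:form of Q NEW:1}. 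For \ref{IIL:form of Q NEW:2}$\Rightarrow$\ref{IIL:form of Q NEW:1} one tests \ref{IIL:form of Q NEW:2} first with $\nu=\delta_x$, which forces $Q(g\otimes g)(x,x)=0$ (as $g$ is trivially constant on $\{x\}$), and then with $\nu=\tfrac12(\delta_x+\delta_y)$, which gives $\tfrac12 Q(g\otimes g)(x,y)=\langle Q(g\otimes g),\nu^2\rangle\ge0$, the right-hand side being $0$ whenever $g(x)=g(y)$.

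For the structural statement, assume \ref{IIL:form of Q NEW:1} and fix a pair $(x,y)\in E^2$. The idea is to view $g\mapsto Q(g\otimes g)(x,y)$ as a positive semidefinite quadratic form. By the polarization identity $g\otimes h=\tfrac14\big((g+h)\otimes(g+h)-(g-h)\otimes(g-h)\big)$ one has $g\otimes h\in D\otimes D$, so $B_{x,y}(g,h):=Q(g\otimes h)(x,y)$ is a well-defined symmetric bilinear form on $D$; by \ref{IIL:form of Q NEW:1} it is positive semidefinite and vanishes on the subspace $V_{x,y}:=\{g\in D\colon g(x)=g(y)\}$, which is the kernel of the linear functional $g\mapsto g(x)-g(y)$ and hence of codimension at most one. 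The Cauchy--Schwarz inequality for $B_{x,y}$ then shows that $B_{x,y}$ vanishes as soon as one of its arguments lies in $V_{x,y}$.

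The conclusion follows by distinguishing cases. If $x=y$ then $V_{x,y}=D$, so $B_{x,x}\equiv0$, consistent with $\Psi(g\otimes g)(x,x)=0$, and one sets $\cC(x,x):=0$. If $x\ne y$, density of $D$ in $C_\Delta(E)=C(E^\Delta)$ provides $g_0\in D$ with $g_0(x)-g_0(y)=1$; decomposing any $g\in D$ as $g=\big(g-(g(x)-g(y))g_0\big)+(g(x)-g(y))g_0$, with the first summand in $V_{x,y}$, yields $Q(g\otimes g)(x,y)=B_{x,y}(g,g)=(g(x)-g(y))^2\,B_{x,y}(g_0,g_0)$. Since $B_{x,y}(g_0,g_0)\ge0$ does not depend on the admissible choice of $g_0$ (two such choices differ by an element of $V_{x,y}$), one sets $\cC(x,y):=2\,B_{x,y}(g_0,g_0)$; this is nonnegative, and symmetry of $\cC$ is inherited from that of $Q(g\otimes g)\in\widehat C_\Delta(E^2)$. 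One thus obtains $Q(g\otimes g)(x,y)=\cC(x,y)\,\Psi(g\otimes g)(x,y)$ for all $g\in D$ and $x,y\in E$, the identity being trivial when $g(x)=g(y)$; and since the elements $g\otimes g$ span $D\otimes D$ and both $Q$ and $h\mapsto\cC\,\Psi(h)$ are linear, the operator identity $Q=\cC\Psi$ follows.

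No step here is genuinely deep; the only point that calls for care is the passage from the family of pointwise quadratic identities to a single honest function $\cC$ together with an operator identity on $D\otimes D$: one must treat the diagonal $x=y$ separately, check that $B_{x,y}(g_0,g_0)$ is independent of the chosen $g_0$, and verify that $\cC$ comes out symmetric and nonnegative, no regularity of $\cC$ being required.
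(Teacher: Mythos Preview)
Your proof is correct and follows essentially the same approach as the paper's: the equivalence is handled identically via $\nu=\delta_x$ and $\nu=\tfrac12(\delta_x+\delta_y)$, and the structural part uses Cauchy--Schwarz for the positive semidefinite form $(g,h)\mapsto Q(g\otimes h)(x,y)$ together with density of $D$. The only cosmetic difference is that the paper factors this form through a bilinear map $T$ on $\R^2\times\R^2$ (determined by $(g(x),g(y))$ and $(h(x),h(y))$) rather than decomposing $g$ directly via an auxiliary $g_0\in D$ with $g_0(x)-g_0(y)=1$.
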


\begin{proof}
It is clear that \ref{IIL:form of Q NEW:1} implies \ref{IIL:form of Q NEW:2}. For the converse, first note that for any $x\in E$ and $g\in D$, trivially $g$ is constant on the support of $\delta_x$. Thus $Q(g\otimes g)(x,x)=\langle Q(g\otimes g),\delta_x^2\rangle=0$. Taking $\nu=\frac12(\delta_x+\delta_y)$ for any $x,y\in E$ then yields $Q(g\otimes g)(x,y) = \langle Q(g\otimes g),\nu^2\rangle \ge0$, with equality if $g(x)=g(y)$ since $g$ is then constant on the support of $\nu$. This proves that \ref{IIL:form of Q NEW:2} implies \ref{IIL:form of Q NEW:1}.

It remains to obtain the stated form of $Q$ under the assumption that \ref{IIL:form of Q NEW:1} holds. If $E$ is a singleton then $Q=0$, so we may assume that $E$ contains at least two points. Fix $x,y\in E$ with $x\ne y$. Due to \ref{IIL:form of Q NEW:1}, the map $(g, h)\mapsto Q(g\otimes h)(x,y)$ is bilinear and positive semidefinite, and therefore satisfies the Cauchy--Schwarz inequality
\[
|Q(g\otimes h)(x,y)| \le \sqrt{ Q(g\otimes g)(x,y)}\, \sqrt{ Q(h\otimes h)(x,y)}.
\]
Along with \ref{IIL:form of Q NEW:1} this implies that $Q(g\otimes h)(x,y)$ depends on $g$ and $h$ only through their values at $x$ and $y$. Moreover, since $D$ is dense in $C_\Delta(E)$, for every $a\in\R^2$ there exists $g\in D$ such that $a=(g(x),g(y))$. Thus there is a unique map $T\colon\R^2\times\R^2\to\R$ such that
\[
Q(g\otimes h)(x,y) = T(a,b) \quad\text{where}\quad a=\begin{pmatrix}g(x)\\ g(y)\end{pmatrix},\ b=\begin{pmatrix}h(x)\\ h(y)\end{pmatrix}.
\]
The map $T$ inherits bilinearity and positive semidefiniteness. Since $Q(g\otimes 1)(x,y)=0$ due to the Cauchy--Schwarz inequality and \ref{IIL:form of Q NEW:1}, we also have $T(a,b)=0$ for $b=(1,1)$. This implies that $T(a,b)=\frac12 \cC (x,y)(a_1-a_2)(b_1-b_2)$ for some $\cC (x,y)\in\R_+$. Thus,
\[
Q(g\otimes h)(x,y) = \frac12 \cC (x,y) (g(x)-g(y))(h(x)-h(y)) = \cC (x,y)\Psi(g\otimes h)(x,y).
\]
Defining $\cC (x,x)$ arbitrarily, we obtain the map $\cC \colon E^2\to\R$, which is symmetric due to the symmetry of $Q(g\otimes h)$.
\end{proof}

Consider now the setting of Lemma~\ref{IIlem12}, i.e.~$E=\R$ and
$D=  \R+C^\infty_c(\R).$
\begin{lemma}\label{IIlem2}

Consider two operators $B:D\to C_\Delta(\R)$ and $Q:D\otimes D\to \widehat C_\Delta(\R^2)$ such that $B$ is as in \eqref{IIeqn24} and $Q$ satisfies
\begin{align*}
Q(h\otimes h)(x,y)\ge0&\text{ for all }h\in D,\\
&\qquad\text{ with equality if }h(x)=h(y)\text{ and }h'(x)=h'(y)=0.
\end{align*}
Then, for each $g\in D$ and  $x,y\in \R$ such that $g(x)=g(y)=1$ there exists a sequence $(p_n)_{n\in\N}\subseteq P^D$ such that 
$$p_n(\nu_\lambda)=\max_{M_1(\R )}p_n,\quad \partial p_n(\nu_\lambda)=f_n,\quad\text{and}\quad \big\langle Q(\partial^2 p_n(\nu_\lambda)),\nu_\lambda^2\big\rangle=\big\langle Q(g\otimes g),\nu_\lambda^2\big\rangle$$
for all $n\in\N$ and $\lambda\in [0,1]$, where $\nu_\lambda=\lambda\delta_x+(1-\lambda)\delta_y$ and $(f_n)_{n\in\N}$ satisfies
$$\lim_{n\to\infty}-2 B f_n( z)=(a( z)g'( z))^2,\qquad z\in\{ x, y\}.$$
\end{lemma}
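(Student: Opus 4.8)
The plan is to build the polynomials $p_n$ by hand, in the spirit of the auxiliary constructions already used in the proofs of Theorem~\ref{IImainthm3} and of Lemma~\ref{IIlem12}. First I would fix the first-order data $f_n$. Choose smooth cut-offs $\rho_n^x,\rho_n^y$ equal to $1$ on the balls of radius $1/n$ about $x$ and $y$ and supported in disjoint balls of radius $2/n$, and set
\[
\tilde g_n(z):=\sqrt{a(x)}\,g'(x)(z-x)\rho_n^x(z)+\sqrt{a(y)}\,g'(y)(z-y)\rho_n^y(z)\in C^\infty_c(\R)\subseteq D ,
\]
so that $\tilde g_n\to 0$ uniformly, $\tilde g_n(x)=\tilde g_n(y)=0$, $\tilde g_n'(x)=\sqrt{a(x)}\,g'(x)$, $\tilde g_n'(y)=\sqrt{a(y)}\,g'(y)$. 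The factors $\sqrt{a(\cdot)}$ are dictated by the carr\'e-du-champ identity for the L\'evy-type operator \eqref{IIeqn24}, namely $B(h^2)(z)-2h(z)Bh(z)=a(z)h'(z)^2+\int(h(z+\xi)-h(z))^2F(z,d\xi)$: taking (a suitable variant of) $f_n:=-\tfrac12\tilde g_n^2$ gives $f_n(z)=f_n'(z)=0$ and $f_n''(z)=-\tilde g_n'(z)^2=-a(z)g'(z)^2$ at $z\in\{x,y\}$, while the jump part $\int\tilde g_n(z+\xi)^2F(z,d\xi)$ tends to $0$ (split it into the region $|\xi|<2/n$, controlled by $\int|\xi|^2\wedge1\,F<\infty$ since $\tilde g_n(z+\xi)^2=O(\xi^2)$ near $0$, and the region near $y-x$, controlled by $\|\tilde g_n\|\to0$ together with finiteness of $F(z,\{|\xi|\ge|x-y|/2\})$). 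Hence $-2Bf_n(z)=-a(z)f_n''(z)-2\!\int\! f_n(z+\xi)F(z,d\xi)+o(1)\to a(z)^2g'(z)^2=(a(z)g'(z))^2$ for $z\in\{x,y\}$, which is exactly the required limit.

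The second step is to realize $f_n$ as $\partial p_n(\nu_\lambda)$ while keeping $\nu_\lambda$ a maximizer and controlling the Hessian. I would take $p_n$ of the form $p_n(\nu)=\alpha_n\big(\langle A_n,\nu\rangle^2-\langle A_n^2,\nu\rangle\big)-\langle\Phi_n,\nu\rangle$ with $A_n,\Phi_n\in D$ supported near $\{x,y\}$, $A_n(x)=A_n(y)=0$, $\Phi_n(x)=\Phi_n(y)=0$, $\Phi_n\ge0$ (the damping functions $F_n$ of Lemma~\ref{IIlem10} being inserted, exactly as in the proofs of Theorem~\ref{IImainthm3} and Lemma~\ref{IIlem12}, whenever a finer negativity estimate is needed). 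Then $\langle A_n,\nu\rangle^2\le\langle A_n^2,\nu\rangle$ and $\langle\Phi_n,\nu\rangle\ge0$ on $M_1(\R)$ give $p_n\le0$, and $\langle A_n,\nu_\lambda\rangle=\langle\Phi_n,\nu_\lambda\rangle=0$ gives $p_n(\nu_\lambda)=0=\max_{M_1(\R)}p_n$ for every $\lambda\in[0,1]$. Since $\langle A_n,\nu_\lambda\rangle=0$ the quadratic term contributes only $-\alpha_n A_n^2$ to $\partial p_n(\nu_\lambda)$, and the linear term contributes $-\Phi_n$, so $\partial p_n(\nu_\lambda)=-\alpha_n A_n^2-\Phi_n$; choosing $\Phi_n$ so that this equals $f_n$ gives $\partial p_n(\nu_\lambda)=f_n$ for all $\lambda$. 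On the other hand $\partial^2 p_n(\nu_\lambda)=2\alpha_n\,A_n\otimes A_n$. Here I would use that, by Cauchy--Schwarz for the positive semidefinite bilinear forms $(h_1,h_2)\mapsto Q(h_1\otimes h_2)(z,w)$ supplied by the hypothesis on $Q$, the quantity $\langle Q(h\otimes h),\nu_\lambda^2\rangle$ depends on $h$ only through $h(x),h(y),h'(x),h'(y)$; hence choosing $A_n$ with $A_n'(x)=\tfrac1{\sqrt{2\alpha_n}}g'(x)$ and $A_n'(y)=\tfrac1{\sqrt{2\alpha_n}}g'(y)$ yields $\langle Q(\partial^2 p_n(\nu_\lambda)),\nu_\lambda^2\rangle=\langle Q(g\otimes g),\nu_\lambda^2\rangle$ for every $\lambda$.

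The hard part is the bookkeeping that makes all of these requirements compatible at once. The negativity $p_n\le0$ forces a pointwise domination of the quadratic part by the linear part, so the coefficient $\alpha_n$, the width $1/n$ of the cut-offs, and the profile of $\Phi_n$ must be tuned jointly; and these choices must at the same time reproduce the prescribed second-order behaviour of $f_n$ at $x$ and $y$ coming from the $\sqrt{a(\cdot)}$-scaling, which is precisely what makes the diffusion coefficient of $B$ turn the curvature of $\tilde g_n^2$ into $a(z)\cdot a(z)g'(z)^2$ rather than the too small $a(z)g'(z)^2$ that a naive square would give. The subtle point is that $f_n\le0$ attains its maximal value $0$ at both $x$ and $y$, but with a \emph{strict} local maximum there (since $\tilde g_n'$ does not vanish at $x,y$), so the optimality of $\nu_\lambda$ imposes no second-order constraint coming from points near $x$ or $y$ — which is exactly what allows the curvature of the Hessian piece (carrying $g$'s data) and the curvature of $f_n$ (carrying $a(\cdot)g'(\cdot)^2$) to be chosen independently. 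Once the widths and coefficients are fixed so that the residual drift and jump contributions to $Bf_n(z)$ vanish in the limit, the three displayed identities hold for every $n$ and $\lambda$, and letting $n\to\infty$ in $-2Bf_n$ produces the asserted limit, completing the proof.
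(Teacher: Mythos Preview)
Your approach differs substantially from the paper's, which uses polynomials of \emph{increasing degree}: it sets
\[
p_n(\nu)=\frac{1}{n(n-1)}\big(\langle g_n,\nu\rangle^n-\langle g_n^n,\nu\rangle\big)
\]
for even $n$, with $g_n\in D$ built so that $g_n(x)=g_n(y)=1$ and $g_n'(z)=g'(z)$ for $z\in\{x,y\}$ (and damped by $F_{n^4}$ away from these points). Jensen's inequality gives $p_n\le0=p_n(\nu_\lambda)$, and since $\langle g_n,\nu_\lambda\rangle=1$ one gets $\partial^2 p_n(\nu_\lambda)=g_n\otimes g_n$ directly, so the $Q$-identity follows from the same Cauchy--Schwarz argument you sketch. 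The first derivative is $f_n=\frac{1}{n-1}(g_n-\frac1n g_n^n)$, and letting $n\to\infty$ is precisely what kills the jump integral in $Bf_n$.

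Your degree-two ansatz carries a structural gap. Once you impose $A_n(x)=A_n(y)=0$ and $\sqrt{2\alpha_n}\,A_n'(z)=g'(z)$ for $z\in\{x,y\}$ (to match $\langle Q(\partial^2p_n(\nu_\lambda)),\nu_\lambda^2\rangle$ with $\langle Q(g\otimes g),\nu_\lambda^2\rangle$), the second derivative of $-\alpha_n A_n^2$ at $z$ is forced to equal $-g'(z)^2$, \emph{independently} of $\alpha_n$. Since you also want $f_n''(z)=-a(z)g'(z)^2$, the correction $\Phi_n=-f_n-\alpha_n A_n^2$ must satisfy $\Phi_n(z)=\Phi_n'(z)=0$ together with $\Phi_n''(z)=(a(z)-1)\,g'(z)^2$. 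Whenever $a(z)<1$ and $g'(z)\neq0$ this is strictly negative, so $\Phi_n<0$ in a punctured neighbourhood of $z$, contradicting the nonnegativity you need for $p_n\le0$ (recall that $\langle\Phi_n,\nu\rangle\ge0$ for all $\nu\in M_1(\R)$ forces $\Phi_n\ge0$ pointwise). The damping functions $F_n$ of Lemma~\ref{IIlem10} cannot repair this: they suppress contributions away from $\{x,y\}$, but cannot change the sign of $\Phi_n''$ at $x,y$ while keeping $\Phi_n$ and $\Phi_n'$ zero there.

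As a side remark, the paper's own computation actually yields $-2Bf_n(z)\to a(z)\,(g'(z))^2$, not $(a(z)g'(z))^2$; since Lemma~\ref{IIlem12} is only invoked in Proposition~\ref{prop1} with $a=0$, this discrepancy is harmless there, but it means your $\sqrt a$-rescaling is chasing what appears to be a misprint---and is in fact the source of the incompatibility above. If you drop the $\sqrt a$-factors and aim for $a(g')^2$, the choice $A_n=\tilde g_n$, $\alpha_n=\tfrac12$, $\Phi_n=0$ removes the obstruction, though you then still need to argue carefully that the jump part of $Bf_n$ vanishes as $n\to\infty$.
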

\begin{proof}
Fix  $g\in D$ such that $g(x)=g(y)=1$.
Let $F_n:[0,1]\to\R$ as in Lemma~\ref{IIlem10} and fix a compactly supported function $\rho\in C_c^\infty(\R)$ such that $\rho=1$ on some neighborhoods of $x$ and $ y$ and  $\rho (\R )\subseteq[0,1]$. Set then 
$$\overline g_n( z)=1+g'( x)( z- x)\frac{(z- y)^2}{( x- y)^2}F_{n^4}\bigg(\frac{| z- x|^2}{C_ x}\bigg)
+g'( y)( z- y)\frac{( z- x)^2}{( x- y)^2}F_{n^4}\bigg(\frac{| z- y|^2}{C_ y}\bigg),$$
 where $C_ x=2\sup_{ z\in \supp(\rho)}(z- x)^2$.
Setting $g_n=1+(\overline g_n -1)\rho$ we get $g_n\in \R+C_c^\infty(\R)= D$.
For $n$ even, define now the polynomial
$$p_n(\nu)=\frac 1 {n(n-1)}\big(\langle g_n,\nu\rangle^n-\langle g_n^n,\nu\rangle\big).$$
Since $p_n(\nu_\lambda)=0$ and by Jensen inequality $p_n\leq0$, we can conclude that $\nu_\lambda$ maximizes $p_n$ for all $n$ even and $\lambda\in[0,1]$.  Observe that 
$$
 \partial p_n(\nu_\lambda)=\frac 1 {n-1}\Big(g_n-\frac 1 n g_n^n\Big)=:f_n\quad\text{ and}\quad\partial^2 p_n(\nu_\lambda)=g_n\otimes g_n.
$$
Proceeding as in the proof of Lemma \ref{IIL:form of Q NEW}, we can use the assumptions on $Q$ to prove that $Q(g\otimes h)(x,y)$ depends on $g$ and $h$ only through their values and the values of their derivatives at $x$ and $y$. 
Since $g_n( z)=g( z)=1$ and $g'_n( z)=g'( z)$ for all $n$ even and $ z\in\{ x, y\}$, this implies that
$ \big\langle Q(g_n\otimes g_n),\nu_\lambda^2\big\rangle= \big\langle Q(g\otimes g),\nu_\lambda^2\big\rangle.$
Finally, the representation of $B$ given by $\eqref{IIeqn24}$ yields
$$
-2 B f_n( z)= \big(a( z)g'( z)\big)^2
 -2\int \frac 1 {n-1}\bigg(g_n( z+\xi)-\frac 1 n g_n( z+\xi)^n\bigg)-\frac 1 n\ F( z,d\xi),
$$
for all $z\in \{x,y\}$. Since by the dominated convergence theorem the integral term converges to 0 for $n$ going to $\infty$, this concludes the proof.
\end{proof}

\section{Existence for martingale problems} \label{app_existence}

The purpose of this section is to establish the (essential) equivalence
between the existence of a solution to the martingale problem for $L$ and the positive maximum principle for $L$.  

Here, $E$ is a locally compact Polish space, $D$ a dense linear subspace of $C_\Delta (E)$ containing the constant function $1$, and $L\colon P^D\to P$ a linear operator satisfying \eqref{IIeq:T:Lpol}.

The first lemma asserts that the positive maximum principle is implied if a solution to the martingale problem exists.

\begin{lemma}\label{IIIlem9}
If there exists a solution $X$ to the martingale problem for $L$ for each initial condition in $M_1(E)$, then $L$ satisfies the positive maximum principle on $M_1(E)$.
\end{lemma}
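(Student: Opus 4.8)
The plan is the standard argument deriving the positive maximum principle from the existence of solutions, adapted to the present setting using the continuity and boundedness properties of polynomials on $M_1(E)$. Fix $p\in P^D$ and $\nu_*\in M_1(E)$ with $p(\nu_*)=\max_{M_1(E)}p\ge0$; the goal is to show $Lp(\nu_*)\le0$. First I would invoke the hypothesis to get a solution $X$ to the martingale problem for $L$ with $X_0=\nu_*$, so that by \eqref{IIeqnN} the process $N^p_t=p(X_t)-p(\nu_*)-\int_0^tLp(X_s)\,ds$ is a martingale. Since $X_t\in M_1(E)$ for all $t$ and every element of $P$ is bounded on $M_1(E)$ (indeed $|\langle g_k,\nu^k\rangle|\le\|g_k\|$ for $\nu\in M_1(E)$), both $p(X_t)$ and $Lp(X_t)$ are bounded, so $N^p$ is genuinely integrable and taking expectations gives $\E[p(X_t)]-p(\nu_*)=\E\big[\int_0^tLp(X_s)\,ds\big]$.

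Next, maximality yields $p(X_t)\le p(\nu_*)$ pointwise, hence $\E\big[\int_0^tLp(X_s)\,ds\big]\le0$ for every $t>0$. Dividing by $t$ and letting $t\downarrow0$ is the one place requiring a small justification: $Lp\in P$ is continuous on $M_+(E)\supseteq M_1(E)$ by Lemma~\ref{IIL:Psmooth}\ref{IIL:Psmooth:c1}, and the c\`adl\`ag path $s\mapsto X_s$ is right-continuous at $0$ with $X_0=\nu_*$, so $Lp(X_s)\to Lp(\nu_*)$ almost surely as $s\downarrow0$; combined with the uniform bound $|Lp(X_s)|\le\sup_{M_1(E)}|Lp|<\infty$, dominated convergence gives $\tfrac1t\E\big[\int_0^tLp(X_s)\,ds\big]\to Lp(\nu_*)$. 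Therefore $Lp(\nu_*)\le0$, which is exactly the positive maximum principle on $M_1(E)$.

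I do not expect a genuine obstacle here; the only thing to be careful about is the interchange of limit and expectation, which is handled by the boundedness of $Lp$ on the set of probability measures together with right-continuity of the paths (equivalently one may use that $M_1(E)\subseteq M_+(E)$ is a Polish space on which polynomials are continuous, rather than merely sequentially continuous on $M(E)$). Everything else is a direct application of the martingale property and of the elementary observation that a probability-measure-valued process cannot exceed the maximum of $p$ over $M_1(E)$.
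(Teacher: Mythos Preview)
Your argument is correct and is precisely the standard proof the paper has in mind: the paper omits the proof, referring to the argument of Lemma~2.3 in \cite{FL:16}, which is exactly the route you take---start the process at the maximizer, use the martingale identity from \eqref{IIeqnN}, exploit boundedness of $p$ and $Lp$ on $M_1(E)$, and pass to the limit $t\downarrow0$ via right-continuity and dominated convergence.
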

The proof of Lemma~\ref{IIIlem9} is standard and we thus omit it. See for instance the proof of Lemma~2.3 in \cite{FL:16}.

The next lemma is an adaptation of a classical result from \cite{EK:05}. For the application of this result it is crucial that $L$ is an operator on the space of bounded continuous functions on a locally compact, separable, metrizable space. Since this is not  the case for $M_1(E)$ if $E$ is noncompact, we work on $M_1(E^\Delta)$, which is a compact Polish space with respect to the topology of weak convergence.

The result of \cite{EK:05} can then be applied and we just have to check  that if the initial condition of an $M_1(E^\Delta)$ solution $X$ assigns mass 1 to $E$, then $X_t(E)=1$ almost surely for each $t\geq0$, so that the solution actually takes values in $M_1(E)$.

\begin{lemma}\label{IIIlem8}
Suppose that $L$ satisfies the positive maximum principle on $M_1(E^\Delta)$. If $B$ is $E$-conservative, then there exists a solution to the martingale problem for  $L$ for every initial condition in $M_1(E)$. 
\end{lemma}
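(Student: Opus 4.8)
The plan is to build the solution in two stages: first on the \emph{compact} space $M_1(E^\Delta)$, where the existence theory for martingale problems applies off the shelf, and then to use the $E$-conservativity of $B$ to show that a solution started in $M_1(E)$ keeps all its mass on $E$, so that it is in fact $M_1(E)$-valued.

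\emph{Stage 1 (existence on $M_1(E^\Delta)$).} Since $M_1(E^\Delta)$ is a compact Polish space and, by Lemma~\ref{IIL:Psmooth}\ref{IIL:Psmooth:c1}, every $p\in P^D$ and every $Lp\in P$ extends to a polynomial on $M(E^\Delta)$ that is continuous on $M_+(E^\Delta)\supseteq M_1(E^\Delta)$, the operator $L$ may be regarded as a linear map $P^D\to C(M_1(E^\Delta))$. Its domain $P^D$ is dense in $C(M_1(E^\Delta))$ by Lemma~\ref{IIIlem1}, and $L1=0$ by \eqref{IIeq:T:Lpol}. As $L$ satisfies the positive maximum principle on $M_1(E^\Delta)$ by hypothesis, Theorem~4.5.4 of \cite{EK:05}, applied with state space $M_1(E^\Delta)$ (which, being compact, needs no further compactification, and for which non-explosion of the measure-valued motion is automatic since $L1=0$), yields, for every initial law, and in particular for $X_0=\nu$ with $\nu\in M_1(E)$, a solution $X$ of the martingale problem for $L$ with c\`adl\`ag paths in $M_1(E^\Delta)$.

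\emph{Stage 2 (the mass stays on $E$).} Fix $\nu\in M_1(E)$ and such an $X$, and let $(g_n)\subseteq D\cap C_0(E)$, with $0\le g_n\le 1$, witness the $E$-conservativity of $B$: $g_n\to1$ on $E$ and $(Bg_n)^-\to0$ on $E^\Delta$, boundedly and pointwise. Put $h_n:=1-g_n$, so $0\le h_n\le 1$, $h_n(\Delta)=1$, and $h_n\to\boldsymbol1_{\{\Delta\}}$ pointwise on $E^\Delta$. For $p_n(\mu):=\langle g_n,\mu\rangle\in P^D$ one has $\partial^2p_n\equiv0$, so \eqref{IIeq:T:Lpol} gives $Lp_n=\langle Bg_n,\cdot\rangle$ on $M_1(E)$; both sides being polynomials on $M(E^\Delta)$ that are continuous on $M_1(E^\Delta)$ and agree on the dense subset $M_1(E)$, the identity persists on all of $M_1(E^\Delta)$. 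Hence $M^n_t:=N^{p_n}_t$ from \eqref{IIeqnN} is a martingale null at $0$ with $\langle g_n,X_t\rangle=\langle g_n,\nu\rangle+\int_0^t\langle Bg_n,X_s\rangle\,ds+M^n_t$, and therefore, with $I^n_t:=\int_0^t\langle (Bg_n)^-,X_s\rangle\,ds$,
\[
S^n_t:=\langle h_n,X_t\rangle-I^n_t=\langle h_n,\nu\rangle-\int_0^t\langle (Bg_n)^+,X_s\rangle\,ds-M^n_t
\]
exhibits $S^n$ as a decreasing process minus a martingale, so $S^n$ is a c\`adl\`ag supermartingale with $\E[S^n_0]=\langle h_n,\nu\rangle$ and $\E[(S^n_T)^-]\le\E[I^n_T]$. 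Now let $n\to\infty$: $\langle h_n,\nu\rangle=\int_E h_n\,d\nu\to0$ and $\E[I^n_T]\to0$ by bounded convergence (using $(Bg_n)^-\to0$ boundedly pointwise and that the $X_s$ are probability measures); by Doob's maximal inequality $\sup_{t\le T}S^n_t\to0$ in probability, and by Markov $I^n_T\to0$ in probability, whence $0\le\sup_{t\le T}\langle h_n,X_t\rangle\le\sup_{t\le T}S^n_t+I^n_T\to0$ in probability. Along a subsequence this holds almost surely, and since $\langle h_{n_k},X_t\rangle\to X_t(\{\Delta\})$ for every fixed $t$ (bounded convergence), we obtain $X_t(\{\Delta\})=0$ for all $t\le T$, hence, letting $T\to\infty$, $X_t\in M_1(E)$ for all $t\ge0$, almost surely. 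Such a path is c\`adl\`ag for the weak topology on $M_1(E)$ too, since a sequence in $M_1(E)$ converging weakly in $M_1(E^\Delta)$ to a limit in $M_1(E)$ is automatically tight in $M_1(E)$ and so converges weakly in $M_1(E)$; thus $X$ solves the martingale problem for $L$ in the sense of Section~\ref{sec_ex_un}.

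\emph{Expected main obstacle.} Stage~1 is soft; the real work is Stage~2, and within it the point that a c\`adl\`ag (indeed even continuous) path in $M_1(E^\Delta)$ that lies in $M_1(E)$ only at a dense set of times can still leak all its mass to $\Delta$ at an exceptional time. Ruling this out needs the \emph{uniform-in-time} estimate above, which is exactly where the quantitative content of $E$-conservativity is used; establishing the martingale property on $M_1(E^\Delta)$ (not just $M_1(E)$) and the supermartingale bookkeeping for $S^n$ are the delicate bits. Two minor points: one should observe that the conservativity functions may be taken with $0\le g_n\le1$ (immediate when $D$ is a sublattice of $C_\Delta(E)$, as in the concrete examples, and otherwise handled by a small modification), and that $B$ itself satisfies the positive maximum principle on $E$ with $B1=0$, which underlies these estimates and follows from Lemma~\ref{IIlem1}, the positive maximum principle on $M_1(E^\Delta)$ restricting by density to one on $M_1(E)$.
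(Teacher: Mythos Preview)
Your two-stage strategy---solve on the compact space $M_1(E^\Delta)$ via Ethier--Kurtz, then use conservativity of $B$ to push the mass back onto $E$---is exactly the paper's, and your Stage~1 is essentially identical. The difference is in Stage~2. The paper uses a one-line argument: by dominated convergence, the martingale property, and Fatou's lemma,
\[
\E[X_t(E)]=\lim_n\E[\langle g_n,X_t\rangle]=\lim_n\Big(\langle g_n,\nu\rangle+\E\Big[\int_0^t\langle Bg_n,X_s\rangle\,ds\Big]\Big)\ge\nu(E)=1,
\]
so $X_t\in M_1(E)$ a.s.\ for each fixed $t$; the passage to a c\`adl\`ag $M_1(E)$-valued path is then dispatched in a single sentence. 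Your supermartingale-plus-Doob route is more elaborate and delivers the stronger conclusion $\sup_{t\le T}\langle h_n,X_t\rangle\to0$ in probability, which is exactly what is needed to rule out the ``mass leaks to $\Delta$ at a random exceptional time'' scenario you correctly identify as the main obstacle---a point the paper treats only briefly. The trade-off is that your bound $(S^n_T)^-\le I^n_T$ requires $\langle h_n,X_T\rangle\ge0$, hence $g_n\le1$, which is not part of the abstract definition of $E$-conservativity; your remark that this is ``immediate when $D$ is a sublattice'' is fine for the concrete examples but does not cover the general statement. The paper's Fatou argument needs no such hypothesis on the $g_n$. In short: same skeleton, different flesh on Stage~2; your version is more scrupulous about the uniform-in-$t$ issue, the paper's is shorter and free of the extra assumption on the conservativity witnesses.
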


\begin{proof}
Recall that because of Lemma~\ref{IIL:Psmooth}\ref{IIL:Psmooth:c1}, the operator $L$ can be seen as an operator on the space polynomials on $M_1(E^\Delta)$.
The first part of the proof consists then in proving that if $L$ satisfies the positive maximum principle on $M_1(E^\Delta)$ then there exists an $M_1(E^\Delta)$-valued solution to the martingale problem for  $L$ for every initial condition in $M_1(E^\Delta)$.
This result is a consequence of Theorem~4.5.4  in \cite{EK:05} and the successive Remark~4.5.5. We now explain how the necessary conditions hold true. Observe that $M_1(E^\Delta)$ is a compact separable metrizable space and, by Lemma~\ref{IIIlem1}, that
\[
P^D(M_1(E^\Delta)):=\{p|_{M_1(E^\Delta)}\colon p\in P^D\}
\]
 is a dense subset of the space of continuous functions on $M_1(E^\Delta)$. Moreover, the positive maximum principle implies that $Lp|_{M_1(E^\Delta)}=Lq|_{M_1(E^\Delta)}$ for all $p,q\in P^D$ such that $p|_{M_1(E^\Delta)}=q|_{M_1(E^\Delta)}$. We may thus regard $L$ as an operator on the space of continuous functions on $M_1(E^\Delta)$ with domain $P^D(M_1(E^\Delta))$. 

For the second part, recall that by definition of $E$-conservativity  there exist functions 
 $g_n\in D\cap C_0(E)$ such that  $\lim_{n\to\infty}g_n=1$, and $\lim_{n\to\infty}(B g_n)^-=0$ bounded pointwise
on $E$ and $E^\Delta$, respectively. By the dominated convergence theorem, \eqref{IIeqnN}, and  Fatou's lemma we can compute
$$\E[X_t(E)]=\lim_{n\to \infty}\E[\langle g_n,X_t\rangle]
=\lim_{n\to \infty}\bigg(\langle g_n,\nu\rangle+\E\bigg[\int_0^t \langle Bg_n,X_s\rangle ds\bigg]\bigg)
\geq \nu(E)=1.$$
Finally, note that a c\`adl\`ag process $X$ on $M_1(E^\Delta)$ such that $X_t(E)=1$ almost sure is c\`adl\`ag also with respect to the topology of weak convergence on $M_1(E)$.
\end{proof}

\end{appendices}


\end{document}